\newtheorem{thm}{Theorem}[section]
\newtheorem{cor}[thm]{Corollary}
\newtheorem{lem}[thm]{Lemma}
\newtheorem{prop}[thm]{Proposition}
\newtheorem{claim}[thm]{Claim}
\newtheorem{defn}[thm]{Definition}
\theoremstyle{remark}
\newtheorem{rem}[thm]{Remark}
\numberwithin{equation}{section}
\theoremstyle{definition}
\newcommand{\R}{\mathbb R}
\newcommand{\bu}{\bar u}
\newcommand{\tv}{v_\infty}
\newcommand{\e}{\epsilon}
\newcommand{\dds}{ \Delta_{S^2}}
\newcommand{\ds}{\nabla_{S^2}}
\newcommand{\al}{\alpha}
\newcommand{\la}{\lambda}
\newcommand{\ins}{\int_{S^2}}
\newcommand{\diam}{\mathrm{diam}}
\newcommand{\dist}{\mathrm{dist}}
\newcommand{\injrad}{\mathrm{injrad}}
\newcommand{\csch}{\mathrm{csch}}
\newcommand{\ar}{\mathrm{area}}
\newcommand{\bbu}{\bar u}
\newcommand{\bbbu}{\bar {\bar u}}
\newcommand{\buk}{\bar u_k}
\newcommand{\bv}{\bar v}
\def\XXint#1#2#3{{\setbox0=\hbox{$#1{#2#3}{\intr}$}
     \vcenter{\hbox{$#2#3$}}\kern-.5\wd0}}
\begin{document}

\title[Ancient Compact Solutions to the Ricci Flow on Surfaces]
{Classification of  Ancient Compact solutions \\ to the Ricci flow on Surfaces}

\author[P. Daskalopoulos]
{Panagiota Daskalopoulos$^*$}

\address{Department of
Mathematics, Columbia University, New York,
 USA}
\email{pdaskalo@math.columbia.edu}

\author[R. Hamilton]
{Richard Hamilton}

\address{Department of
Mathematics, Columbia University, New York,
 USA}
\email{hamilton@math.columbia.edu}

\author[N. Sesum]
{Natasa Sesum$^{**}$}
\address{Department of Mathematics, Columbia University, New York,
USA}
\email{natasas@math.columbia.edu}

\thanks{$*:$ Partially supported
by NSF grant 0604657}
\thanks{$**:$ Partially supported by NSF grant 0905749}
\maketitle 

\begin{abstract}
We consider an ancient solution $g(\cdot,t)$ of the Ricci flow  on a compact surface
that exists  for $t\in (-\infty,T)$ and becomes spherical at time  $t=T$. 
We prove that   the metric  $g(\cdot,t)$ is either a family of
contracting spheres,    which is a type I ancient solution,  or a    King-Rosenau solution,  which is  a type II ancient
solution.   
\end{abstract}

\section {Introduction}

We consider an ancient solution of the Ricci flow
\begin{equation}
\label{eqn-ricci}
\frac{\partial g_{ij}}{\partial t} = -2 \, R_{ij}
\end{equation}
on a compact two-dimensional surface   that exists for time   $t\in (-\infty,T)$
and becomes singular at  $t=T$,  for some $T < \infty$. In two dimensions we have   $R_{ij} = \frac 12 R\, g_{ij}$, where $R$ is
the scalar curvature of the surface. Moreover, on an ancient 
non-flat solution we have $R >0$.  
It is  well known (\cite{Ch}, \cite{Ha1}) that the surface also becomes extinct  at  $T$ and
it becomes spherical, which means that after a normalization, the
normalized flow converges to a spherical metric, to which we will refer as to the limiting sphere.

Since $R > 0$, by the Uniformization theorem and the fact that the Ricci flow in dimension two preserves the conformal class, we can parametrize the Ricci flow  by the limiting sphere at time $T$, that is, we can write
$$g(\cdot,t) = u(\cdot,t)\, g_{S^2}.$$
The spherical metric can be written as 
\begin{equation}
\label{eq-spher-met}
g_{S^2} = d\psi^2 + \cos^2\psi \, d\theta^2
\end{equation}  
where   $\psi,\theta$ denote the  global coordinates on the sphere.
An easy computation shows that (\ref{eqn-ricci}) is equivalent to 
the following evolution equation for the conformal factor $u(\cdot,t)$, namely 
\begin{equation}\label{eqn-U}
u_t = \Delta_{S^2} \log u -2\qquad \mbox{on} \,\, S^2 \times (-\infty,T)
\end{equation}
where $\Delta_{S^2}$ denotes the Laplacian on $S^2$.
Let us  recall, for future references,   that the only nonzero Christoffel
symbols for the   spherical metric (\ref{eq-spher-met}) are
$$\Gamma_{12}^2 = \Gamma_{21}^2 = -\tan\psi, \quad  \Gamma_{22}^1 = \frac{\sin 2\psi}{2}$$
where we use the indices  $1, 2$ for the $\psi, \theta$ variables respectively. 
It follows that for any function $f$ on the  sphere we have
$$\Delta_{S^2} f = f_{\psi\psi} - \tan\psi\, f_{\psi} + \sec^2\psi \,  f_{\theta\theta}$$
which,  in the case of a radially symmetric function $f=f(\psi)$,  becomes 
$$\Delta_{S^2} f = f_{\psi\psi} - \tan \psi \, f_\psi.$$

\medskip
We will assume, throughout this paper,  that $g= u\, ds_p^2 $ is an ancient solution to the
Ricci flow \eqref{eqn-U}  on the  sphere which becomes extinct  at time $T=0$.

It is natural to consider the pressure function $v = u^{-1}$ which
evolves by
\begin{equation}
v_t = v^2 \, (\dds \log v +2)\qquad \mbox{on} \,\, S^2 \times (-\infty,0)
\end{equation}
or, after expanding the laplacian of $\log v$, 
\begin{equation}\label{eqn-p1}
v_t = v \, \dds  v - |\ds v|^2 + 2 v^2\qquad \mbox{on} \,\, S^2 \times (-\infty,0). 
\end{equation}

\medskip

\begin{defn}
We will say that an ancient solution to the Ricci flow (\ref{eqn-ricci}) on
a compact surface  $M$ is of type I,  if it satisfies
$$\limsup_{t\to-\infty} \, ( |t| \, \max_M R(\cdot,t) )< \infty.$$ A solution which is not of type I,  will be called of type
II.
\end{defn}

Explicit examples of ancient solutions to the Ricci flow in two dimensions are:
\begin{enumerate}[i.]
\item
{\bf The contracting spheres}\\
They are described on $S^2$ by a pressure $v_S$ that is given by 
\begin{equation}\label{eqn-spheres}
v_S(\psi,t) = \frac{1}{2(-t)}
\end{equation}
and they are  examples of ancient type I shrinking Ricci solitons.
\item
{\bf The King-Rosenau solutions}\\ They were  discovered 
 by J.R. King (\cite{K1}, \cite{K2}) and later,  independently, by  P. Rosenau (\cite{R}).
They are described on $S^2$ by a pressure 
$v_K$ that has the form 
\begin{equation}\label{eqn-King-Rosenau}
v_K(\psi,t) = a(t) - b(t)\, \sin^2\psi
\end{equation}
with $a(t) = -\mu \, \coth( 2 \mu  t)$, $b(t) = -\mu \, \tanh(2 \mu  t)$, for some $\mu  >
0$. These  solutions are {\em  not
solitons}. We can visualize  them  as two cigars ''glued'' together
to form a compact solution to the Ricci flow. They are type II ancient
solutions.
\end{enumerate}

\medskip
Our goal in this paper is to prove the following classification result: 

\begin{thm}
\label{thm-class-anc}
Let $g=u\, g_{S^2}$ be an ancient  compact solution to
the Ricci flow \eqref{eqn-ricci}.  Then   $u$ is either one of  the contracting spheres   or 
one of the King-Rosenau solutions.
\end{thm}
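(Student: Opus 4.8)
The proof splits according to the type of the solution, and both parts rest on the a priori bounds for $v$, $\ds v$ and $\ds^2v$ established earlier, which furnish the compactness needed to extract limits of the rescaled flows as $t\to-\infty$.

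\emph{Type I.} Assume $\limsup_{t\to-\infty}|t|\,\max_{S^2}R(\cdot,t)<\infty$. The parabolic rescalings $\hat g_k(\cdot,s)=|t_k|^{-1}g(\cdot,|t_k|s)$ about times $t_k\to-\infty$ have, by the type~I bound, curvature uniformly bounded on $(-\infty,-\delta]$ for each $\delta>0$; together with the $\kappa$-noncollapsing of ancient solutions and Hamilton's trace Harnack inequality \cite{Ha1} this yields, along a subsequence, a nonflat complete self-similar ancient solution on a compact surface, i.e.\ a shrinking gradient Ricci soliton, which by the classification of two-dimensional shrinkers is the round sphere. To promote this to rigidity of $g(\cdot,t)$ itself, use Hamilton's surface entropy $N=\int_{S^2}R\log R\,dA$ of the area-normalized flow: it is non-increasing in time \cite{Ha1}, and by Jensen's inequality it is bounded below by its value on the round sphere of the same area, with equality only at the round sphere. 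Since $g(\cdot,t)$ becomes spherical as $t\to0^-$ \cite{Ch} and is asymptotically round as $t\to-\infty$ by the above, $N$ has the same minimal limit at both ends of time, hence is constant; therefore $R$ is spatially constant for every $t$ and $g(\cdot,t)$ is a contracting sphere \eqref{eqn-spheres}.

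\emph{Type II: the cigar limit and symmetry.} Now assume $\sup_t|t|\,\max_{S^2}R(\cdot,t)=\infty$. Choosing $t_k\to-\infty$ and points $p_k$ at which $\lambda_k:=R(p_k,t_k)$ is essentially maximal over space--time up to $t_k$ while $|t_k|\lambda_k\to\infty$, a Hamilton-type point-selection argument together with the a priori estimates, $\kappa$-noncollapsing and the Harnack inequality produces a complete \emph{eternal} subsequential limit of $\lambda_k\,g(\cdot,t_k+\lambda_k^{-1}s)$ of bounded curvature attaining its space--time maximum; by Hamilton's theorem on eternal solutions in dimension two this limit is the cigar soliton, so one or two cigar ends form as $t\to-\infty$. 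To deduce that $g(\cdot,t)=u\,g_{S^2}$ is itself rotationally symmetric, apply the Aleksandrov reflection (moving plane) method to \eqref{eqn-p1}: reflections across great circles are isometries of $g_{S^2}$, and comparing $v$ with its reflection on moving families of spherical caps via the parabolic maximum principle --- started and closed using the rotational symmetry of the cigar at the backward end and the (reflection-symmetric) round limit at the forward end --- forces $v(\cdot,t)$ to be invariant under a reflection in each family, hence rotationally symmetric about a fixed axis and reflection symmetric across the corresponding equator. Thus $v=v(\psi,t)$ with $v(\psi,t)=v(-\psi,t)$.

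\emph{Type II: rigidity.} In the coordinate $x=\sin\psi\in(-1,1)$ one has $\dds=\partial_x\big((1-x^2)\partial_x\big)$ on radial functions, so \eqref{eqn-p1} becomes
\begin{equation*}
v_t=v\big[(1-x^2)v_{xx}-2x\,v_x\big]-(1-x^2)\,v_x^2+2v^2 ,
\end{equation*}
with $v>0$, even in $x$, smooth up to $x=\pm1$. Both model solutions lie in the affine-in-$x^2$ class $v=\alpha(t)+\gamma(t)\,x^2$: the contracting sphere has $\gamma\equiv0$, while $v_K=a(t)-b(t)\sin^2\psi$ is exactly this class with $\alpha=a$, $\gamma=-b$, $\gamma\not\equiv0$. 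The heart of the proof is to show that every ancient solution of the displayed equation belongs to this class; to this end one derives the evolution equation of a suitable auxiliary quantity measuring the failure of $v(\cdot,t)$ to be affine in $x^2$ (an appropriately weighted form of $v_{xx}$, respectively of a third-order expression, chosen so that its evolution can be handled) and shows, using the boundary behaviour at the poles $x=\pm1$ and the precise cigar asymptotics as $t\to-\infty$ together with a maximum-principle/Sturmian intersection-comparison argument, that it vanishes identically. Once $v=\alpha(t)+\gamma(t)x^2$, substituting into the displayed equation and matching powers of $x$ (the $x^4$ terms cancel automatically) gives the system $\alpha'=2\alpha(\alpha+\gamma)$, $\gamma'=-2\gamma(\alpha+\gamma)$, for which $\alpha\gamma$ is constant; its bounded ancient solutions on $(-\infty,0)$ becoming extinct at $t=0$ are precisely $\gamma\equiv0,\ \alpha=\tfrac1{2(-t)}$ --- the contracting spheres --- and, when $\alpha\gamma=-\mu^2<0$, $\alpha=-\mu\coth(2\mu t)$, $\gamma=\mu\tanh(2\mu t)$ --- the King--Rosenau solutions \eqref{eqn-King-Rosenau}. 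This completes the proof. The decisive step, and the one I expect to be by far the hardest, is this last rigidity argument: isolating the correct monotone (or Sturmian) quantity and controlling it all the way down to the backward cigar end, where the a priori estimates and the precise asymptotic geometry are indispensable; the Aleksandrov-reflection step is also delicate because the limiting geometry at $t=-\infty$ is noncompact.
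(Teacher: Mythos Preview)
Your strategy differs from the paper's, and the Type~II branch has a real gap. The paper does not split by Type~I/II; instead a Lyapunov functional $J(t)=\int_{S^2}(|\ds v|^2/v-4v)\,da$ together with the a~priori estimates forces $v(\cdot,t)\to v_\infty$ in $C^{1,\alpha}$, with $v_\infty$ a steady state, and a Brezis--Merle concentration argument then shows $v_\infty=\mu\cos^2\psi$ in suitable coordinates. The dichotomy is $\mu=0$ versus $\mu>0$. For $\mu>0$ the paper works in stereographic coordinates and introduces
\[
Q=\bar v\bigl[(\bar v_{xxx}-3\bar v_{xyy})^2+(\bar v_{yyy}-3\bar v_{xxy})^2\bigr],
\]
a globally defined scalar on $S^2$ satisfying $Q_t-\bar v\,\Delta Q\le 0$ and tending to $0$ as $t\to-\infty$ (both cylinder and cigar annihilate it); hence $Q\equiv0$, and elementary algebra then forces the King--Rosenau polynomial. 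The crucial feature is that $Q$ requires \emph{no} a~priori rotational symmetry---symmetry is an output, not an input. For $\mu=0$ the paper uses Hamilton's isoperimetric monotonicity, not entropy: if $I(t_0)<1$ then $I(t)\le C/|t|$, and a comparison argument around the isoperimetric curves produces curves of bounded $g(t)$-length but spherical length bounded below, contradicting $u\to\infty$.

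Your Alexandrov reflection step is the gap. To run moving planes from $t=-\infty$ you would first need the (possibly two) cigar ends to share a common axis on $S^2$, which you have not established; you would also need a comparison principle initiated at a noncompact degenerate backward limit where $v$ vanishes at the poles, and you have not indicated how to do this. These are precisely the difficulties the paper's $Q$-quantity sidesteps. Even granting symmetry, the ``suitable auxiliary quantity'' in your rigidity step is left unspecified---identifying it is the heart of the matter (this is $Q$, or its reduction to the radially symmetric setting) and cannot be deferred to a sketch. Your Type~I entropy argument is more plausible in outline, but still needs the backward rescaled convergence to be strong enough that the normalized entropy actually attains the spherical value at $-\infty$; this continuity under Cheeger--Gromov convergence is not automatic, since $R\log R$ is not controlled uniformly a~priori.
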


\begin{rem}
The classification of two-dimensional,  complete, non-compact  ancient solutions of the Ricci flow 
 was recently  given in \cite{DS} (see also in \cite{Ha3}, \cite{SC}). The result in 
Theorem \ref{thm-class-anc} together with the results in \cite{DS} and \cite{SC} provide
a complete classification of all ancient two-dimensional complete solutions to  the Ricci flow, with the scalar curvature uniformly bounded at each time-slice.

\end{rem}

The  outline of the paper is as follows: 

\begin{enumerate}[i.]

\item In section \ref{sec-apriori} we will show a' priori  derivative estimates  on 
any  ancient solution $v$ of  \eqref{eqn-p1}, which hold  uniformly in time, up  to $t=-\infty$. These estimates will play a crucial role throughout 
the rest of the paper. 

\item In section \ref{sec-Lyapunov} we will introduce a suitable Lyapunov functional  and
we will use it to  show that the  solution $v(\cdot,t)$ of  \eqref{eqn-p1} converges,  as $t \to - \infty$, in
the  $C^{1,\alpha}$ norm, to a steady state $v_\infty$.

\item Section \ref{sec-limit} will be devoted to the classification of all
backward limits $v_\infty$.  We will show that  there is a
parametrization of the flow by a sphere,  in which $v_\infty(\psi,\theta) =
\mu\, \cos^2\psi$,  for some $\mu \ge 0$ ($\psi,\theta$ are the global coordinates on $S^2$).
When $\mu>0$,  then $v_\infty$ represents the cylindrical metric. 

\item In section \ref{section-KR} we will show that if $v_\infty(\psi,\theta) =
\mu\, \cos^2\psi$,    with $\mu >0$, then $v$ must be one of the King-Rosenau solutions.

\item In the last section \ref{section-sphere} we will show that if $v_\infty \equiv 0$,  then the 
solution $v$ must be  one of the  contracting spheres.

\end{enumerate}

\medskip

\noindent{\bf Acknowledgments.} We are indebted   to the referees of this article for their  valuable  suggestions and
comments. 

\subsection{Change of variables}

Throughout the paper we will be performing different changes of variables that we summarize below.
We may  write the evolving metric
$$g(\cdot,t) = u(\cdot,t) g_{S^2} = \bar{u}(\cdot,t) g_e = {\hat u}(\cdot,t) g_{c}$$
where $g_{S^2}, g_e, g_{c}$ denote  the metrics  on the standard sphere $S^2$, the euclidean plane and the cylinder, respectively. Denote by $\psi, \theta$ the global coordinates on $S^2$, by $r, \theta$ the polar coordinates on the plane and by $s, \theta$ the global coordinates on the cylinder. Mercator's projection gives us a relation between the sphere and the cylinder, that is, 
$$g(\cdot,t) = u(\psi,\theta,t) \, (d\psi^2 + \cos^2\psi \, d\theta^2) = {\hat u}(s,\theta,t)\,  (ds^2 + d\theta^2)$$
with 
\begin{equation}
\label{eqn-uU}
{\hat u}(s,\theta,t) = u(\psi,\theta,t)\cos^2\psi\qquad \cos  \psi = (\cosh s)^{-1}.
\end{equation}
It follows that
\begin{equation}\label{eqn-cv}
\cosh s = \sec \psi \qquad \mbox{and} \qquad \sinh s = \tan \psi,
\end{equation}

\noindent A simple computation shows  that if $u$ is a solution of \eqref{eqn-U}, then 
${\hat u}$ satisfies the  equation
\begin{equation}
\label{eqn-hatu}
{\hat u}_t = \Delta_c \log {\hat u}\qquad \mbox{on} \,\,\, \R \times [0,2\pi] \times (-\infty,0)
\end{equation}
where  $\Delta_c$ is the  cylindrical Laplacian, that is 
$$\Delta_c f = f_{ss} + f_{\theta\theta}$$
for a function $f$ defined on $\R \times [0,2\pi]$. 
Furthermore, if 
$$g(\cdot,t) = \bar{u}(r,\theta,t) (dr^2 + r^2\, d\theta^2)$$ where $(r,\theta)$ denote the polar coordinates of the plane
obtained by projecting $S^2 \setminus \{ \psi = \frac \psi 2 \}$ or $S^2 \setminus \{ \psi = -\frac \psi 2 \}$ via stereographic projection, it is
easy to compute that 
\begin{equation}
\label{eqn-Ubaru}
{\hat u}(s,\theta,t) = r^2 \, \bar{u}(r,\theta,t), \qquad r = e^s.
\end{equation}
The function  $\bar{u}$ satisfies the  equation
\begin{equation}
\label{eqn-baru}
\bar{u}_t = \Delta \log\bar{u}\qquad \mbox{on} \,\,\, \mathbb{R}^2\times (-\infty,0)
\end{equation}
where $\Delta$ is the euclidean Laplacian in polar coordinates. 

\noindent Equivalently, the  pressure functions  in cylindrical and polar coordinates given by  
$$\hat v(s,\theta,t) := \frac{1}{{\hat u}(s,\theta,t)}\quad \mbox{and} \quad  \bar{v}(r,\theta,t) := \frac{1}{\bar{u}(r,\theta,t)}$$ 
satisfy the  evolution equations
$$\hat v_t = \hat v\, \Delta_c \hat v - |\nabla \hat v|^2\qquad \mbox{on} \,\,\, \mathbb{R}^2\times (-\infty,0)$$
and
$$\bar{v}_t = \bar{v}\, \Delta\bar{v} - |\nabla\bar{v}|^2\qquad \mbox{on} \,\,\, \mathbb{R}^2\times (-\infty,0)$$
respectively. 

The change of variables between the sphere $S^2 \setminus \{ \psi = \frac \pi 2\}$ and the plane 
which maps the south pole $\psi = -\pi/2$ to the origin, is given via stereographic projection, that is,
$$r = \frac{1+\sin\psi}{\cos\psi}$$
and if  $g(\cdot,t) = u(\psi,\theta,t) g_{S^2} = \bar{u}(r,\theta,t) g_e$, then 
$$\bar{u}(r,\theta,t) = \frac{\cos^4\psi}{(1+\sin\psi)^2} \, u(\psi,\theta,t).$$

\section{A' priori estimates}
\label{sec-apriori}

We will assume,  throughout this section,  that $v$ is an ancient solution of
the Ricci flow \eqref{eqn-p1} on $S^2 \times (-\infty,0)$ which becomes extinct  at $T=0$.
We fix $t_0 < 0$. We   will establish a'priori derivative estimates  
on $v$ which hold uniformly  on $S^2 \times (-\infty,t_0]$. 
 We will denote by $C$
various constants which depend on $t_0$ and may vary from line to line but they are always 
independent of time $t \in (-\infty,t_0]$. 

Since our solution is  ancient, the scalar curvature 
$R= {v_t}/{v}$ is strictly positive.
This in  particular implies that $v_t >0$. Hence, we have the bound
\begin{equation}\label{eqn-bv}
v(\cdot,t) \leq C(t_0)\qquad \mbox{on}\,\,\, (-\infty,t_0)
\end{equation}
for any $t_0 <0$. 
Define the backward limit   
$$v_\infty:=\lim_{t \to -\infty} v(\cdot,t)$$ 
which exists because of the inequality $v_t >0$ but  may vanish at some
points on $S^2$. This actually happens in  our model, the King-Rosenau solution.
As a result,  the  equation \eqref{eqn-p1} fails to be uniformly parabolic  near those
points,  as $t \to -\infty$,  and the standard parabolic and elliptic derivative estimates
fail as well. Nevertheless, it is essential for our
classification result, to establish a priori derivative estimates  which hold uniformly
in time, as $t \to -\infty$. 
 
We recall that  on our ancient solution,  the 
Harnack estimate for the scalar curvature shown in \cite{Ha1} takes the form
$$R_t \ge \frac{|\nabla R|_g^2}{R}$$
which in particular implies that $R_t \geq 0$. Hence, we also have
\begin{equation}\label{eqn-bR}
R(\cdot,t) \leq C\qquad \mbox{on}\,\,\, (-\infty,t_0]
\end{equation}
for any $t_0 <0$, because $R(\cdot,t) \leq R(\cdot,t_0)$.
Once we have the uniform curvature bound along the flow, Shi's derivative estimates  in \cite{Shi} (in the compact case the curvature derivative estimates have been obtained by Hamilton in \cite{Ha4} as well) imply that
$$|\nabla^k R| \le C(k) \,\,\, t \le t_0 < 0.$$
 In addition, the limit
$$R_\infty:= \lim_{t \to -\infty} R(\cdot,t)$$
exists and it is bounded. We will actually show in the next section that $R_\infty=0$ a.e. on $S^2$. 

It is well known   $R$ evolves  by 
$$R_t= \Delta_g R + R^2.$$ 
Expressing  $g = v^{-1}\, g_{S^2}$, in which case $\Delta_g = v \, \Delta_{S^2}$
and $|\nabla\cdot|^2_g = v|\nabla\cdot|_{S^2}^2$, we may  rewrite the Harnack estimate
for $R$ as
$$v\, \dds R + R^2 \ge \frac{v \, |\ds R|^2}{R}$$
or,  equivalently 
\begin{equation}
\label{eq-harnack-R}
\dds R + \frac{R^2}{v} \ge \frac{|\ds R|^2}{R}.
\end{equation}
The pressure $v$ satisfies the elliptic equation
\begin{equation}
\label{eq-ellip-v}
v\, \dds v - |\ds v|^2 + 2v^2 = R\, v.
\end{equation}
We will next use   this equation and the bounds \eqref{eqn-bv}, \eqref{eqn-bR}
and \eqref{eq-harnack-R} to establish uniform first and second order derivative 
estimates on $v$.

\begin{lem}
\label{lem-first-sec-der}
There exists a uniform constant $C$, independent of time, so that
$$\sup_{S^2} \left (|\dds v| +  \frac{|\ds v|^2}{v} \right )(\cdot,t)  \le C 
\qquad 
\mbox{for all} \,\,\,  t\le t_0 < 0.$$
\end{lem}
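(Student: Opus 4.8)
The plan is to derive the bound on $|\ds v|^2/v$ first by a maximum-principle argument applied to a suitable quantity built from $v$, and then to get the bound on $|\dds v|$ from the elliptic equation \eqref{eq-ellip-v} together with the curvature bound \eqref{eqn-bR}. For the gradient term, the natural candidate is to control the gradient of $w := \log v$, or equivalently to bound the quantity $P := |\ds v|^2/v$, at an interior (in time) maximum over $S^2$ of the spatial maximum. Since $v$ is only an ancient solution and not uniformly parabolic near the vanishing set of $v_\infty$, I cannot simply quote parabolic estimates; instead I would work at a spatial point where $P(\cdot,t)$ achieves its maximum over $S^2$ and use that $v$ itself satisfies the degenerate-parabolic/elliptic equation \eqref{eq-ellip-v}. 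At such a spatial maximum of $P$, one has $\ds P = 0$ and $\dds P \le 0$, and combining $\dds(|\ds v|^2) = 2|\hes v|^2 + 2\ds v\cdot\ds(\dds v) + 2\mathrm{Ric}_{S^2}(\ds v,\ds v)$ (using that $S^2$ has Ricci curvature $+1$) with the equation satisfied by $\dds v$ obtained by differentiating \eqref{eq-ellip-v}, one gets a differential inequality in which the dangerous terms appear with a favorable sign because of the positivity of $R$, the bound $v\le C$ from \eqref{eqn-bv}, and the Harnack inequality \eqref{eq-harnack-R}. The upshot is an inequality of the schematic form $0 \ge c\, P^2 - C\, P - C$ at the maximizing point, which yields $P \le C$ uniformly in $t\le t_0$.

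Once $|\ds v|^2/v \le C$ is established, the bound on $|\dds v|$ follows almost immediately: from the elliptic equation \eqref{eq-ellip-v} we have
$$
v\,\dds v = |\ds v|^2 - 2v^2 + R\,v,
$$
so at any point where $v(\cdot,t)>0$ we can divide by $v$ to get $\dds v = |\ds v|^2/v - 2v + R$, and each of the three terms on the right is bounded uniformly in time by \eqref{eqn-bv}, \eqref{eqn-bR} and the gradient bound just proved. At points where $v(\cdot,t)=0$ one argues by continuity (or notes that such points do not occur for $t<0$ since $v_t>0$ forces $v(\cdot,t) > v_\infty \ge 0$ with strict inequality away from the limiting set, and on the interior of the time interval $v>0$ everywhere); in any case $\dds v$ is a smooth function on the compact sphere and the identity $\dds v = |\ds v|^2/v - 2v + R$ extends by continuity, giving $|\dds v| \le C$.

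The main obstacle is the first step: making the maximum-principle computation for $P = |\ds v|^2/v$ clean, and in particular controlling the cross term $\ds v\cdot\ds(\dds v)$ and the Hessian term $|\hes v|^2$ without losing the degeneracy near $\{v_\infty = 0\}$. The key points that save the argument are (i) the normalization $P = |\ds v|^2/v$ rather than $|\ds v|^2$, so that the scaling matches the degeneracy of the equation; (ii) at a spatial maximum, $\ds P = 0$ converts $\ds(\dds v)$ into lower-order terms involving $\ds v$; and (iii) the Harnack estimate \eqref{eq-harnack-R}, which is exactly what is needed to absorb the term coming from $\ds R$ into the good quadratic term. One must also use the curvature $+1$ of $S^2$, which contributes a term $+2|\ds v|^2/v = +2P$ of a definite (and here harmless or helpful) sign, and the elementary inequality $|\hes v|^2 \ge \tfrac12 (\dds v)^2$ to close the estimate. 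I would carry out this computation carefully, keeping track of signs, and then deduce the Hessian bound as above.
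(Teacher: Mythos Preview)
Your second step is fine: once $|\ds v|^2/v \le C$ is known, the identity $\dds v = |\ds v|^2/v - 2v + R$ together with \eqref{eqn-bv} and \eqref{eqn-bR} gives $|\dds v|\le C$ immediately. The gap is in the first step. If you actually carry out the maximum-principle computation for $P=|\ds v|^2/v$ at a spatial maximum, the claimed inequality $0\ge cP^2-CP-C$ does \emph{not} materialize: the $P^2$ terms cancel exactly. In normal coordinates at the max with $\ds v=(|\ds v|,0)$, the condition $\ds P=0$ forces $v_{11}=P/2$ and $v_{12}=0$; then $\dds v = P-2v+R$ gives $v_{22}=P/2-2v+R$, so $2|\hes v|^2 = \tfrac{P^2}{2}+2(P/2-2v+R)^2$. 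Plugging into $0\ge v\,\dds P$ (via Bochner on $S^2$) leaves
\[
0\ \ge\ -4vP + PR + 8v^2 - 8vR + 2R^2 + 2\,\ds v\cdot\ds R,
\]
with no $P^2$ term at all. Since $v$ can be arbitrarily small at the maximum of $P$ (this is exactly what happens on the King--Rosenau solution, where $P$ is largest near the poles), the coefficient $-4v$ gives no uniform control. The Harnack estimate, or equivalently Shi's bound $\sqrt{v}\,|\ds R|=|\nabla R|_g\le C$, only yields $|2\,\ds v\cdot\ds R|\le C\sqrt{P}$, which is subcritical and does not close the argument. A parabolic version (looking at $P_t$ at a space-time max) runs into the same cancellation.

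The paper proceeds in the opposite order and with a different mechanism. It first bounds $\dds v$: differentiating \eqref{eq-ellip-v} twice and using Bochner yields an elliptic inequality for $\dds(\dds v)$ in which the Harnack estimate \eqref{eq-harnack-R} is used not as a pointwise bound on $|\ds R|$ but to complete the square $\tfrac{1}{R}\bigl|\ds R + R\,\ds v/v\bigr|^2\ge 0$, eliminating the bad curvature-gradient terms. This produces $\dds(\dds v + 6v)\ge -C$ together with the pointwise lower bound $\dds v\ge -2v+R>-\bar C$. Setting $X=\dds v+\bar C+6v>0$, one then applies Moser iteration to the subharmonic-type inequality $\dds X\ge -C$; the crucial point is that $\int_{S^2}X\,da$ is uniformly bounded because $\int_{S^2}\dds v\,da=0$. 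This yields $|\dds v|\le C$, and \emph{then} the gradient bound follows from the algebraic identity $|\ds v|^2/v=\dds v+2v-R$.
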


\begin{proof}
To simplify the notation we will set, throughout the proof,  $\Delta := \dds$ and $\nabla := \nabla_{S^2}$.
We first differentiate 
(\ref{eq-ellip-v}) twice to  compute the equation for $\Delta v$. After a  direct computation, where we also use  the Bochner formula on $S^2$,  we find that 
$$\Delta(\Delta v) + \frac{(\Delta v)^2 - 2\, |\nabla^2 v|^2}{v} + 4\Delta v 
+ 2 \frac{|\nabla v|^2}{v} = \frac{\Delta(Rv)}{v}$$
which implies the inequality  
\begin{equation}
\label{eq-step1}
\Delta(\Delta v + 4v) \ge  -2\, \frac{|\nabla v|^2}{v} + \Delta R + 
2\frac{\nabla R\cdot\nabla v}{v} + \frac{R\, \Delta v}{v}
\end{equation}
since by the trace formula we have 
$$(\Delta v)^2 \le 2|\nabla^2 v|^2.$$
By  (\ref{eq-ellip-v}) we also have 
\begin{equation}
\label{eq-vv}
\Delta v = \frac{|\nabla v|^2}{v} - 2v + R
\end{equation}
and if use this  to replace $\Delta v$ from   the last term
on the right hand side of (\ref{eq-step1}),  we obtain
\begin{equation*}
\begin{split}
\Delta(\Delta v + 4v )  &\ge  -2\frac{|\nabla v|^2}{v} 
+ \Delta R + 2\frac{\nabla R\cdot\nabla v}{v} +
\frac{R}{v} \left (\frac{|\nabla v|^2}{v} - 2v + R \right  ) \\
&= -2 \frac{|\nabla v|^2}{v} + (\Delta R + \frac{R^2}{v})
+  2\frac{\nabla R\cdot\nabla v}{v} + \frac{R \, |\nabla v|^2}{v^2}
- 2R .
\end{split}
\end{equation*}
Combining the last inequality and  the Harnack estimate (\ref{eq-harnack-R}) we obtain  
\begin{equation*}
\begin{split}
\Delta(\Delta v &+ 4v) \ge \,\, -2\frac{|\nabla v|^2}{v} + 
\frac{|\nabla R|^2}{R} + 2\frac{\nabla R \cdot\nabla v}{v} + 
\frac{R|\nabla v|^2}{v^2} - 2R \nonumber\\
&= \,\,-2\frac{|\nabla v|^2}{v} + \frac{1}{R}\left ( |\nabla R|^2 +
2\, \nabla R\cdot R \, \frac{\nabla v}{v} + R^2 \, \frac{|\nabla v|^2}{v^2} \right ) - 2R \nonumber \\
&= \,\, -2\frac{|\nabla v|^2}{v} + \frac{1}{R}\, \left  |\nabla R + 
R\frac{\nabla v}{v} \right |^2 -2R.
\end{split} 
\end{equation*}
Since $R >0$ we conclude the estimate
\begin{equation}\label{eq-lap}
\Delta(\Delta v + 4v) \ge  -2\, \frac{|\nabla v|^2}{v} - 2R.
\end{equation}
If we multiply (\ref{eq-vv}) by $m=2$ and add 
it to (\ref{eq-lap}) we get
$$\Delta(\Delta v + 6\, v) \ge -4v  \geq -C$$
for a uniform constant $C$ (independent of time). By (\ref{eq-vv})
we also have
\begin{equation}
\label{eq-lower-lap}
\Delta v \ge -2v + R > -\bar{C}
\end{equation}
and therefore  
$$X:= \Delta v + \bar{C} + 6v > 0$$
and 
\begin{equation}
\label{eq-moser}
\Delta X \ge -C. 
\end{equation}
Standard Moser iteration applied to (\ref{eq-moser})
yields to the bound 
\begin{equation}
\label{eq-upper-lap}
\sup X \le C_1\, \int_{S^2} X\, da + C_2.
\end{equation}
Observe that
$$\int_{S^2}X\, da = \int_{S^2}(\Delta v + \bar{C} + 6v)\, da 
= \int_{S^2}(\bar{C} + 6v)\, da \le C.$$
The last estimate combined with (\ref{eq-upper-lap}) yields  to the bound
$$\Delta v \le  C.$$
This together with (\ref{eq-lower-lap}) imply
\begin{equation}
\label{eq-lap-bound}
\sup_{S^2}|\Delta v|(\cdot,t) \le C\qquad \mbox{for all} \,\,\, t\in (-\infty,t_0]
\end{equation}
for $C$ is a uniform constant. Since
$$\frac{|\nabla v|^2}{v} = \Delta v + 2v - R$$
the estimate (\ref{eq-lap-bound}) and  $R > 0$, readily imply the bound
\begin{equation}
\label{eq-first-der2}
\sup_{S^2} \, \frac{|\nabla v|^2}{v}  \le C\qquad  \mbox{for all} \,\,\, t\in (-\infty,t_0].
\end{equation}
\end{proof}

As a consequence of the previous lemma  we have:

\begin{cor}\label{cor-c1a}
For any $p \geq 1$, we have
\begin{equation}\label{eqn-w2p}
\| v (\cdot,t)  \|_{W^{2,p}(S^2)} \leq C(p)\qquad \mbox{for all} \,\,\, t\in (-\infty,t_0].
\end{equation}
It follows that  for  any $\alpha < 1$, we have
\begin{equation}\label{eqn-c1a}
\| v (\cdot,t)  \|_{C^{1,\alpha}(S^2)} \leq C(\alpha)\qquad \mbox{for all} \,\,\, t\in (-\infty,t_0].
\end{equation}
\end{cor}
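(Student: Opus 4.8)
The plan is to deduce Corollary \ref{cor-c1a} directly from Lemma \ref{lem-first-sec-der} together with the bound \eqref{eqn-bv} and standard elliptic and Sobolev theory on the closed surface $S^2$. First I would record the pointwise facts already available: by \eqref{eqn-bv} we have $0 < v(\cdot,t) \le C(t_0)$ uniformly on $(-\infty,t_0]$, and by Lemma \ref{lem-first-sec-der} both $|\Delta_{S^2} v|$ and $|\nabla_{S^2} v|^2/v$ are bounded by a uniform constant $C$. Since $v \le C$, the second of these gives $|\nabla_{S^2} v|^2 \le C\, v \le C^2$, hence $|\nabla_{S^2} v|(\cdot,t) \le C$ uniformly in time. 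Thus $v(\cdot,t)$ is uniformly bounded in $C^1(S^2)$ and, crucially, $\Delta_{S^2} v(\cdot,t)$ is uniformly bounded in $L^\infty(S^2)$.

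Next I would invoke $L^p$ elliptic regularity for the Laplacian on the compact manifold $S^2$. Writing $f := \Delta_{S^2} v$, we have $\|f(\cdot,t)\|_{L^\infty(S^2)} \le C$, hence $\|f(\cdot,t)\|_{L^p(S^2)} \le C(p)$ for every $p \ge 1$ since $S^2$ has finite volume. The Calderón–Zygmund estimate for $\Delta_{S^2}$ on the closed surface then yields
\begin{equation*}
\| v(\cdot,t) \|_{W^{2,p}(S^2)} \le C\left( \|\Delta_{S^2} v(\cdot,t)\|_{L^p(S^2)} + \|v(\cdot,t)\|_{L^p(S^2)} \right) \le C(p)
\end{equation*}
for every $t \le t_0$, which is \eqref{eqn-w2p}. (One must make sure the additive $\|v\|_{L^p}$ term is there to handle the kernel of the Laplacian, namely the constants; this is standard.) Finally, the Sobolev embedding $W^{2,p}(S^2) \hookrightarrow C^{1,\alpha}(S^2)$, valid for $p > 2/(1-\alpha)$ — i.e. for every $\alpha < 1$ upon choosing $p$ large enough depending on $\alpha$ — converts \eqref{eqn-w2p} into \eqref{eqn-c1a}, with the constant now depending on $\alpha$ through the chosen $p$.

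There is no serious obstacle here: the corollary is a soft consequence of the lemma, and the only points requiring minor care are (i) converting the mixed bound on $|\nabla v|^2/v$ into a genuine gradient bound using $v \le C$, and (ii) remembering that on a closed manifold $L^p$ elliptic estimates for $\Delta_{S^2}$ need the zeroth-order term $\|v\|_{L^p}$ on the right-hand side because $\Delta_{S^2}$ annihilates constants. Both are routine, so the proof is essentially two or three lines citing Lemma \ref{lem-first-sec-der}, Calderón–Zygmund $L^p$ estimates, and Sobolev embedding.
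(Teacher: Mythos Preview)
Your proposal is correct and follows essentially the same approach as the paper: the paper's proof simply observes that $\Delta_{S^2} v = f$ with $f \in L^\infty$ (by Lemma~\ref{lem-first-sec-der}), invokes standard $W^{2,p}$ estimates for the Laplacian, and then applies Sobolev embedding. Your write-up is more detailed---in particular you make explicit the need for the lower-order term $\|v\|_{L^p}$ to handle the kernel of $\Delta_{S^2}$---but the argument is the same.
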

\begin{proof} 
Since $\dds v = f$ in $S^2$, with $f \in L^\infty$, standard $W^{2,p}$
estimates for Laplace's equation imply that $v \in W^{2,p}(S^2)$ for all $ p \geq 1$.
Hence,  \eqref{eqn-c1a} follows by  the Sobolev embedding theorem. 
\end{proof}

We will now use the estimates proven above to improve the regularity of the function $v$. 

\begin{lem}
\label{lem-first-alpha}
For every  $0 < \alpha <1$, there is a uniform constant $C(\alpha)$ so that 
\begin{equation}\label{eqn-first-alpha}
\| |\ds v (\cdot,t)|^2  \|_{C^{1,\alpha}(S^2)} \le C(\alpha) \qquad \mbox{for all} \,\,\, t \le t_0 < 0.
\end{equation}
\end{lem}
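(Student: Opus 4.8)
The plan is to bootstrap from the $C^{1,\alpha}$ bound on $v$ itself (Corollary \ref{cor-c1a}) together with the $L^\infty$ bound on $\dds v$ (Lemma \ref{lem-first-sec-der}). Set $w := |\ds v|^2$. The first step is to derive an elliptic equation, or at least a differential inequality, for $w$ on $S^2$. Differentiating the elliptic equation \eqref{eq-ellip-v} once and contracting with $\nabla v$, and using the Bochner formula on $S^2$, one obtains an identity of the schematic form
\[
\dds w = 2|\nabla^2 v|^2 + 2\,\nabla v \cdot \nabla(\dds v) + 2\,\mathrm{Ric}_{S^2}(\nabla v,\nabla v) + (\text{lower order in } v, \nabla v),
\]
and then one rewrites $\nabla v \cdot \nabla(\dds v)$ using the differentiated form of \eqref{eq-ellip-v} so that it is expressed in terms of $v$, $\nabla v$, $\nabla^2 v$, $R$ and $\nabla R$. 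Since $R$ and all its spatial derivatives are uniformly bounded (Shi/Hamilton estimates, as recalled in the text), and $v$, $\nabla v$, $\dds v$ are uniformly bounded, the only term that is not obviously controlled in $L^\infty$ is $|\nabla^2 v|^2$.

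The second step handles that term. We do \emph{not} have a pointwise bound on the full Hessian, only on its trace, so the strategy is to keep $|\nabla^2 v|^2$ on the good side: one shows $\dds w \ge -C$ pointwise (all the genuinely dangerous terms, in particular $2|\nabla^2 v|^2 \ge 0$, have a favorable sign, and the remaining terms involving $\nabla R$, $R$, $v$, $\nabla v$ are bounded in $L^\infty$ by the estimates already established; the term $2\,\nabla v\cdot\nabla(\dds v)$, after substitution, produces a $+\frac{2}{v}|\nabla^2 v\,\nabla v|^2$-type completion of square against the Harnack term exactly as in the proof of Lemma \ref{lem-first-sec-der}, so it too can be discarded). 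This gives $\dds w \ge -C$ with $C$ uniform in time. Combined with the obvious $L^1$ bound $\int_{S^2} w\,da = \int_{S^2} |\ds v|^2\,da = -\int_{S^2} v\,\dds v\,da \le C$ (integration by parts, using the $L^\infty$ bounds on $v$ and $\dds v$), Moser iteration applied to $w + C' \ge 0$ yields $\sup_{S^2} w \le C$. Thus $\dds v$, $\nabla v$ and $w$ are all uniformly bounded in $L^\infty$, so in particular the right-hand side of the equation for $\dds w$ is uniformly bounded in $L^\infty$: $\dds w = f$ with $\|f\|_{L^\infty} \le C$. Standard $W^{2,p}$ elliptic estimates then give $\|w\|_{W^{2,p}(S^2)} \le C(p)$ for all $p$, and Sobolev embedding gives $\|w\|_{C^{1,\alpha}(S^2)} \le C(\alpha)$ for every $\alpha < 1$, which is the claim.

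I expect the main obstacle to be the honest bookkeeping in the first step: producing the equation for $w = |\ds v|^2$ on the curved background $S^2$ requires care with the Bochner formula (the Ricci term of $S^2$ contributes, with the sign and constant fixed by $\mathrm{Ric}_{S^2} = g_{S^2}$), with the indices and Christoffel symbols of the spherical metric, and with tracking exactly which combination of $\nabla R$ and $R\nabla v/v$ appears so that one can complete the square against the Harnack inequality \eqref{eq-harnack-R} and discard it. The conceptual content, however, is just the same ``keep the Hessian-squared term, throw away everything with a good sign, Moser-iterate, then bootstrap $W^{2,p}\hookrightarrow C^{1,\alpha}$'' argument used in Lemma \ref{lem-first-sec-der} and Corollary \ref{cor-c1a}, now applied one derivative higher and powered by the curvature-derivative bounds. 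A minor technical point worth checking is that the elliptic estimates remain uniform in $t$: this is automatic since the background metric $g_{S^2}$ is fixed and all constants depend only on it.
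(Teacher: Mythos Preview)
Your plan has a genuine gap at the critical step. You write that after bounding $\sup w$ you have ``$\dds w = f$ with $\|f\|_{L^\infty}\le C$,'' but this is not true: the term $2|\nabla^2 v|^2$ appearing in $f$ is \emph{not} known to lie in $L^\infty$. Controlling $\dds v$ (the trace) pointwise tells you nothing pointwise about the full Hessian, and the Moser iteration you ran only bounds $w=|\nabla v|^2$, not $|\nabla^2 v|$. So the $L^\infty$ bound on $f$ fails, and without it you cannot invoke $W^{2,p}$ regularity as stated.

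The remedy is exactly the argument the paper uses, and it is simpler than your detour through Moser iteration and Harnack-based completions of squares: you already have $\|v\|_{W^{2,p}(S^2)}\le C(p)$ for every $p$ from Corollary~\ref{cor-c1a}, hence $|\nabla^2 v|^2\in L^p$ for every $p$. All the other terms in $f$ (namely $\frac{|\nabla v|^2}{v}\Delta v$, $\frac{\nabla w\cdot\nabla v}{v}\le 2|\nabla^2 v|\,\frac{|\nabla v|^2}{v}$, and $\frac{\nabla(Rv)\cdot\nabla v}{v}$, the latter handled using $\sqrt{v}\,|\nabla R|=|\nabla R|_{g}\le C$) are bounded in $L^p$ by Lemma~\ref{lem-first-sec-der} and the curvature-derivative estimates. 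So one obtains $\|f\|_{L^p}\le C(p)$ for all $p$, and then standard $W^{2,p}$ theory and Sobolev embedding give the $C^{1,\alpha}$ bound on $w$. Note also that your whole preliminary step (proving $\dds w\ge -C$ and Moser-iterating to get $\sup w\le C$) is unnecessary: $|\nabla v|^2\le C v\le C$ is immediate from Lemma~\ref{lem-first-sec-der}, so the $L^\infty$ bound on $w$ is already in hand before you begin.

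Incidentally, the paper derives the elliptic equation $\dds w=f$ not via Bochner but by comparing two expressions for $\partial_t|\nabla v|^2$ (one from the evolution of $v$, one from $v_t=Rv$); this is equivalent to your Bochner route and yields the same $f$.
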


\begin{proof}
To simplify the notation we will set $\Delta := \dds$ and $\nabla := \nabla_{S^2}$.
A direct computation shows that $|\nabla v|^2$ satisfies the evolution equation
$$\frac{\partial}{\partial t}\,  |\nabla v|^2 = v\, \Delta (|\nabla v|^2) -
6v\, |\nabla^2v|^2 + 2v|\nabla v|^2 + 2|\nabla v|^2\, \Delta v -
2 \, \nabla(|\nabla v|^2) \cdot \nabla v.$$
On the other hand, differentiating  the equation $v_t=R\, v$ gives 
$$\frac{\partial}{\partial t} \,  |\nabla v|^2= 2 \, \nabla(Rv) \cdot \nabla v.$$
Combining the above yields to 
\begin{equation}
\label{eq-first-der-ellip}
\Delta (|\nabla v|^2)  = f
\end{equation}
with $f$ given by 
\begin{equation}
\label{eqn-def-f}
f=2|\nabla^2v|^2 - 6 \, |\nabla v|^2 - 2\frac{|\nabla
v|^2}{v}\, \Delta v + \frac{2 \nabla(|\nabla v|^2) \cdot \nabla
v}{v} + \frac{2\nabla(Rv) \cdot \nabla v}{v}.
\end{equation}

\smallskip
We will show that  for every $p \geq 1$, we have 
\begin{equation}\label{eqn-fff}
\|f(\cdot, t)\|_{L^p(S^2)} \leq C(p)  \qquad  \mbox{for all} \,\,\, t\in (-\infty,t_0]
\end{equation}
with $C(p)$ independent of $t$. We will denote in the sequel by $C(p)$ various constants that are
independent of $t$. 
We begin by recalling that by  \eqref{eqn-w2p}, we have 
$$\|\nabla^2v(\cdot,t)\|_{L^p} \le C(p)\qquad  \mbox{for all} \,\,\, t\in (-\infty,t_0].$$
Also, by  Lemma \ref{lem-first-sec-der}, we have
$$\|2\frac{|\nabla v|^2}{v}\, \Delta v \|_{L^p(S^2)} + \| |\nabla v|^2 \|_{L^p(S^2)}
\le C(p)  \qquad  \mbox{for all} \,\,\, t\in (-\infty,t_0].$$
Since
$$|\frac{\nabla (|\nabla v|^2) \cdot \nabla v}{v}| \le 2|\nabla^2 v| \,  \frac{|\nabla v|^2}{v}$$
by the previous estimates,  we have
$$ \| \frac{\nabla|(\nabla v|^2) \cdot \nabla v}{v} \|_{L^p(S^2)}
\le C(p)  \qquad  \mbox{for all} \,\,\, t \le t_0 < 0.$$
We also have
\begin{eqnarray*}
|\frac{ \nabla(Rv) \cdot \nabla v }{v}| &\le& R\, \frac{|\nabla v|^2}{v} + |\nabla R|\, |\nabla v| \\
&\le& C + (\sqrt{v}|\nabla R|) \, \left (\frac{|\nabla v|}{\sqrt{v}} \right ) \le C
\end{eqnarray*}
for all $t \le t_0 < 0$, since $\sqrt{v}\, |\nabla R| = |\nabla R|_{g(t)} \le C$.
We can now conclude that \eqref{eqn-fff} holds,   for $p\ge 1$. Standard elliptic regularity estimates 
applied to \eqref{eq-first-der-ellip}  imply the bound 
$$\| |\nabla v|^2 \|_{W^{2,p}} \le C(p)\qquad \mbox{for all} \,\,\, t\in (-\infty,t_0].$$
Since the previous estimate holds for any $p \ge 1$, by the Sobolev embedding
theorem, we conclude \eqref{eqn-first-alpha}.
\end{proof}

\begin{lem}\label{lem-reg2}
For any $0 < \alpha <1$, there is a uniform in time constant $C(\alpha)$, so that
$$ \|\sqrt{v}\, \ds^2 v \|_{C^{0,\alpha}(S^2)} \le C(\alpha)\qquad \mbox{for all} \,\,\, t \le t_0 < 0. $$
Moreover, 
$$\|v \,  \ds^3 v \|_{L^\infty(S^2)}  \leq C\qquad \mbox{for all} \,\,\, t \le t_0 < 0$$
for a uniform in time constant $C$. 
\end{lem}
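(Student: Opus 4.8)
The plan is to establish the two claimed bounds by differentiating the elliptic equation \eqref{eq-ellip-v} one more time, in exactly the same spirit as in Lemmas \ref{lem-first-sec-der} and \ref{lem-first-alpha}, and then bootstrapping with the Schauder/$W^{2,p}$ estimates for Laplace's equation on $S^2$. The crucial point is that all ``bad'' terms that would otherwise blow up near the vanishing set of $v_\infty$ come multiplied by the right power of $v$; the weight $\sqrt v$ is precisely what makes $\sqrt v\, \nabla^2 v$ and $v\, \nabla^3 v$ scale-invariant quantities (note $|\nabla v|^2/v$, $\sqrt v\,|\nabla R| = |\nabla R|_g$, and $v\,\Delta v$ are all controlled uniformly, the first by Lemma \ref{lem-first-sec-der}, the second by Shi's estimates, the third again by Lemma \ref{lem-first-sec-der}).

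For the first bound, I would start from the identity $\Delta(|\nabla v|^2) = f$ of \eqref{eq-first-der-ellip}, which by Lemma \ref{lem-first-alpha} gives $\||\nabla v|^2\|_{C^{1,\alpha}} \le C(\alpha)$ and in particular $\|f\|_{C^{0,\alpha}}$ will follow once we control $\||\nabla^2 v|^2\|_{C^{0,\alpha}}$; but more directly, write $|\nabla^2 v|^2 = \frac12\Delta(|\nabla v|^2) + (\text{lower order})$ using the Bochner formula, so it suffices to estimate $v \cdot f$ in $C^{0,\alpha}$. Expanding $vf$ from \eqref{eqn-def-f}, every term is a product of factors that are uniformly bounded in $C^{0,\alpha}$ by the already-established estimates: $v\,|\nabla^2 v|^2 = (\sqrt v\,\nabla^2 v)^2$ is the only genuinely new piece, and it is handled by a standard interpolation/boot-strap argument treating $w := |\nabla v|^2$ as the unknown of $\Delta w = f$ with $f$ expressed back in terms of $w$ and $\nabla w$ (both in $C^{0,\alpha}$ by Lemma \ref{lem-first-alpha}) together with the factor $v \in C^{1,\alpha}$. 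Concretely: from $\Delta v = |\nabla v|^2/v - 2v + R$ and $|\nabla v|^2/v \in C^{0,\alpha}$ (by Lemma \ref{lem-first-alpha}), $v \in C^{1,\alpha}$, and $R \in C^\infty$, one gets $\Delta v \in C^{0,\alpha}$, hence $v \in C^{2,\alpha}$ by Schauder, hence $\nabla^2 v \in C^{0,\alpha}$, and then $\sqrt v\,\nabla^2 v \in C^{0,\alpha}$ follows since $\sqrt v \in C^{0,\alpha}$ (as $v \ge 0$ is $C^{1,\alpha}$ with a uniform bound, $\sqrt v$ is uniformly $C^{0,1/2}$, hence $C^{0,\alpha}$ for $\alpha \le 1/2$; for $\alpha$ close to $1$ one iterates once more using that $v\,\Delta v \in C^{0,\alpha}$ directly, without dividing).

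For the second bound, differentiate \eqref{eq-ellip-v} a third time. Schematically, applying $\nabla$ to the equation $\Delta(|\nabla v|^2) = f$ and invoking the elliptic estimate gives $\|\nabla^3 v\|_{L^p} \le C \|\nabla f\|_{L^p} + \dots$, and the point is that $v\,\nabla f$ only involves terms like $v\,\nabla^2 v \cdot \nabla^2 v$, $(\sqrt v\,\nabla^2 v)(\sqrt v\,\nabla^2 v)$, $\nabla^3 v \cdot \nabla v$, $v\,\nabla^3 v\cdot\nabla v / v$, $|\nabla R|_g$-type factors, and $R\,\nabla^2 v$, $\nabla^2 R\, v$, etc., each of which is bounded after multiplication by the appropriate power of $v$ (here $v\,\nabla^3 v$ closes the loop: it appears linearly and can be absorbed using $|\nabla v|/\sqrt v \le C$). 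The cleanest route is probably to set $h := \sqrt v\,\nabla^2 v$ (now known to be in $C^{0,\alpha}$ uniformly) and derive, from the once-more-differentiated equation, a relation of the form $v\,\nabla^3 v = (\text{products of } h,\ \nabla v/\sqrt v,\ v,\ R,\ \nabla R,\ \nabla^2 R)$, all factors uniformly bounded in $L^\infty$, giving $\|v\,\nabla^3 v\|_{L^\infty} \le C$ directly without even needing $L^p$ theory.

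The main obstacle I anticipate is purely bookkeeping: correctly carrying out the third differentiation of \eqref{eq-ellip-v} on $S^2$, keeping track of the curvature (commutator) terms from the Bochner/Ricci identities on the sphere, and verifying that after multiplying through by $v$ every term genuinely carries enough powers of $v$ to be controlled by Lemmas \ref{lem-first-sec-der}, \ref{lem-first-alpha} and Shi's estimates — in particular that no term of the form $|\nabla v|^2\,\nabla^2 v / v$ or $|\nabla^2 v|^2$ (unweighted) survives. Since the equation is elliptic (the time-derivative has been eliminated via $v_t = Rv$ with $R$ and all its derivatives uniformly bounded), there is no parabolic scaling subtlety; the only real content is that the natural weights $\sqrt v$ and $v$ match the number of derivatives, which is exactly the structure already exploited in the preceding two lemmas.
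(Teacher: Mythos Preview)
Your argument has a genuine gap at the step ``$|\nabla v|^2/v \in C^{0,\alpha}$ (by Lemma~\ref{lem-first-alpha})'': that lemma only gives $|\nabla v|^2 \in C^{1,\alpha}$, and dividing by $v$ destroys the uniform H\"older bound near the vanishing set of $v_\infty$. Concretely, on the King--Rosenau solution $v = a(t) - b(t)\sin^2\psi$ with $a-b \to 0$, near the pole one has $v \approx (a-b) + b\phi^2$ and $|\nabla v|^2/v \approx 4b^2\phi^2/((a-b)+b\phi^2)$, whose $C^{0,\alpha}$ seminorm blows up like $(a-b)^{-\alpha/2}$. Hence the bootstrap $\Delta v \in C^{0,\alpha} \Rightarrow v \in C^{2,\alpha}$ fails uniformly in $t$, and the remainder of your scheme (which presupposes this) collapses. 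The same obstruction appears in your sketch for $v\,\nabla^3 v$: the terms you list do \emph{not} all carry enough powers of $v$ once you actually expand $\nabla f$.

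The paper avoids this by never dividing by a full power of $v$. It applies Schauder not to $v$ but to the powers $v^{3/2}$ and $v^2$, whose Laplacians, after substituting $v\,\Delta v = |\nabla v|^2 - 2v^2 + Rv$, read
\[
\Delta v^{3/2} = \tfrac{9}{4}\,\frac{|\nabla v|^2}{\sqrt v} - 3v^{3/2} + \tfrac{3}{2}R\sqrt v,
\qquad
\Delta v^2 = 4|\nabla v|^2 - 4v^2 + 2Rv.
\]
The point is that $|\nabla v|^2/\sqrt v = (|\nabla v|/\sqrt v)\cdot|\nabla v|$ \emph{is} uniformly $C^{0,\alpha}$: its gradient is bounded in every $L^p$ because $|\nabla(\nabla_i v\,\nabla_j v/\sqrt v)| \le C\,|\nabla^2 v|\cdot(|\nabla v|/\sqrt v) + C\,(|\nabla v|^2/v)^{3/2}$, and both factors are controlled by Lemma~\ref{lem-first-sec-der} and Corollary~\ref{cor-c1a}. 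Likewise the right-hand side of $\Delta v^2$ is uniformly $C^{1,\alpha}$ by Lemma~\ref{lem-first-alpha} and the bounds $\sqrt v\,|\nabla R| = |\nabla R|_g \le C$, $v\,|\nabla^2 R| = |\nabla^2 R|_g \le C$. Schauder then gives $v^{3/2}\in C^{2,\alpha}$ and $v^2 \in C^{3,\alpha}$ uniformly, from which $\sqrt v\,\nabla^2 v$ and $v\,\nabla^3 v$ are extracted by the product rule. The choice of exponents $3/2$ and $2$ is exactly what makes the division by $\sqrt v$ (rather than $v$) appear --- this is the idea your proposal is missing.
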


\begin{proof}

To simplify the notation we will set $\Delta := \dds$ and $\nabla := \nabla_{S^2}$. To prove the estimate
on $ \|\sqrt{v}\, \nabla^2 v \|_{C^{0,\alpha}(S^2)}$ we observe that we can rewrite (\ref{eq-ellip-v})
in the  form
\begin{equation}
\label{eq-other-form}
\Delta v^{3/2} = \frac{9|\nabla v|^2}{4\sqrt{v}} - 3v^{3/2} + \frac{3}{2}\, R\sqrt{v}.
\end{equation}
We claim that the right hand side of the previous identity has uniformly in time 
bounded $C^{0,\alpha}$ norm,  for any $0 < \alpha < 1$. To see that, observe  that for every $p \geq 1$
and for any $i,j$,  we have
\begin{equation}
\label{eq-step}
\begin{split}
\|\nabla(\frac{\nabla_i v \nabla_j v}{\sqrt{v}})\|_{L^p(S^2)}  &\le C\, \big ( 
\|\nabla^2v\|_{L^p(S^2)} \, \|\frac{|\nabla
v|}{\sqrt{v}}\|_{L^\infty(S^2)} + \| \frac{|\nabla v|^2}{v}
\|_{L^\infty(S^2)}^{3/2} \big ) \\  &\le C(p)
\end{split}
\end{equation}
and also
$$\|\nabla(R\sqrt{v})\|_ {L^\infty(S^2)} \le C$$
since
$$|\nabla(R\sqrt{v})|  \leq  | \nabla R|\, \sqrt{v} + R\, \frac{|\nabla v|}{2\sqrt{v}}
\le |\nabla R|_{g(t)} + C \le \tilde{C}.$$
All of the above inequalities hold uniformly on $t \leq  t_0 <0$. 
By the Sobolev embedding theorem we conclude that   the right hand side of (\ref{eq-other-form}) has uniformly bounded $C^{0,\alpha}$ norm, for any $\alpha <1$. Standard elliptic regularity theory applied to (\ref{eq-other-form}) implies that 
$$\|v^{3/2}\|_{C^{2,\alpha}(S^2)} \le C(\alpha)$$
which in particular yields to the estimate
$$\|\sqrt{v}\, \nabla^2 v \|_{C^{\alpha}} \le C(\alpha)\qquad \mbox{for all} \,\,\, t \in (-\infty,t_0]$$
since
$$\nabla^2_{ij}  v^{3/2} = \frac{3}{4}\, \frac{\nabla_i v \nabla_j v}{\sqrt{v}} + \frac{3}{2}\sqrt{v}\, \nabla^2_{ij} v$$
and the first term on the right hand side is in $C^{0,\alpha}$ by (\ref{eq-step}).
\medskip

To prove the second estimate, we now    rewrite (\ref{eq-ellip-v})  as
\begin{equation}
\label{eq-square}
\Delta v^2 = 4|\nabla v|^2 - 4v^2 + 2R\, v.
\end{equation}
Lemma  \ref{lem-first-alpha} implies that  
$4\, |\nabla v|^2 - 4v^2$ has uniformly in time bounded $C^{1,\alpha}$
norm. We claim the same is true for the term $Rv$. To see that,  we differentiate it  twice and use 
the inequality
$$|\nabla^2 (R\, v)| \le |\nabla^2 v|\,  R + |\nabla^2 R|\, v +
2|\nabla R|\, |\nabla v|.$$
By Lemmas  \ref{lem-first-sec-der} and \ref{lem-first-alpha} and the bounds 
$$ v\,  |\nabla^2 R| = |\nabla^2 R|_g \le C, \quad \sqrt{v}\,  |\nabla  R| = |\nabla  R|_g \le C, \quad R \leq C$$
we conclude that for all $p \geq 1$, we have 
$$\|\nabla^2 (R\, v)\|_{L^p(S^2)} \leq C(p),  \qquad \mbox{for all}  \,\,\, t \in (-\infty,t_0].$$
The Sobolev embedding theorem now implies that 
$\|R\, v\|_{C^{1,\alpha}(S^2)}$  is uniformly bounded in time, for every $\alpha < 1$. 
Standard elliptic theory applied to (\ref{eq-square}) yields to the bound
$\|v^2\|_{C^{3,\alpha}} \le C(\al)$, for all $ t\le t_0 < 0.$ In particular,
$$ \|\nabla^3 v^2 \|_{L^\infty(S_2)} \leq C$$
for  a uniform constant $C$. 
Since
$$\| v \, \nabla^3 v \|_{L^\infty(S_2)} \leq C\, \big ( \|\nabla^3 v^2 \|_{L^\infty(S_2)} +  
\| \sqrt {v} \, \nabla^2 v\|_{L^\infty(S_2)}\, \|\frac {\nabla v}{\sqrt{v}} \|_{L^\infty(S_2)} \big )$$
this readily  implies  the bound
$\|v\, \nabla^3 v \|_{L^\infty(S^2)} \le C.$

 \end{proof}


\section{Lyapunov functional and convergence}\label{sec-Lyapunov}

We introduce, in this section,  the Lyapunov functional
\begin{equation}\label{eqn-J}
J(t) = \ins \left (\frac{|\ds v|^2}{v} - 4\, v \right ) \, d a. 
\end{equation}
We will show next that $J(t)$ is non-decreasing and bounded. In the sequel, we will combine  these
properties with the a priori estimates shown in  the previous section  to 
show   that $v(\cdot,t)$ converges, as $t \to - \infty$,  in the $C^{1,\alpha}$ norm, to
a steady state solution $v_\infty$. 

\begin{lem}
\label{lem-lyapunov}
The Lyapunov functional $J(t)$ is monotone under (\ref{eqn-p1}) and in particular, we have 
\begin{equation}\label{eqn-lyap2}
\frac d{dt} J(t)  = - 2 \, \ins \frac{v_t^2}{v^2}  \,  da  -   \ins  \frac{| \ds v|^2}{v^2}   \, v_t \, da
\end{equation}
\end{lem}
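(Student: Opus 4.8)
The plan is to differentiate $J(t)$ directly, plugging in the evolution equation $v_t = v\,\dds v - |\ds v|^2 + 2v^2$ from \eqref{eqn-p1}, and then to reorganize the resulting integral into the claimed form by integration by parts on $S^2$. First I would write
$$\frac{d}{dt} J(t) = \ins \left( \frac{2\,\ds v \cdot \ds v_t}{v} - \frac{|\ds v|^2}{v^2}\, v_t - 4\, v_t \right) da,$$
which is legitimate since the a priori estimates of Section \ref{sec-apriori} (in particular Lemma \ref{lem-first-sec-der} and Corollary \ref{cor-c1a}) guarantee all integrands are uniformly bounded and the integrals converge. The only term that needs attention is the first one: I would integrate by parts to get $\ins \frac{2\,\ds v \cdot \ds v_t}{v}\, da = -2\ins v_t \, \dds\!\left(\frac{v}{?}\right)\dots$ — more precisely, $\ds\!\big(\tfrac{2\ds v}{v}\big) = \tfrac{2\nabla^2 v}{v} - \tfrac{2\,\ds v \otimes \ds v}{v^2}$, so after one integration by parts the first term becomes $-\ins v_t\left( \tfrac{2\,\dds v}{v} - \tfrac{2|\ds v|^2}{v^2}\right) da$.

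Next I would substitute $\dds v = \tfrac{v_t}{v} + \tfrac{|\ds v|^2}{v} - 2v + \dots$ — actually from \eqref{eqn-p1} rearranged, $v\,\dds v = v_t + |\ds v|^2 - 2v^2$, hence $\tfrac{\dds v}{v} = \tfrac{v_t}{v^2} + \tfrac{|\ds v|^2}{v^2} - 2$. Plugging this into the expression above, the first term contributes
$$-\ins v_t \left( \frac{2 v_t}{v^2} + \frac{2|\ds v|^2}{v^2} - 4 - \frac{2|\ds v|^2}{v^2}\right) da = -\ins v_t\left(\frac{2v_t}{v^2} - 4\right) da = -2\ins \frac{v_t^2}{v^2}\, da + 4\ins v_t\, da.$$
Combining with the remaining two terms $-\ins \tfrac{|\ds v|^2}{v^2} v_t\, da - 4\ins v_t\, da$ from the original differentiation, the $\pm 4\ins v_t\, da$ cancel and we are left with exactly
$$\frac{d}{dt} J(t) = -2\ins \frac{v_t^2}{v^2}\, da - \ins \frac{|\ds v|^2}{v^2}\, v_t\, da,$$
which is \eqref{eqn-lyap2}. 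Monotonicity then follows immediately because $v_t = Rv > 0$ on an ancient non-flat solution, so both terms on the right are non-positive.

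The main obstacle is not the algebra but \emph{justifying the integration by parts and the differentiation under the integral sign} near points where $v_\infty$ vanishes, where \eqref{eqn-p1} degenerates. I would handle this by invoking the uniform bounds from Section \ref{sec-apriori}: $\tfrac{|\ds v|^2}{v}$ is bounded (Lemma \ref{lem-first-sec-der}), $v$ and $R$ are bounded, and $v_t = Rv$ is bounded, so $\tfrac{v_t^2}{v^2} = R^2$ is bounded and $\tfrac{|\ds v|^2}{v^2}\, v_t = R\,\tfrac{|\ds v|^2}{v}$ is bounded; likewise $\tfrac{\ds v \cdot \ds v_t}{v}$ can be controlled using the higher regularity of Lemma \ref{lem-reg2} (which gives $v\,\ds^3 v$ bounded, hence $\ds v_t = \ds(Rv)$ pairs against $\tfrac{\ds v}{v}$ acceptably). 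Since $S^2$ is compact and all integrands extend to bounded functions, dominated convergence legitimizes $\tfrac{d}{dt}$ passing inside, and the integration by parts has no boundary term on the closed surface $S^2$; any division by $v$ that appears only does so in combinations that stay bounded, so the computation is valid throughout.
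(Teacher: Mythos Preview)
Your proof is correct and follows essentially the same route as the paper: differentiate $J$, integrate the cross term by parts to produce $-2\ins v_t\,\dds v/v\,da$, then substitute $\dds v/v = v_t/v^2 + |\ds v|^2/v^2 - 2$ from \eqref{eqn-p1} and collect terms. Your extra paragraph on justification is more cautious than necessary---at each fixed $t<0$ the pressure $v(\cdot,t)$ is smooth and strictly positive on the closed surface $S^2$, so differentiation under the integral and integration by parts are automatic; the degeneracy of $v_\infty$ only enters in the limit $t\to-\infty$, which is not used here.
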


\begin{proof}
To simplify the notation we will set $\Delta := \dds$ and $\nabla := \nabla_{S^2}$.
Equation \eqref{eqn-p1} and a direct calculation show:
\begin{eqnarray*}
\frac d{dt} \ins \frac{|\nabla v|^2}{v} \, da &=& 2\, \ins   \frac{\nabla v \, \nabla v_t }{v} \, da - \ins   \frac{|\nabla v|^2}{v^2}\, v_t \, da\\
&=&
- 2\, \ins \frac{\Delta v}{v} \,  v_t \, da + \ins   \frac{|\nabla v|^2}{v^2}\, v_t \, da\\
&=& - 2 \, \ins ( \frac{v_t }{v^2} + \frac{|\nabla v|^2}{v^2} - 2)  \,  v_t \, da
+ \ins   \frac{|\nabla v|^2}{v^2}\, v_t \, da \\
&=& - 2 \, \ins \frac{v_t^2}{v^2}  \,  da  -   \ins  \frac{| \nabla v|^2}{v^2}   \, v_t \, da + 4\, \ins  v_t \, da.
\end{eqnarray*}
We then conclude that
\begin{equation}\label{eqn-lyap1}
\frac d{dt} \ins \left (\frac{|\nabla v|^2}{v}-  4 v \right ) \, d a = - 2 \, \ins \frac{v_t^2}{v^2}  \,  
da  -   \ins  \frac{| \nabla v|^2}{v^2}   \, v_t \, da
\end{equation}
that is, 
\begin{equation}\label{eqn-lyap3}
\frac d{dt} J(t)  = - 2 \, \ins \frac{v_t^2}{v^2}  \,  da  -   \ins  \frac{| \nabla v|^2}{v^2}   \, v_t \, da
\end{equation}
where both terms on the right hand side of \eqref{eqn-lyap3} are
nonpositive, since  on an ancient solution of \eqref{eqn-p1} we have 
$v_t \geq 0$.

\end{proof}

As an immediate consequence of the estimate in Lemma \ref{lem-first-sec-der}
and the inequality $$v\, \Delta v - |\nabla v|^2 + 2\, v^2 \geq 0$$   we have:

\begin{lem}
\label{lem-lyap-bound}
There exists a uniform constant $C$ so that 
$$-C \le J(t) \le 0\qquad \mbox{for all} \,\,\, -\infty < t \leq t_0 <0.$$
\end{lem}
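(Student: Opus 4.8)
The statement follows quickly by combining the elliptic identity \eqref{eq-ellip-v}, positivity of $R$ and $v$, and the first-order bound of Lemma \ref{lem-first-sec-der}. I would split the argument into the upper bound and the lower bound.

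\emph{Upper bound.} From the elliptic equation \eqref{eq-ellip-v} we have $v\,\dds v - |\ds v|^2 + 2v^2 = R\,v$, and since the solution is ancient and non-flat, $R>0$, whence $v\,\dds v - |\ds v|^2 + 2v^2 \ge 0$; dividing by $v>0$ gives $\tfrac{|\ds v|^2}{v}\le \dds v + 2v$. Substituting into the definition \eqref{eqn-J} of $J$ yields
\begin{equation*}
J(t) = \ins\left(\frac{|\ds v|^2}{v} - 4v\right)da \le \ins\left(\dds v - 2v\right)da = -2\ins v\, da \le 0,
\end{equation*}
where $\ins \dds v\, da = 0$ by the divergence theorem on $S^2$ and $\ins v\, da\ge 0$ since $v>0$.

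\emph{Lower bound.} Here I would use that $\tfrac{|\ds v|^2}{v}\ge 0$ together with the uniform-in-time bound $v(\cdot,t)\le C(t_0)$ from \eqref{eqn-bv} (equivalently, the bound on $|\ds v|^2/v$ in Lemma \ref{lem-first-sec-der} is not even needed for this half). Then
\begin{equation*}
J(t) = \ins\left(\frac{|\ds v|^2}{v} - 4v\right)da \ge -4\ins v\, da \ge -4\, C(t_0)\, |S^2| =: -C,
\end{equation*}
which is a constant independent of $t\in(-\infty,t_0]$.

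\textbf{Main obstacle.} There is no real obstacle: every ingredient (the elliptic identity \eqref{eq-ellip-v}, the sign of $R$, the bound \eqref{eqn-bv}, and Lemma \ref{lem-first-sec-der}) is already available, so the proof is a short assembly. The only point requiring a moment's care is that the upper bound uses the identity \eqref{eq-ellip-v} and $R>0$ rather than Lemma \ref{lem-first-sec-der}, while the lower bound uses $v>0$ and the uniform bound on $v$; one should also note the cancellation $\ins \dds v\, da = 0$ so that no derivative term survives in the final estimate.
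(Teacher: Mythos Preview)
Your proof is correct and essentially follows the paper's approach: the upper bound comes from the inequality $v\,\dds v - |\ds v|^2 + 2v^2 \ge 0$ exactly as the paper indicates, and the lower bound is immediate from the uniform bounds already established. The only minor difference is that the paper cites Lemma~\ref{lem-first-sec-der} for the lower bound (using the uniform bound on $|\ds v|^2/v$), whereas you observe that the cruder estimate $|\ds v|^2/v \ge 0$ together with \eqref{eqn-bv} already suffices; this is a small but valid simplification.
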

\medskip

We will next use Lemma \ref{lem-lyapunov} to show that on our ancient solution
the backward in time limit 
$$R_\infty= \lim_{t \to -\infty} R(\cdot, t)$$
of the scalar curvature $R$ is equal to zero almost everywhere on $S^2$.

\begin{lem}\label{lem-R} On an ancient solution $v$ of equation \eqref{eqn-p1}, 
we have $R_\infty =0$  a.e. on  $S^2$. 
\end{lem}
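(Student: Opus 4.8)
The plan is to exploit the monotonicity formula for the Lyapunov functional $J(t)$ established in Lemma~\ref{lem-lyapunov}. Since $J$ is non-decreasing (both terms on the right of \eqref{eqn-lyap2} are nonpositive because $v_t\ge 0$) and bounded below by Lemma~\ref{lem-lyap-bound}, the total variation of $J$ over $(-\infty,t_0]$ is finite; equivalently,
\[
\int_{-\infty}^{t_0}\!\!\ins \frac{v_t^2}{v^2}\,da\,dt <\infty
\qquad\text{and}\qquad
\int_{-\infty}^{t_0}\!\!\ins \frac{|\ds v|^2}{v^2}\,v_t\,da\,dt<\infty .
\]
In particular $\int_{-\infty}^{t_0}\ins R^2\,da\,dt = \int_{-\infty}^{t_0}\ins \frac{v_t^2}{v^2}\,da\,dt<\infty$, since $R=v_t/v$. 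The first idea is that finiteness of this time integral forces $\ins R^2\,da$ to have a sequence of times $t_j\to-\infty$ along which it tends to zero.

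Next I would upgrade this sequential statement to a genuine a.e. limit. Recall from Section~\ref{sec-apriori} that $R_t\ge 0$, so $R(\cdot,t)$ is monotone decreasing as $t\to-\infty$ and the pointwise limit $R_\infty=\lim_{t\to-\infty}R(\cdot,t)$ exists (and is bounded, by \eqref{eqn-bR}). Monotone convergence then gives $\ins R_\infty^2\,da=\lim_{t\to-\infty}\ins R(\cdot,t)^2\,da$ — note the full limit exists because $t\mapsto\ins R^2\,da$ is itself monotone, being the $da$-integral of the monotone family $R^2$. Combining with the sequence $t_j$ along which this monotone quantity goes to $0$, we conclude $\ins R_\infty^2\,da=0$, hence $R_\infty=0$ a.e.\ on $S^2$. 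A technically cleaner route to the same conclusion avoids worrying about whether $\ins R^2\,da$ is monotone: the Harnack-type bound $R(\cdot,t)\le R(\cdot,t_0)\le C$ makes the family $R(\cdot,t)^2$ dominated, so $\int_{-\infty}^{t_0}\ins R^2\,da\,dt<\infty$ together with $t\mapsto R(\cdot,t)^2$ being increasing as $t$ decreases forces the integrand to be integrable in $t$ near $-\infty$, which is impossible unless $R_\infty(x)=0$ for a.e.\ $x$; indeed if $R_\infty(x)\ge\delta>0$ on a positive-measure set $E$, then $\ins R^2\,da\ge \delta^2|E|$ for all $t\le t_0$, contradicting finiteness of the time integral.

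Concretely, the steps in order: (i) integrate \eqref{eqn-lyap2} in $t$ over $(-\infty,t_0]$ and use Lemma~\ref{lem-lyap-bound} to get $\int_{-\infty}^{t_0}\ins \frac{v_t^2}{v^2}\,da\,dt\le \tfrac12\big(J(t_0)-\lim_{t\to-\infty}J(t)\big)<\infty$; (ii) rewrite the integrand as $R^2$ using $R=v_t/v$; (iii) invoke $R_t\ge0$ to deduce $R(\cdot,t)\nearrow$ pointwise as $t$ increases, equivalently $R(\cdot,t)\searrow R_\infty$ as $t\to-\infty$, with $0\le R_\infty\le R(\cdot,t)\le C$; (iv) observe that $\ins R(\cdot,t)^2\,da\ge \ins R_\infty^2\,da$ for every $t\le t_0$, so if $\ins R_\infty^2\,da>0$ the time integral in (i) would diverge, a contradiction; hence $\ins R_\infty^2\,da=0$ and $R_\infty=0$ a.e. The main obstacle — really the only delicate point — is making sure the limit $\lim_{t\to-\infty}J(t)$ exists and is finite so that step (i) is legitimate; this is exactly what Lemmas~\ref{lem-lyapunov} and \ref{lem-lyap-bound} provide ($J$ monotone and bounded), so the argument goes through cleanly.
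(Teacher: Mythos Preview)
Your argument is correct and follows essentially the same route as the paper: integrate the monotonicity identity \eqref{eqn-lyap2} and use the boundedness of $J$ from Lemma~\ref{lem-lyap-bound} to conclude $\int_{-\infty}^{t_0}\ins R^2\,da\,dt<\infty$, then combine with $R_t\ge 0$ (hence $\ins R^2\,da\ge \ins R_\infty^2\,da$ for all $t$) to force $\ins R_\infty^2\,da=0$. One small slip: since both terms on the right of \eqref{eqn-lyap2} are nonpositive, $J$ is non-\emph{increasing}, not non-decreasing (the paper's own introductory sentence contains the same typo); this does not affect your argument because Lemma~\ref{lem-lyap-bound} bounds $J$ on both sides, so the limit $\lim_{t\to-\infty}J(t)$ exists and is finite either way, and your step~(i) should read $\tfrac12\big(\lim_{t\to-\infty}J(t)-J(t_0)\big)$.
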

\begin{proof} 
It is   enough to show that $$\ins R_\infty^2  \, da =0.$$
Indeed, assume the opposite, namely that $\ins R_\infty^2  \, da := c >0$.
Then, since $R_t \geq 0$,  we  have $\ins  R^2(\cdot,t)  \, da \geq c$,
i.e., 
$$\ins \frac{v_t^2}{v^2}\, da \geq c.$$
Integrating \eqref{eqn-lyap2} in time while  using the above inequality and the fact that
$v_t \geq 0$, gives 
$$J(t_2) - J(t_1) \leq - \int_{t_1}^{t_2} \ins \frac{v_t^2}{v^2}\, da \, dt \leq - c\, (t_2 - t_1)$$
for every $-\infty < t_1 < t_2 < t_0<0$. This obviously contradicts the uniform bound
$-C \leq J(t) \leq 0$  shown in Lemma \ref{lem-lyap-bound}.
\end{proof}

We will now combine some of the a priori estimates of the previous section with Lemma \ref{lem-R}
to prove the following convergence  result. 

\begin{prop}\label{prop-conv} The solution $v(\cdot,t)$ of \eqref{eqn-p1} converges, as $t \to -\infty$, 
to a limit $v_\infty \in C^{1,\alpha}(S^2)$, for any $\alpha <1$. Moreover,  $\|v_\infty \, \ds^2  v_\infty \|_{C^\alpha(S^2)} 
< \infty$, for all $\alpha <1$,  and $v_\infty$  satisfies the steady state equation
\begin{equation}\label{eqn-tildev}
v_\infty\, \dds v_\infty - |\ds v_\infty|^2 + 2\, v_\infty^2 =0.
\end{equation}
\end{prop}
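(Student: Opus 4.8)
The plan is to extract a convergent subsequence using the a priori estimates of Section \ref{sec-apriori} and then upgrade subsequential convergence to full convergence using the Lyapunov functional together with Lemma \ref{lem-R}. First I would recall that $v_t = Rv \ge 0$, so the pointwise limit $v_\infty(\psi,\theta) = \lim_{t\to -\infty} v(\cdot,t)$ exists and is finite by \eqref{eqn-bv}. By Corollary \ref{cor-c1a} the family $\{v(\cdot,t)\}_{t\le t_0}$ is bounded in $C^{1,\alpha}(S^2)$ for every $\alpha <1$, so by Arzel\`a--Ascoli the pointwise convergence actually takes place in $C^{1,\beta}(S^2)$ for every $\beta <1$ (interpolating between two H\"older exponents), and the limit $v_\infty$ lies in $C^{1,\alpha}(S^2)$ for all $\alpha <1$. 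Moreover Lemma \ref{lem-reg2} gives a uniform-in-time bound on $\|\sqrt{v}\,\ds^2 v\|_{C^{0,\alpha}}$, hence on $\|v\,\ds^2 v\|_{C^{0,\alpha}}$; passing to the limit yields $\|v_\infty\,\ds^2 v_\infty\|_{C^\alpha(S^2)} < \infty$.

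\textbf{Deriving the steady state equation.} The key point is that $v_\infty$ must satisfy \eqref{eqn-tildev}. Here I would use Lemma \ref{lem-R}: since $R_\infty = 0$ a.e.\ on $S^2$ and $R = v_t/v$, we have $v_t \to 0$ in an a.e.\ sense along $t\to -\infty$. Combined with the elliptic equation \eqref{eq-ellip-v}, namely $v\,\dds v - |\ds v|^2 + 2v^2 = R\,v$, and the fact that $Rv \le C R \to 0$ a.e., we obtain that the left-hand side tends to $0$ a.e. Since all terms on the left converge (the term $v\,\dds v$ because $v\to v_\infty$ in $C^0$ and $v\,\ds^2 v$ is bounded in $C^\alpha$ so converges along a subsequence; the term $|\ds v|^2$ because $v\to v_\infty$ in $C^1$; the term $2v^2$ likewise), we conclude
\begin{equation*}
v_\infty\, \dds v_\infty - |\ds v_\infty|^2 + 2\, v_\infty^2 = 0 \qquad \text{a.e. on } S^2,
\end{equation*}
and by continuity of all terms (using the $C^\alpha$ bound on $v_\infty\,\ds^2 v_\infty$ to make sense of $v_\infty\,\dds v_\infty$), the identity holds everywhere on $S^2$.

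\textbf{Full convergence vs. subsequential convergence.} The main obstacle I anticipate is that the pointwise limit $v_\infty$ exists as a function, but a priori one only gets convergence of $\ds v$ and $\ds^2 v$ along subsequences $t_j \to -\infty$; one must rule out that different subsequences give different limits. This is handled by monotonicity: since $v(\cdot,t)$ is pointwise monotone in $t$ and bounded in $C^{1,\alpha}$ uniformly, Dini-type reasoning (or the fact that a monotone family with equicontinuous $C^{1,\alpha}$ bounds has a unique $C^1$ limit) forces $v(\cdot,t) \to v_\infty$ in $C^1(S^2)$ without passing to a subsequence; the limit $v_\infty$ is the same for every sequence because the pointwise limit is unique. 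For the second derivatives one argues that any subsequential $C^{1,\alpha}$-limit of $v(\cdot,t_j)$ must equal $v_\infty$ and must solve \eqref{eqn-tildev}, so the convergence $\sqrt{v}\,\ds^2 v \to \sqrt{v_\infty}\,\ds^2 v_\infty$ holds in $C^0$ (indeed in $C^{0,\beta}$ for $\beta<\alpha$) along the full family. A cleaner route, which I would adopt if the bookkeeping becomes heavy, is to note that the monotonicity of $J(t)$ together with Lemma \ref{lem-R} and $\int_{-\infty}^{t_0}\ins \frac{v_t^2}{v^2}\,da\,dt < \infty$ (from integrating \eqref{eqn-lyap2}) gives $\ins \frac{v_t^2}{v^2}\,da \to 0$ along a sequence, which combined with the elliptic estimates pins down $v_\infty$ and its regularity directly.
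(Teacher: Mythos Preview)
Your proposal is correct and follows essentially the same approach as the paper: use $v_t\ge 0$ to get a pointwise limit, invoke the uniform $C^{1,\alpha}$ and $\|\sqrt{v}\,\ds^2 v\|_{C^{0,\alpha}}$ bounds (Corollary~\ref{cor-c1a}, Lemma~\ref{lem-reg2}) for compactness, identify every subsequential limit with the unique pointwise limit $v_\infty$ to upgrade to full convergence, and then pass to the limit in $v\,\dds v-|\ds v|^2+2v^2=Rv$ using Lemma~\ref{lem-R}. The paper's write-up is terser but the logic is the same; your ``cleaner route'' via the integrated Lyapunov identity is essentially the proof of Lemma~\ref{lem-R} itself and is not needed separately.
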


\begin{proof}
Since $v_t \ge 0$ and $v > 0$, the  pointwise limit 
$$v_\infty:=\lim_{t\to -\infty}v(\cdot,t)$$
exists.  
Lemmas \ref{lem-first-alpha} and \ref{lem-reg2}
ensure  that for every $\al <1$ and every sequence $t_i\to -\infty$,  along a subsequence still denoted by $t_i$, we have $v(\cdot,t_i)\stackrel{C^{1,\alpha}(S^2)}{\longrightarrow}
\tilde {v}$ and $v\, \ds^2 v(\cdot,t_i) \stackrel{C^{\alpha}(S^2)}{\longrightarrow}
\tilde {v}  \ds^2 \tilde  v$. By the uniqueness of the limit, $\tilde{v} =
v_\infty$, which means that for every $\al <1$, we have
\begin{equation*}
\label{eq-un-lim}
v(\cdot,t) \stackrel{C^{1,\alpha}(S^2)}{\longrightarrow}
v_\infty \quad \mbox{and} \quad  (v\, \ds^2 v)(\cdot,t) \stackrel{C^{\alpha}(S^2)}{\longrightarrow}
v_\infty  \ds^2 v_\infty, \quad  \mbox{as} \,\,\, t\to -\infty.
\end{equation*}
We can now let $t \to -\infty$ in equation
$$v\, \dds v - |\ds v|^2 + 2\, v^2 = R\, v$$
and use Lemma \ref{lem-R} to conclude that $v_\infty$ satisfies equation \eqref{eqn-tildev}. 

\end{proof}


\section{The backward limit}
\label{sec-limit}

In this section we will  classify all the backward limits  $v_\infty=\lim_{t \to -\infty} v(\cdot,t)$ proving
\begin{thm}
\label{prop-limit}
There exists a conformal change of $S^2$
in which the limit 
$$v_\infty(\psi,\theta):= \lim_{t\to -\infty} v(\psi,\theta,t) = \mu \,  \cos^2\psi$$
for some constant $\mu  \ge 0$, where $\psi,\theta$ denote  global
coordinates on a conformally changed sphere. Moreover, the convergence
is in $C^{1,\alpha}$ on $S^2$, for any $0 < \alpha <1$,  and in  $C^\infty$  on every 
compact subset of $S^2\backslash \{S,N\}$, where $S,N$ denote  the south, north poles of  $S^2$ respectively  (the points that correspond to $\psi = \pm
\frac{\pi}{2}$).
\end{thm}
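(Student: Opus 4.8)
The plan is to start from the steady state equation \eqref{eqn-tildev} for $v_\infty$, which by Proposition \ref{prop-conv} holds on all of $S^2$ with $v_\infty \in C^{1,\alpha}$ and $v_\infty \, \ds^2 v_\infty \in C^\alpha$. The first step is to pass to a more convenient coordinate system: since \eqref{eqn-tildev} is, after dividing by $v_\infty^2$ where $v_\infty > 0$, exactly the equation $\dds \log v_\infty + 2 = 0$ (equivalently $\dds u_\infty = 2$ with $u_\infty = v_\infty^{-1}$), I would use the cylindrical picture \eqref{eqn-hatu}, where the corresponding stationary equation for $\hat v_\infty = v_\infty \cos^2\psi$ reads $\hat v_\infty \, \Delta_c \hat v_\infty - |\nabla \hat v_\infty|^2 = 0$, i.e. $\Delta_c \log \hat v_\infty = 0$ on the open set where $\hat v_\infty > 0$. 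In the flat/planar picture \eqref{eqn-baru}, the equation becomes $\Delta \log \bar v_\infty = 0$, i.e. $\bar u_\infty$ is harmonic, on $\{\bar v_\infty > 0\}$; a stationary positive harmonic conformal factor on a planar domain is the classical situation, and the bounded ancient solutions (cigars/cylinders/flat) should force $\log \hat v_\infty$ to be an affine function of $s$ (using the $\theta$-periodicity to kill the angular dependence and the one-sided growth bounds coming from \eqref{eqn-bv} to control the $s$-dependence). This would give $\hat v_\infty = c_1 e^{\alpha s} + \dots$ type behavior; tracing back through \eqref{eqn-uU} this is precisely $v_\infty = \mu \cos^2\psi$ (the $\alpha = 0$ case, possibly after a conformal rotation of the sphere that moves the two ``ends'' $s \to \pm\infty$).

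The second step is to handle the zero set $Z = \{v_\infty = 0\}$ rigorously, which is where the conformal freedom gets used. From $v_\infty \ge 0$ and the regularity $v_\infty \in C^{1,\alpha}$, at any point of $Z$ both $v_\infty$ and $\ds v_\infty$ vanish. Plugging a point of $Z$ into \eqref{eqn-tildev} is automatic, so the information must come from a blow-up or a maximum-principle argument: I would show that either $v_\infty \equiv 0$ (giving $\mu = 0$), or $Z$ consists of exactly two points and near each of them $v_\infty$ behaves like a nondegenerate quadratic vanishing, which after a conformal change placing these points at $\psi = \pm\pi/2$ matches $\mu \cos^2\psi$. The key structural fact to exploit is that on $S^2 \setminus Z$ the function $w = \log v_\infty$ satisfies $\dds w = -2$, so $w$ is (minus twice) a Green's-type function: $\dds(v_\infty^{-1}) = 2 \cdot \frac{|\ds v_\infty|^2}{v_\infty^3} \cdot (\text{something}) $—more precisely $u_\infty = v_\infty^{-1}$ satisfies $\dds u_\infty = 2$ away from the poles of $u_\infty$, so $u_\infty - (\text{a fixed particular solution of } \dds = 2)$ is harmonic on $S^2 \setminus Z$ with prescribed singularities at $Z$. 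Since harmonic functions on $S^2$ minus a finite set with at-worst-logarithmic/pole singularities are completely classified, and $u_\infty \to +\infty$ exactly at $Z$, one reads off that $|Z| \le 2$ and the precise local model.

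The third step is to upgrade the convergence to $C^\infty$ on compact subsets of $S^2 \setminus \{S, N\}$. On such a compact set $K$, Proposition \ref{prop-conv} gives $v_\infty \ge \delta_K > 0$ (once we know $Z \subseteq \{S,N\}$), hence for $t$ sufficiently negative $v(\cdot, t) \ge \delta_K/2$ on $K$, so equation \eqref{eqn-p1} is uniformly parabolic on $K$ for all large $-t$; combined with the uniform-in-time bounds of Section \ref{sec-apriori} (in particular Corollary \ref{cor-c1a} and Lemma \ref{lem-reg2}), standard interior parabolic Schauder estimates bootstrap to uniform $C^\infty(K)$ bounds, and then the already-established $C^{1,\alpha}$ convergence plus interpolation/Arzelà–Ascoli promotes this to $C^\infty(K)$ convergence to $v_\infty$. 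Passing to the limit in \eqref{eqn-p1} on $K$ reconfirms $v_\infty$ solves \eqref{eqn-tildev} there, and since $v_\infty = \mu\cos^2\psi$ is smooth across the poles too, this is consistent.

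\textbf{Main obstacle.} I expect the hard part to be Step 2: ruling out a ``bad'' zero set (e.g. $Z$ a point, a circle, or a more complicated closed set) and extracting the precise quadratic vanishing at the two surviving zeros, all while only having $v_\infty \in C^{1,\alpha}$ rather than smoothness near $Z$ — the degeneracy of \eqref{eqn-p1} at $Z$ is exactly the mechanism that forbids naive elliptic regularity there. The natural tool is to work with $\bar u_\infty = \bar v_\infty^{-1}$, which is a positive harmonic function on the punctured plane $\{\bar v_\infty > 0\}$ with controlled growth; classifying such functions (Bôcher-type theorem at the punctures, plus the one-sided bound from \eqref{eqn-bv} near infinity) pins down $\bar u_\infty = |z - z_0|^{-2}$-type or the flat/cylindrical profiles, and translating back through the stereographic/Mercator dictionary in the ``Change of variables'' subsection yields $\mu \cos^2\psi$ in the right conformal gauge.
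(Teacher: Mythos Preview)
Your three-step outline is structurally sound, and Step 3 (parabolic bootstrapping away from $Z$) is exactly what the paper does. But two genuine gaps remain.

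\textbf{Step 2 is circular as written.} You propose to control the zero set $Z$ by classifying positive harmonic functions on $S^2 \setminus Z$ via a B\^ocher-type theorem. But B\^ocher requires the singular set to be discrete \emph{a priori}; you have no such information, since at this stage $Z$ is merely a closed subset of $S^2$ and could in principle be a circle or worse. (Also, the line ``$\dds u_\infty = 2$'' is false: from $\dds \log v_\infty = -2$ one gets $\dds u_\infty = u_\infty(2 + |\ds \log u_\infty|^2)$, not a Poisson equation with constant right-hand side.) The paper takes a completely different route here: it works in planar coordinates with $-\Delta \log \bar u = R\,\bar u$ and invokes the Brezis--Merle concentration estimate. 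If $v_\infty$ vanished at three distinct points, then near each of them the curvature mass $\int R\,\bar u$ would have to exceed $4\pi - 2\delta$ (otherwise Brezis--Merle gives a uniform $L^\infty$ bound on $\bar u$, contradicting $u \to \infty$ there), and three disjoint such balls would force total curvature $> 8\pi$, violating Gauss--Bonnet. This is the missing idea in your proposal.

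\textbf{Step 1 does not pin down $\lambda = 0$.} Once $Z \subset \{S,N\}$, your argument correctly yields $\hat v_\infty = \mu\, e^{\lambda s}$ for some $\lambda$. But a conformal automorphism of $S^2$ fixing the poles acts on the cylinder by $s \mapsto s + c$, which rescales $\mu$ and does \emph{not} change $\lambda$; so ``a conformal rotation that moves the ends'' cannot set $\lambda = 0$. The bound \eqref{eqn-bv} only gives $|\lambda| \le 2$. The paper uses the first-order estimate $|\ds v|^2 \le C v$ from Lemma \ref{lem-first-sec-der}, which in cylindrical coordinates forces $\lambda \in \{0, \pm 2\}$; the case $\lambda = \pm 2$ corresponds to $\bar u_\infty \equiv \text{const}$ in planar coordinates (a flat-plane limit), and this is ruled out by a separate geometric argument (Lemma \ref{prop-plane}, proved via the isoperimetric machinery of Section \ref{section-sphere}). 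Your proposal does not address this case at all.
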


We have shown in the previous section that  $v(\cdot,t)\stackrel{C^{1,\alpha}(S^2)}{\longrightarrow} v_\infty$,
for any $\alpha \in (0,1)$,  where $v_\infty$ is a weak solution of  the steady state equation
$$v_\infty\, \Delta v_\infty - |\nabla v_\infty|^2 + 2\, v_\infty^2 = 0.$$
To classify  all the backward limits $v_\infty$, we will need the following Proposition  which constitutes
the main step in the proof of Theorem \ref{prop-limit}.

\begin{prop} \label{lem-zero}
The limit $v_\infty$ is either identically equal to zero,  or it has at most   two zeros.
\end{prop}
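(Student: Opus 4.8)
The plan is to study the steady state equation $v_\infty\,\Delta v_\infty-|\nabla v_\infty|^2+2v_\infty^2=0$ directly, exploiting the fact (from Proposition \ref{prop-conv}) that $v_\infty\in C^{1,\alpha}(S^2)$ and $v_\infty\,\Delta v_\infty\in C^\alpha(S^2)$, so $v_\infty\ge 0$ is a genuine (weak) solution with enough regularity to apply maximum-principle and unique-continuation arguments on the open set $\Omega:=\{v_\infty>0\}$. On $\Omega$ the equation can be divided by $v_\infty$ and rewritten, e.g. dividing by $v_\infty^2$, as $\Delta\log v_\infty = |\nabla\log v_\infty|^2 - \Delta v_\infty/v_\infty \cdot(\text{\dots})$; more usefully, the substitution $w=\sqrt{v_\infty}$ (or $v_\infty^{3/2}$, as in \eqref{eq-other-form}) turns it into a semilinear equation $\Delta w = c\,w - (\text{good terms})$ with bounded coefficients on $\Omega$, so $w$ is smooth in $\Omega$ and, near a boundary point where $v_\infty=0$, one gets precise asymptotics: $v_\infty$ vanishes to exactly second order (like $\dist(\cdot,\partial\Omega)^2$), which forces $\partial\Omega$ to be a smooth curve or isolated points. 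In fact, since $v_\infty$ and $\nabla v_\infty$ both vanish at a zero (because $|\nabla v_\infty|^2\le C v_\infty$ from Lemma \ref{lem-first-sec-der} passes to the limit), every zero is a zero of order $\ge 2$.

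Next I would show that the zero set, if nonempty and not all of $S^2$, consists of isolated points, and then count them. First, $v_\infty\not\equiv$ a positive constant unless that constant works in the equation, which forces $v_\infty\equiv 0$ on account of the $2v_\infty^2$ term; so if $v_\infty$ is not identically zero it is nonconstant, hence by the maximum principle on the compact surface $S^2$ it must attain the value $0$ somewhere (a positive function with $\Delta\log v_\infty = -|\nabla\log v_\infty|^2 - 2 < 0$, i.e. $\log v_\infty$ superharmonic, cannot have an interior minimum, contradiction unless $v_\infty$ touches zero). So the zero set is nonempty. To see it is finite: near each zero $p$, writing $v_\infty$ in a conformal chart as a function on a disc solving an equation with $v_\infty^{-1}$ coefficients only where $v_\infty>0$, one uses the order-two vanishing and an ODE/blow-up analysis of the linearized problem to conclude $v_\infty(x)\sim \tfrac12\,x^TAx$ near $p$ with $A\ge 0$, $A\ne 0$; if $A>0$ then $p$ is isolated, if $A$ is degenerate one reruns the argument one order higher. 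Combined with compactness of $S^2$ the zero set is then finite.

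Finally, the count. Here I expect to use a topological/analytic input: on $S^2\setminus\{\text{zeros}\}$ the function $\log v_\infty$ is a smooth superharmonic function going to $-\infty$ at each puncture, and the metric $g_\infty=v_\infty^{-1}g_{S^2}$ is a complete metric on the punctured sphere with curvature $R_\infty=0$ (by Lemma \ref{lem-R}, and directly: $R_\infty = v_\infty(\Delta_{S^2}\log v_\infty + 2)=0$ on $\Omega$ by the steady-state equation). Thus $g_\infty$ is a \emph{flat} complete metric on $S^2$ minus $k$ points; each puncture is a cylindrical end (since $v_\infty$ vanishes to order $2$, $g_\infty$ is asymptotically a flat cylinder there — consistent with the limit being the cylindrical metric $\mu\cos^2\psi$). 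A complete flat surface diffeomorphic to a sphere with $k$ punctures and with all ends cylindrical has zero Euler characteristic counted with ends: $\chi(S^2) = 2$, and Gauss–Bonnet with $k$ cylindrical (zero-angle-deficit, finite-geometry) ends forces $2 - k\cdot 1 = \int_{}R_\infty/(4\pi) + (\text{end contributions}) = 0$, giving $k=2$. The case $k=0$ is $v_\infty\equiv 0$ handled separately; $k=1$ is excluded because a single cylindrical end cannot close up a sphere (Euler characteristic parity / a flat cylinder has two ends); and $k\ge 3$ is excluded by the same Gauss–Bonnet count. Hence $v_\infty$ either vanishes identically or has exactly — in particular at most — two zeros.

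The main obstacle is the regularity/asymptotic analysis at the zeros: establishing that $v_\infty$ vanishes to \emph{exactly} second order with a nondegenerate Hessian (so the zeros are isolated and the ends are genuinely cylindrical rather than cusps), since the equation degenerates precisely there and the a priori estimates of Section \ref{sec-apriori} only give $|\nabla v_\infty|^2\le C v_\infty$ and $|v_\infty\,\nabla^2 v_\infty|\le C$, which a priori allow faster vanishing. Getting the sharp second-order behavior — presumably via the $v_\infty^2$ and $v_\infty^{3/2}$ reformulations \eqref{eq-square}, \eqref{eq-other-form} and a Łojasiewicz- or ODE-type argument on the blow-up — is where the real work lies; once that is in hand, the finiteness of the zero set and the Gauss–Bonnet count that pins it at two are comparatively routine.
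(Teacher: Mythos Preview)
Your approach is genuinely different from the paper's, and the gap you yourself flag at the end is real and is not easily filled with the tools at hand.

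The paper does \emph{not} analyze the steady-state equation for $v_\infty$ directly, nor does it attempt to show that the zeros are isolated. Instead it works with the evolving conformal factor $\bar u(\cdot,t)$ in stereographic coordinates and runs a Brezis--Merle concentration argument. Assuming three distinct zeros $p_1,p_2,p_3$ of $v_\infty$, one first places two points where $v_\infty>0$ at the poles (so that Lemma~\ref{lem-l1b} gives a uniform-in-time $L^1$ bound on $(\log\bar u)^+$); at each image point $x_i$ in the plane one then has $\bar u(x_i,t)\to\infty$. The key estimate (Lemma~\ref{lem-bm}, via Theorem~\ref{thm-bm}) says that if $\int_{B_\rho(x_i)} R\,\bar u\,dx < 4\pi-2\delta$ at some time, then $\bar u$ is uniformly bounded on $B_{\rho/4}(x_i)$, contradicting blow-up. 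Hence along a sequence $t_k\to-\infty$ each of the three disjoint balls carries curvature mass at least $4\pi-2\delta$, forcing total curvature $>8\pi$ and violating Gauss--Bonnet for $g(t_k)$.

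Your Cohn--Vossen idea---$g_\infty=v_\infty^{-1}g_{S^2}$ is flat and complete on $S^2\setminus Z$, so $0\le 2\pi\,\chi(S^2\setminus Z)=2\pi(2-|Z|)$---is correct \emph{once} $Z$ is known to be finite, and would even give $|Z|\le 2$ without further work. But establishing finiteness is precisely where your argument is only a sketch: the available regularity ($v_\infty\in C^{1,\alpha}$, $v_\infty\,\nabla_{S^2}^2 v_\infty\in C^\alpha$) does not by itself produce a nondegenerate Hessian at a zero, and ``rerun one order higher'' is not a proof for this degenerate equation. The paper's route sidesteps all of this because it needs only three \emph{points} in $Z$, no structure on $Z$, and it exploits the full ancient solution (monotonicity, curvature bounds, the $L^1$ control of Lemma~\ref{lem-l1b}) rather than the limit $v_\infty$ alone. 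A minor correction: on $\{v_\infty>0\}$ the steady-state equation gives $\Delta_{S^2}\log v_\infty=-2$, not $-|\nabla\log v_\infty|^2-2$; this does not affect your superharmonicity claim but is worth fixing.
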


For a fixed $t_0 <0$, the conformal factor $u$ of our evolving metric on $S^2$ is uniformly bounded from above
and below away from zero. Set   $$m(t_0):=\inf_{z \in S^2} u(z,t_0).$$

Assume that  $v_\infty$ is not  identically equal to zero. Then, since $v_\infty$ is a continuous 
function,   there exist two  points $P_1, P_2 \in S^2$ such that 
$\lim_{t \to \infty} v(P_i,t) >0$, $i=1,2$, or equivalently  $\lim_{t \to \infty} u(P_1,t) <\infty$,
$i=1,2$. By performing a conformal change of coordinates, we may assume that 
$P_1$ is the  south pole $S$ and $P_2$ is   the north pole $N$ of the background sphere 
$S^2$.  Let
$\psi, \theta$ be global coordinates on $S^2$, where $\psi =
\frac{\pi}{2}$ and $\psi = -\frac{\pi}{2}$ correspond to the poles
(denote them by $S$ and $N$). Denote by 
$\bar u$ the conformal factor  of our metric
in plane coordinates, after the stereographic  projection that maps $S$  to the origin. It follows that  
$$\lim_{t \to -\infty}  \bar u (0,t) = \bar u_\infty  (0)< \infty.$$
We have seen that $\bar u$ satisfies equation \eqref{eqn-baru}.   We will show:

\begin{lem}\label{lem-l1b}  Given any $r_0 >0$ and $t_0 <0$, there exists a uniform in time constant $C(r_0)$ which  also depends on 
$\bar u_\infty  (0)$ and $m(t_0)$  such that
\begin{equation}\label{eqn-bl1}
\int_{|x| \leq r_0}  (\log \bar u)^+(x,t)\, dx  \leq C(r_0)\qquad \mbox{for all} \,\,\, t\leq t_0.
\end{equation}
\end{lem}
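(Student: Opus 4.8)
The plan is to obtain \eqref{eqn-bl1} as a mean-value–type estimate for the
subsolution property of $\log \bar u$. First I would record the pointwise
differential inequality satisfied by $\log \bar u$. From \eqref{eqn-baru} and
$\bar u > 0$ we have $\partial_t \log \bar u = \bar u^{-1}\Delta \log \bar u$,
so, using the a priori bounds of Section~\ref{sec-apriori} transferred to the
planar gauge, $\Delta \log \bar u = \bar u\, (\log \bar u)_t$. Since the metric
has $R>0$ and hence $\bar u_t \le 0$ (equivalently $v_t\ge0$), the right-hand
side has a definite sign away from the region where $\bar u$ is small, and in
any case it is controlled: $\Delta(\log \bar u) = \bar u\,\partial_t\log\bar u
= R\,\bar u - (\text{lower order})$ can be written, via \eqref{eqn-p1} in the
pressure variable, so that $\Delta \log \bar u \ge -C$ on $\{|x|\le 2r_0\}$ for
a constant depending only on $r_0$ and the uniform curvature bound
\eqref{eqn-bR}. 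Thus $w := -\log \bar u$ satisfies $\Delta w \le C$ on
$\{|x| \le 2r_0\}$, i.e. $w$ is a (uniformly) subsolution of the Poisson
equation there.

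Next I would apply a local mean-value inequality for such subsolutions to bound
$\int_{|x|\le r_0} w^- = \int_{|x|\le r_0}(\log \bar u)^+$ in terms of one
interior value of $w$ together with the $C$ above. Concretely, write $w = h + p$
on $B_{2r_0}$, where $p$ solves $\Delta p = \Delta w$ with zero boundary data
(so $\|p\|_{L^\infty(B_{2r_0})}\le C(r_0)$ by the elementary bound
$|p|\le C\,r_0^2\,\|\Delta w\|_{L^\infty}$), and $h$ is harmonic with $\Delta h
\le 0$ moved into $p$; since $h = w - p$ and $w$ is a subsolution, $h$ is
subharmonic, so $h$ satisfies the sub-mean-value property on balls centered at
the origin: $h(0) \ge \dashint_{B_{r_0}} h$. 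This gives
$\dashint_{B_{r_0}}(-\log\bar u) = \dashint_{B_{r_0}} w \le h(0) + \|p\|_\infty
\le w(0) + 2C(r_0) = -\log \bar u(0,t) + 2C(r_0)$. Now $-\log\bar u(0,t)$ is
bounded above uniformly in $t$ because $\bar u(0,t)$ is bounded below: indeed
$\bar u(0,t)\ge \bar u(0,t_0)\ge \bar u_\infty(0)>0$ since $\bar u_t\le0$, so
$-\log\bar u(0,t) \le -\log \bar u_\infty(0) =: C(\bar u_\infty(0))$. Combining,
$\dashint_{B_{r_0}}(-\log\bar u)\le C$, which controls the average of the
negative part of $\log\bar u$; but we want the positive part.

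To pass from the average of $(-\log\bar u)$ to $\int (\log\bar u)^+$ I would
reverse the roles: the quantity we actually need bounded above is
$\int_{B_{r_0}}(\log\bar u)^+ = \int_{B_{r_0}} w^-$. Since
$\int w^- = \int w^+ - \int w$ and $\int_{B_{r_0}} w$ is bounded below (from the
sub-mean-value inequality above, $\int_{B_{r_0}} w \ge |B_{r_0}|\,(w(0)-2C)
\ge -C(r_0)$), it suffices to bound $\int_{B_{r_0}} w^+ = \int_{B_{r_0}}
(\log\bar u)^-$ from above, and this is immediate from the upper bound
$\bar u \le C(t_0)$ on $\{|x|\le r_0\}$ (equivalently $v\ge c>0$, using
\eqref{eqn-bv} in the rescaled gauge, or directly the lower bound
$m(t_0)\le u$), which forces $\log\bar u \ge -C$ and hence $w^+\le C$. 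Assembling
the two bounds yields $\int_{B_{r_0}}(\log\bar u)^+ = \int_{B_{r_0}} w^-
\le C(r_0)$ with $C(r_0)$ depending only on $r_0$, $\bar u_\infty(0)$ and
$m(t_0)$, as claimed.

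The main obstacle is getting a clean one-sided bound $\Delta \log\bar u \ge -C$
on a fixed ball $\{|x|\le 2r_0\}$ that holds uniformly as $t\to-\infty$, despite
$\bar u$ (i.e. $\bar u_\infty$) possibly degenerating at other points: this is
exactly where the uniform curvature bound \eqref{eqn-bR} and the pressure
estimates of Lemma~\ref{lem-first-sec-der} are essential, because near a
degeneracy $\Delta\log\bar u = R\,\bar u$ must be shown to stay bounded below,
not just the equation rewritten formally. The rest is the standard split of a
subsolution into a bounded Newtonian potential plus a subharmonic function and
the sub-mean-value inequality — routine once the differential inequality is in
hand.
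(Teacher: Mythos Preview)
Your proposal contains a sign confusion that turns a one-line observation into a non-existent obstacle. You aim to establish $\Delta\log\bar u \ge -C$ on $B_{2r_0}$, i.e.\ $R\,\bar u \le C$, and you flag this as ``the main obstacle.'' But at this stage of the argument you do \emph{not} know that $\bar u$ is bounded on $B_{r_0}$: the whole point of the lemma is to prepare for Proposition~\ref{lem-zero}, before which $v_\infty$ may well vanish (equivalently $\bar u_\infty=+\infty$) at points of $B_{r_0}$, and then $R\bar u$ is genuinely unbounded as $t\to-\infty$ (this is exactly what happens at the poles in the King--Rosenau solution). So the inequality you are trying to prove is false in general. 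Fortunately it is also unnecessary.

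The inequality you actually need is the opposite one, and it is immediate: since $R>0$ we have $\Delta\log\bar u = -R\,\bar u \le 0$, so $\log\bar u$ is \emph{superharmonic}. The super-mean-value inequality then gives directly
\[
\int_{B_{r_0}} \log\bar u(x,t)\,dx \;\le\; |B_{r_0}|\,\log\bar u(0,t) \;\le\; |B_{r_0}|\,\log\bar u_\infty(0),
\]
using $\bar u_t\le 0$ so that $\bar u(0,t)\le \bar u_\infty(0)<\infty$ (note: your chain $\bar u(0,t_0)\ge \bar u_\infty(0)$ has the monotonicity backwards). Combined with the lower bound $\bar u(\cdot,t)\ge \bar u(\cdot,t_0)\ge c(m(t_0),r_0)>0$ on $B_{r_0}$, which controls $\int(\log\bar u)^-$, you get $\int(\log\bar u)^+\le C(r_0)$. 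This is precisely the paper's proof. Your third paragraph in fact invokes the correct inequality $\int w \ge |B_{r_0}|\,w(0)$ (for $w=-\log\bar u$ subharmonic), but you justify it by pointing to your second paragraph, which derived the \emph{reverse} inequality $\dashint w \le w(0)+2C$ from the unavailable bound $\Delta w\le C$; and the final step there needs $w(0)$ bounded \emph{below}, i.e.\ $\bar u(0,t)$ bounded \emph{above} by $\bar u_\infty(0)$, not the lower bound you cite. Once the signs are straightened out, no Newtonian-potential splitting is required: superharmonicity of $\log\bar u$ does all the work.
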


\begin{proof} 
Set 
$${\bar  U}(r,t) =  \int_0^{2\pi}\log {\bar  u}(r,\theta,t)\, d\theta.$$
Since 
$\Delta  \log {\bar  u}  =  -R\, {\bar u} \le 0$, 
integrating  the inequality $\Delta  \log {\bar  u} \leq 0$ in $\theta$  yields to the inequality
$$r^{-1} \big ( r \, \bar U_r)_r \leq 0.$$
Integration in $r$ (using that $\bar u(\cdot, t)  $ is smooth at the origin) shows  that  $U_r \leq 0$,
implying  the bound
$$\bar U(r,t) \leq  \log\bar u(0,t) \leq \log \bar u_\infty(0) < \infty.
$$
Hence, for any $t \leq t_0$, we have 
$$
\int_{|x| \leq r_0}  \log \bar u(x,t)\, dx  \leq C\, \log \bar u_\infty(0)\, r_0^2 \leq C_1(r_0)
$$
for a uniform in $t$ constant  $C_1(r_0)$. In addition, the inequality $u_t \leq 0$ implies the bound
$$\inf_{ x  \in B_{r_0}(0) }   \bar u(x,t)  \geq \inf_{ x  \in B_{r_0}(0) }  \bar u(x,t_0) \geq c (m(t_0),r_0) >0$$
which gives  
$$\int_{|x| \leq r_0} ( \log \bar u )^{-} (x,t) \, dx  \leq   C_2(r_0).
$$
Combining the previous  two integral bounds yields   \eqref{eqn-bl1}. 
\end{proof}

The following $L^\infty$ bound   is inspired by the beautiful paper of Brezis and Merle \cite{BM}. 
It will play a crucial role in the proof of  Proposition \ref{lem-zero}. 

\begin{lem}\label{lem-bm}  Let $\delta >0$ be a given small number.  If for some $t \leq t_0$, $\rho <1$  and $x_0 \in R^2$, with $|x_0| \leq r_0$,  we have 
\begin{equation}\label{eqn-ib1}
\int_{B_\rho(x_0)} R \, \bar u (x,t) \, dx \leq 4\pi -2 \delta, 
\end{equation}
then
\begin{equation}\label{eqn-gb1}
\sup_{B_{\rho/4} (x_0)} u (\cdot,t) \leq C(r_0, \rho, \delta)
\end{equation}
for a constant $C(r_0, \rho, \delta)$ which depends on $r_0$, $\rho$, $\delta$, $\bar u_\infty(0)$ and $m(t_0)$
but is independent of  time $t$.  

\end{lem}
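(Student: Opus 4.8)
The plan is to adapt the Brezis–Merle argument to the parabolic setting. The key observation is that, writing $w := \log \bar u(\cdot,t)$ for fixed $t$, equation \eqref{eqn-baru} together with $\bar u_t \le 0$ gives
$$\Delta w = -R\,\bar u + \text{(something} \le 0) \le 0,$$
more precisely $\Delta w = \bar u_t/\bar u - R\bar u \le -R\bar u$; but what we actually use is the representation of $w$ via its Riesz potential against the nonnegative density $R\bar u$ plus a harmonic correction controlled by Lemma~\ref{lem-l1b}. So the first step is to fix $t \le t_0$, work on the ball $B_\rho(x_0)$, and split $w = w_1 + w_2$, where $w_1$ is the Newtonian potential of $-\Delta w = R\bar u + (\text{error})$ restricted to $B_\rho(x_0)$, i.e.
$$w_1(x) = \frac{1}{2\pi}\int_{B_\rho(x_0)} \log\frac{1}{|x-y|}\,\bigl(R\bar u\bigr)(y,t)\,dy,$$
and $w_2 = w - w_1$ is harmonic in $B_\rho(x_0)$.

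For $w_1$, the heart of the matter is the Brezis–Merle inequality: if $\|R\bar u(\cdot,t)\|_{L^1(B_\rho(x_0))} = \alpha < 4\pi$, then $e^{(4\pi-\eps)|w_1|/\alpha} \in L^1(B_\rho(x_0))$ with a bound depending only on $\rho$ and $\eps$; hypothesis \eqref{eqn-ib1} with $\alpha \le 4\pi - 2\delta$ gives exactly the room needed to conclude $e^{w_1} \in L^p(B_\rho(x_0))$ for some $p>1$ depending on $\delta$, with a uniform bound. (One must be slightly careful: the density driving $w$ is $R\bar u - \bar u_t/\bar u$; but $-\bar u_t/\bar u \ge 0$ only makes $-\Delta w$ larger, so one should instead argue that $R\bar u \ge -\Delta w$ is false in general — rather, use $\bar v_t = \bar v \Delta \bar v - |\nabla \bar v|^2$ is not directly needed; the clean route is $\Delta w = -R\bar u + \bar u_t/\bar u$ and note $\bar u_t/\bar u \le 0$, so $-\Delta w = R\bar u - \bar u_t/\bar u \ge R\bar u \ge 0$, and the total mass $\int_{B_\rho}(-\Delta w)$ could exceed $4\pi-2\delta$. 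To fix this, split off only the part coming from $R\bar u$ and absorb $-\bar u_t/\bar u$, which is nonnegative, into $w_1$ as well; but then its mass is not controlled by \eqref{eqn-ib1}. The resolution: observe $\int_{B_\rho}(-\bar u_t/\bar u)\,dx \le \int_{B_\rho} R\bar u\,dx$ is also not obvious. I will instead decompose using the subsolution structure — since $-\bar u_t/\bar u \ge 0$ we have $\Delta w \le -R\bar u$, so $w$ is bounded above by the solution $\tilde w$ of $\Delta \tilde w = -R\bar u$ in $B_\rho$ with $\tilde w = w$ on $\partial B_\rho$, and then run Brezis–Merle on $\tilde w$, whose driving mass is exactly $\int_{B_\rho} R\bar u \le 4\pi - 2\delta$.)

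For $w_2$ (harmonic in $B_\rho(x_0)$, or rather the boundary data term $\tilde w - w_1$), the mean value property plus the $L^1$ bound on $w^+$ from Lemma~\ref{lem-l1b} — transported to the slightly larger ball — gives a uniform upper bound on $\sup_{B_{\rho/2}(x_0)} w_2$; here one also uses the lower bound $\bar u \ge c(m(t_0),r_0) > 0$ so that $w^- = (\log \bar u)^-$ is uniformly bounded, making $\|w\|_{L^1}$, not just $\|w^+\|_{L^1}$, controlled. Combining, $e^w = \bar u \in L^p(B_{\rho/2}(x_0))$ uniformly, and then $\Delta w = \bar u_t/\bar u - R\bar u$ has $\Delta\log\bar u \ge -R\bar u \ge -C \bar u$ with $\bar u \in L^p$, so $\log\bar u$ is a subsolution with $L^p$ right-hand side; a final local Moser iteration / mean-value estimate on $B_{\rho/4}(x_0)$ upgrades the $L^p$ bound to the pointwise bound $\sup_{B_{\rho/4}(x_0)}\bar u \le C(r_0,\rho,\delta)$, and since $u = \bar u \cdot (\text{smooth conformal factor bounded on } \{|x|\le r_0\})$, the same bound holds for $u$, as claimed.

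The main obstacle I expect is the bookkeeping around which nonnegative density actually drives the equation: the extra term $\bar u_t/\bar u$ has the "wrong" sign for a direct potential representation but the "right" sign ($\le 0$) for comparison, so the cleanest path is to replace $\log\bar u$ by its harmonic-majorant-type comparison function $\tilde w$ with Poisson data on $\partial B_\rho(x_0)$ and Newtonian potential of $R\bar u$ in the interior, apply Brezis–Merle to $\tilde w \ge \log\bar u$, and only at the very end return to the true equation to run the Moser step. A secondary technical point is ensuring all constants are traced to depend only on $r_0,\rho,\delta$ together with $\bar u_\infty(0)$ and $m(t_0)$ (entering solely through Lemma~\ref{lem-l1b} and the lower bound on $\bar u$), and in particular not on $t$; this is automatic provided every estimate used is either Lemma~\ref{lem-l1b}, the curvature bound \eqref{eqn-bR}, or a purely local elliptic inequality.
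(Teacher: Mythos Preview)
Your overall strategy---split $\log\bar u$ into a Newtonian potential piece plus a harmonic piece, apply Brezis--Merle to the first, control the second via the mean value inequality and Lemma~\ref{lem-l1b}, then bootstrap---is exactly the paper's approach. But you have manufactured a difficulty that is not there, because your formula for $\Delta w$ is wrong.

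Recall that for a conformal metric $g=\bar u\,g_e$ on the plane, the scalar curvature is by definition
\[
R=-\frac{1}{\bar u}\,\Delta\log\bar u,
\]
so with $w=\log\bar u(\cdot,t)$ one has the \emph{exact} elliptic identity
\[
-\Delta w = R\,\bar u = R\,e^{w},
\]
valid at each fixed time $t$. There is no extra term $\bar u_t/\bar u$; the Ricci flow equation $\bar u_t=\Delta\log\bar u$ simply says $\bar u_t=-R\bar u$, which is the same relation read differently. Once you see this, the entire parenthetical discussion about absorbing $-\bar u_t/\bar u$, the comparison function $\tilde w$, and the ``subsolution structure'' evaporates: you solve $-\Delta w_1=R\,e^w$ on $B_\rho(x_0)$ with $w_1=0$ on $\partial B_\rho(x_0)$, apply Theorem~\ref{thm-bm} directly using hypothesis~\eqref{eqn-ib1}, set $w_2=w-w_1$ harmonic, and proceed exactly as you outline.

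As a side remark, even under your (incorrect) hypothesis $\Delta w\le -R\bar u$, your comparison goes the wrong way: if $\Delta\tilde w=-R\bar u$ and $\tilde w=w$ on $\partial B_\rho$, then $w-\tilde w$ is superharmonic with zero boundary data, hence $w\ge\tilde w$, not $w\le\tilde w$; this would give no upper bound on $\bar u$. Fortunately the issue is moot once the equation is written correctly.
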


The proof of the  bound \eqref{eqn-gb1} will
use the ideas  of Brezis and Merle \cite{BM}, 
including the following result which we state for the reader's convenience. 

\begin{thm}[Brezis-Merle]
\label{thm-bm}
Assume that  $\Omega \subset  \mathrm{R}^2$ is a bounded domain and let $w$ be a 
solution of 
\begin{equation}\label{eqn-ell}
\begin{cases}
-\Delta w = f(x) \qquad &\mbox{in} \,\,\, \Omega, \cr
\,\,\,\, \quad u = 0 \qquad &\mbox{on} \,\,\, \partial\Omega,
\end{cases}
\end{equation}
with $f\in L^1(\Omega)$. Then,  for every $\delta \in (0,4\pi)$ we have
$$\int_{\Omega}e^{\frac{(4\pi-\delta)|w(x)|}{\|f\|_{L^1(\Omega)}}} \, dx
\le \frac{4\pi^2}{\delta}(\diam\Omega)^2.$$
\end{thm}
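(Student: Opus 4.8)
The plan is to reproduce the classical argument of Brezis and Merle, whose three ingredients are the Green's representation of $w$, the pointwise bound of the Green's function by the whole–plane fundamental solution, and Jensen's inequality combined with Fubini's theorem.

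First I would normalize. The case $f\equiv 0$ is trivial (then $w\equiv 0$ and the assertion reduces to the isodiametric inequality), so assume $\|f\|_{L^1(\Omega)}>0$; replacing $w$ by $w/\|f\|_{L^1(\Omega)}$ and $f$ by $f/\|f\|_{L^1(\Omega)}$ changes neither the equation nor the exponent $(4\pi-\delta)|w|/\|f\|_{L^1(\Omega)}$, so it suffices to treat the case $\|f\|_{L^1(\Omega)}=1$ and prove
\[
\int_\Omega e^{(4\pi-\delta)|w(x)|}\,dx \;\le\; \frac{4\pi^2}{\delta}\,D^2, \qquad D:=\diam\Omega.
\]
Writing $w(x)=\int_\Omega G(x,y)f(y)\,dy$ with $G\ge 0$ the Dirichlet Green's function of $-\Delta$ on $\Omega$, and using the bound $G(x,y)\le \frac{1}{2\pi}\log\frac{D}{|x-y|}$ (the function $y\mapsto \frac{1}{2\pi}\log\frac{D}{|x-y|}-G(x,y)$ is harmonic in $\Omega$ and equals $\frac{1}{2\pi}\log\frac{D}{|x-y|}\ge 0$ on $\partial\Omega$, since $|x-y|\le D$ there), we obtain, with $d\mu:=|f|\,dx$ a probability measure on $\Omega$,
\[
|w(x)| \;\le\; \frac{1}{2\pi}\int_\Omega \log\frac{D}{|x-y|}\,d\mu(y).
\]

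Next I set $\alpha:=\frac{4\pi-\delta}{2\pi}=2-\frac{\delta}{2\pi}$, which satisfies $\alpha<2$. Then
\[
(4\pi-\delta)\,|w(x)| \;\le\; \alpha\int_\Omega \log\frac{D}{|x-y|}\,d\mu(y) \;=\; \int_\Omega \log\Big(\frac{D}{|x-y|}\Big)^{\!\alpha}d\mu(y),
\]
and applying Jensen's inequality to the convex function $\exp$ and the probability measure $\mu$ yields the pointwise estimate
\[
e^{(4\pi-\delta)|w(x)|} \;\le\; \int_\Omega \Big(\frac{D}{|x-y|}\Big)^{\!\alpha}d\mu(y).
\]
Integrating in $x$ over $\Omega$, interchanging the order of integration by Fubini, using that $\Omega\subset \overline{B_D(y)}$ for every $y\in\Omega$, and evaluating the radial integral $\int_{B_D(y)}|x-y|^{-\alpha}\,dx=\frac{2\pi}{2-\alpha}D^{2-\alpha}$ (finite exactly because $\alpha<2$), we get
\[
\int_\Omega e^{(4\pi-\delta)|w(x)|}\,dx \;\le\; \int_\Omega D^\alpha\Big(\int_\Omega \frac{dx}{|x-y|^\alpha}\Big)d\mu(y) \;\le\; \frac{2\pi D^2}{2-\alpha} \;=\; \frac{4\pi^2}{\delta}\,D^2,
\]
since $2-\alpha=\delta/2\pi$ and $\mu(\Omega)=1$. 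Undoing the normalization gives the statement.

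The argument is entirely classical, and the only delicate point is the passage, for merely $f\in L^1(\Omega)$, from the distributional equation $-\Delta w=f$ to the Green's representation and the pointwise potential bound for $|w|$; this is the content of the Stampacchia duality theory for $L^1$ data. Alternatively — and this is perhaps the cleanest route — one proves the inequality first for $f\in C^\infty_c(\Omega)$, where the representation formula is elementary, and then passes to general $f\in L^1(\Omega)$ by approximation together with Fatou's lemma, which is legitimate precisely because the right-hand side $\frac{4\pi^2}{\delta}(\diam\Omega)^2$ does not depend on $f$. Once the potential bound on $|w|$ is in hand, the remaining chain (Jensen's inequality, then Fubini, then the explicit radial integral with the bookkeeping of constants) is mechanical.
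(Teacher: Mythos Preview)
Your proof is correct and is precisely the classical argument of Brezis and Merle. The paper itself does not prove this theorem: it is quoted from \cite{BM} and stated only for the reader's convenience before being applied in the proof of Lemma~\ref{lem-bm}, so there is no ``paper's own proof'' to compare against. Your write-up reproduces the original Brezis--Merle proof faithfully (Green's representation, the bound $G(x,y)\le \frac{1}{2\pi}\log\frac{D}{|x-y|}$ via the maximum principle, Jensen with the probability measure $|f|\,dx/\|f\|_{L^1}$, Fubini, and the explicit evaluation of the radial integral), and your remark about handling general $f\in L^1$ by approximation and Fatou is exactly how the original paper proceeds.
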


\begin{proof}[Proof of Lemma \ref{lem-bm}]
Fix $t \leq t_0$  so that \eqref{eqn-ib1}
holds, according to the statement of the lemma. Throughout the proof of the lemma we will denote by  $C(\rho,\delta)$ various constants which depend on 
 $\rho$ and $\delta$ but are independent of time $t$.  

Set   $w :=\log \bar u (\cdot, t) $ and observe that $w$ solves the elliptic equation 
$$-\Delta w = R \, e^w\qquad \mbox{in} \,\,\, B_\rho(x_0)$$
with $R$ denoting the scalar curvature.
Let $w_1$
solve problem \eqref{eqn-ell} in $\Omega:=B_\rho(x_0)$ with $f := R\, e^w$. Since
$$\| f \|_{L^1(\Omega)} \leq 4\pi - 2\delta$$
by our assumption \eqref{eqn-ib1}, Theorem \ref{thm-bm} implies the bound
\begin{equation}\label{eqn-bbm} 
\int_{B_\rho(x_0) } e^{p \, |w_1(x)|} \, dx
\le C(\rho,\delta)
\end{equation}
with
$$p:= \frac {4\pi-\delta}{4\pi - 2\delta} >1.$$
Combining   \eqref{eqn-bbm} and Jensen's inequality gives the estimate 
\begin{equation}
\label{eqn-w1}
\|w_1 \|_{L^1(B_\rho(x_0))} \le C(\rho,\delta).
\end{equation}
The difference  $w_2 := w-w_1$ satisfies  $\Delta w_2 = 0$ on $B_\rho(x_0)$. Hence by the  mean value
inequality 
\begin{equation}
\label{eqn-w2}
\|w_2^+\|_{L^{\infty}(B_{\rho/2} (x_0))} \le C(\rho) \, \| w_2^+ \|_{L^1(B_\rho(x_0))}.
\end{equation}
Since
$w_2^+ \le w^+ + |w_1|$ 
combining \eqref{eqn-w1} and \eqref{eqn-bl1}  yields the bound
$$\| w_2^+ \|_{L^1(B_\rho(x_0))} \le C(r_0,\rho,\delta)$$
if $|x_0| \leq r_0$, with $C(r_0,\rho,\delta)$ also depending on $m(t_0)$ and $\bar u_\infty(0)$,
as in the statement of Lemma \ref{lem-l1b}. 
Express   $R\, e^w = R\, e^{w_2}\, e^{w_1}$ and recall  that $-\Delta w = R \, e^w$
with $R$ is uniformly bounded, so that $R \, e^w \le C \, e^{w_1}$
on $B_{\rho/2}(x_0)$ by \eqref{eqn-w2}.  
We conclude by standard elliptic estimates and  \eqref{eqn-bbm} that
$$
\|w^+\|_{L^{\infty}(B_{\rho/4}(x_0))} \le C\,
\big (  \|w^+\|_{L^1(B_{\rho/2}(x_0))} + \|e^{w_1}\|_{L^p(B_{\rho/2}(x_0))}  \big ) 
\le C(\rho,\delta,\rho_0) 
$$
finishing the proof of the Lemma.
\end{proof}

We will now prove Proposition \ref{lem-zero}.

\begin{proof} We argue by contradiction. Assume that there exist at least three distinct points $p_i$, $i=1,2,3$
such that $\lim_{t \to \infty} v(p_i,t) =0$, $i=1,2,3$, or equivalently  $\lim_{t \to \infty} u(p_i,t) =+\infty$.
By our choice of the north and south poles in our given coordinates 
(as in the beginning of this section) these points belong to
$S^2 \setminus \{ S, N \}$, hence they are mapped to three distinct points $x_i$, $i=1,2,3$ on $R^2$ via
the stereographic  projection that maps $S$ to the origin. We choose  $0 < \rho <1$  so that 
all balls $B_\rho(x_i)$ are disjoint.  

Let  $\delta <1/2$ be a given positive number. Given any sequence $t_k \to \infty$ and any of the three points 
$x_i$, we may choose a subsequence, still denoted by $\{ t_k \}$,  such that 
$$
\int_{B_\rho(x_i)} R \, \bar u (x,t_k) \, dx >  4\pi -  2\delta$$
for the particular point $x_i$. Otherwise $\lim_{t \to \infty} u(x_i,t) < \infty$ by \eqref{eqn-gb1} which would contradict the choice of $x_i$. This readily implies the existence of a sequence $t_k \to -\infty$ for which  
\begin{equation}\label{eqn-g1}
\int_{B_\rho(x_i)} R \, \bar u (x,t_k) \, dx >  4\pi - 2 \delta,   \qquad \forall k
\end{equation}
and for all three points $x_i$, $i=1,2,3$. 

Recall that the balls $B_\rho(x_i)$ are chosen to be disjoint.
It follows that the total curvature for our metric $g(t_k) := \bar u(\cdot, t_k) $ satisfies
$$\int_{\R^2} R\, \bar u \, dx \geq \sum_{i=1}^3 \int_{B_\rho(x_i)} R \, \bar u (x,t_k) \, dx >  12 \pi -  6\delta > 8\pi$$
if $\delta < 1/2$, a contradiction to the total curvature of $g(t_k)$ being equal to $8\pi$. This completes the proof of  the proposition.

\end{proof}

We are now in a position to classify all  backward limits  $u_\infty$.

\begin{proof}[Proof of Theorem \ref{prop-limit}] 
Assume from now on that the backward limit $v_\infty$ is not
identically zero.  We have just shown in Proposition \ref{lem-zero} that 
$v_\infty$ has at most   two zeros.  Choose a conformal change of $S^2$
which brings those two zeros to two antipodal poles on $S^2$ (if there is
only one zero, we bring it to the south pole and we choose 
for a north pole its antipodal point).  Let
$\psi, \theta$ be global coordinates on $S^2$, where $\psi =
\frac{\pi}{2}$ and $\psi = -\frac{\pi}{2}$ correspond to the poles
(denote them by $S$ and $N$). Observe that equation \eqref{eqn-p1} is
strictly parabolic away from the poles, uniformly as $t \to -
\infty$. It follows by standard parabolic PDE arguments,  that the convergence $v(\cdot,t) \to v_\infty$,
as $t \to -\infty$, is in $C^\infty$  on compact subsets of
$S^2\backslash\{S,N\}$. Perform the Mercator's transformation \eqref{eqn-uU}, and denote by 
  ${\hat v}(s,\theta,t) = v(\psi,\theta,t)\,
\cosh^2x$ the pressure in cylindrical coordinates.  We conclude that
$$\lim_{t \to -\infty} {\hat v}(s,\theta,t) = {\hat v_\infty}(s,\theta) 
:= v_\infty(\psi,\theta)\, \cosh^2x > 0$$ 
and the convergence is smooth on compact subsets of $\mathbb{R}\times [0,2\pi]$.
The limit $\hat v_\infty$ satisfies
$${\hat v_\infty}\Delta {\hat v_\infty} - |\nabla {\hat v_\infty}|^2 = 0$$ 
or (since ${\hat v_\infty}(s,\theta) > 0$ on $\mathbb{R}\times [0,2\pi]$) 
equivalently,
\begin{equation}
\label{eq-steady-simple}
\Delta_c \log{\hat v_\infty} = 0
\end{equation}
where $\Delta_c$ denotes  the cylindrical laplacian on $\mathbb{R}^2$.  To
finish the proof of Theorem \ref{prop-limit} we need to classify
the solutions of  equation  (\ref{eq-steady-simple}) that come as  limits of
ancient solutions $\hat v(\cdot,t)$.
\smallskip

To this end, set $w :=\log{\hat v_\infty}$ so that $\Delta_c w=0$, by \eqref{eq-steady-simple}.
We can
view $w$ as a harmonic function on $\mathbb{R}^2$, after extending it in the 
$\theta$ direction so that it remains $2\pi$-periodic. In addition,  since ${\hat v_\infty}(s,\theta) = v_\infty(\psi,\theta)\, \cosh^2 s$ and 
$v_\infty = \lim_{t\to -\infty} v(\cdot,t) \le C$,  it follows that there are uniform constants $C_1, C_2$ such  that
\begin{equation}\label{eqn-claimf}
w (s,\theta) \le C_1 + C_2\, |s|.
\end{equation}
Since $w$ is harmonic on $\R^2$ it follows by the mean value formula (via a
standard argument) that the same  bound \eqref{eqn-claimf} holds for $|w|$. 
It is now a well known fact that the only harmonic functions on $\R^2$ with linear growth at infinity  are the linear functions.
Since our function $w$ is periodic in $\theta$, it follows that 
$w (s,\theta) = a_1  + a_2 \, s$, for some constants $a_1,a_2$, and after exponentiating we obtain 
\begin{equation}\label{eqn-form}
{\hat v_\infty}(s,\theta) = \mu \,  e^{\la s}
\end{equation}
for some constants $\mu  \ge 0$ and $\la \in \R$. 
Since we have assumed that the function  $\hat v_\infty$ is not identically zero, 
we have  $\mu  > 0$.  

\smallskip
To finish our argument we need to show that $\lambda =0$. We recall  the estimate $|\nabla_{S^2}  v| \le C\sqrt{v}$,  shown in Lemma \ref{lem-first-sec-der}, or equivalently, $|v_{\psi}| \le C\sqrt{v}$, which in cylindrical coordinates gives the bound 
$$|\hat{v}_s(s,\theta,t) - 2\hat{v}(s,\theta,t)\tanh{s}| \le C\sqrt{\hat{v}(s,\theta,t)}$$
which holds 
for $t \le t_0 < 0$. Taking $t\to -\infty$ we obtain
$$|(\hat{v}_{\infty})_s(s,\theta) - 2 \hat{v}_{\infty}(s,\theta)\, \tanh s| \le C\sqrt{\hat{v}_{\infty}(s,\theta)}$$
or equivalently,
$$\sqrt{\mu}\, |\lambda - 2\, \tanh s| \le Ce^{-\lambda s/2} \qquad \forall s \in (-\infty,+\infty)$$
which is impossible unless either $\lambda = 2$ or $\lambda = 0$. 

In the case that  $\lambda =2$, then  if $\bar u$ is the conformal
factor of our metric $g$ parametrized by the standard plane
and $\bar v := \bar u^{-1}$ the corresponding pressure function, 
 then 
$\lim_{t \to -\infty} \bar v(\cdot,t) = \mu$, on $\R^2 \setminus \{0\}.$
 Since $\mu >0$, we have 
$$\lim_{t \to -\infty} \bar u(x,t) = \gamma:= \mu^{-1} < \infty \qquad \mbox{on} \,\, \R^2 \setminus \{0 \}
$$
which in particular implies that $\bar u(\cdot,t)$ is bounded from above and below away from zero 
on any compact subset $K \subset \R^2$. Standard parabolic PDE arguments imply that  $\bar u(\cdot,t) \to \gamma $, 
as $t \to -\infty$, 
in $C^\infty$ on compact subsets of $\R^2$.  By Lemma \ref{prop-plane} (which will be proven at the
end of Section \ref{section-sphere}) this is impossible. We conclude that $\lambda=0$.

\smallskip

The above discussion yields that  $\hat v_\infty(s,\theta)=\mu$, with $\mu \geq 0$. Going 
back to the   sphere $S^2$,  via Mercator's transformation, we conclude that
$$v_\infty(\psi,\theta):= \lim_{t \to - \infty} v(\psi,\theta,t) =  \mu \, \cos^2\psi.$$
Moreover, the convergence is  smooth  on
compact subsets of $S^2\backslash\{S,N\}$. This finishes the proof of the theorem.

\end{proof}


\section{The King-Rosenau Solutions}
\label{section-KR}

We have shown in the previous section that there exists a parametrization of our evolving metric $g(t)$ on $S^2$, namely
$g(t)=u(\psi,\theta,t)\, ds_p^2$ for which the backward limit of the pressure function $v:=u^{-1}$  satisfies 
$$v_\infty(\psi,\theta):= \lim_{t \to - \infty} v(\psi,\theta,t) =  \mu\, \cos^2\psi$$
with $\mu  \geq 0$. Assuming,  throughout this section, that $\mu >0$   we will show
 
\begin{thm}
\label{prop-King}
If  the backward limit $v_\infty (\psi,\theta)=  \mu\, \cos^2\psi$, with $\mu >0$, then 
$v$ is one of the King-Rosenau solutions \eqref{eqn-King-Rosenau}.
\end{thm}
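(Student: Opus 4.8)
The strategy is to reduce the problem to an ODE analysis by combining the backward convergence (from Theorem~\ref{prop-limit}) with a symmetry/uniqueness argument. First I would use the fact that $v_\infty(\psi,\theta) = \mu\cos^2\psi = \mu - \mu\sin^2\psi$ is radially symmetric (depends only on $\psi$) and that the King-Rosenau pressure $v_K(\psi,t) = a(t) - b(t)\sin^2\psi$ has exactly the same $\psi$-profile. So the natural ansatz is that $v$ itself is radially symmetric and quadratic in $\sin\psi$. To justify radial symmetry one can invoke the backward uniqueness / rigidity of the Ricci flow: since $v(\cdot,t)\to v_\infty$ as $t\to-\infty$ in $C^{1,\alpha}(S^2)$ and $v_\infty$ is invariant under the $SO(2)$ rotation fixing $S,N$ (and under the reflection $\psi\mapsto-\psi$ followed by the appropriate isometry), the solution $v(\cdot,t)$ must share these symmetries for all $t$ — otherwise applying the isometry produces a different ancient solution with the same backward limit, contradicting backward uniqueness for the (strictly parabolic, away from the poles) equation~\eqref{eqn-p1}. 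I would then record that $v=v(\psi,t)$.

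\textbf{Key steps.} Once radial symmetry is in hand, I would substitute $v(\psi,t) = a(t) - b(t)\sin^2\psi$ into~\eqref{eqn-p1} and show this ansatz is actually \emph{forced}. Writing $x=\sin\psi$, note $\dds v = v_{\psi\psi}-\tan\psi\,v_\psi$ becomes a quadratic differential operator in $x$; plugging $v = a - bx^2$ into $v_t = v\dds v - |\ds v|^2 + 2v^2$ and matching the polynomial coefficients in $x$ (degrees $0,2,4$) yields an overdetermined but consistent system of ODEs for $a(t),b(t)$, which integrates to $a(t)=-\mu\coth(2\mu t)$, $b(t)=-\mu\tanh(2\mu t)$ with the same $\mu>0$ read off from the backward limit (since $a(t)-b(t)\to\mu$ and $b(t)\to 0$ as $t\to-\infty$, matching $v_\infty=\mu\cos^2\psi$). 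The remaining — and most delicate — point is to show that \emph{every} radially symmetric ancient solution with this backward limit is of this polynomial-in-$\sin^2\psi$ form: I would do this by a ``linearization + spectral'' argument, writing $v = v_\infty + w$, deriving the linearized equation for $w$ near $t=-\infty$, expanding $w(\cdot,t)$ in eigenfunctions of the relevant weighted operator on the interval (the Legendre-type operator arising from $\dds$ acting on radial functions), and showing that the decay rate $v_t\to 0$ forces all but the lowest finitely many modes to vanish, so that $v$ stays in the three-dimensional family $\{a - b\sin^2\psi\}$ for all time; then the nonlinear equation confines it there and pins down $a,b$.

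\textbf{Main obstacle.} The hard part is the rigidity/classification step — showing that no ancient solution can ``hide'' higher modes that are exponentially small as $t\to-\infty$. This is exactly the kind of unique-continuation-at-$t=-\infty$ phenomenon that requires careful control: a naive spectral expansion is complicated by the degeneracy of~\eqref{eqn-p1} at the poles (where $v_\infty$ vanishes), so the a priori estimates of Section~\ref{sec-apriori} (in particular $\sqrt v\,\nabla^2 v$, $v\,\nabla^3 v$ bounded, and $|\nabla v|\le C\sqrt v$) must be used to make sense of the weighted function spaces and to justify term-by-term differentiation. I expect the actual paper to circumvent the full spectral machinery by a cleverer device — e.g.\ exhibiting an explicit conserved or monotone quantity (perhaps an expression built from $v$, $v_\psi$, $v_{\psi\psi}$ that the King-Rosenau solutions make vanish identically) and showing it must vanish on any solution with $v_\infty=\mu\cos^2\psi$ — which would be cleaner than the eigenfunction argument I sketched. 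Either way, converting the backward-limit information into a pointwise algebraic identity for all $t$ is the crux; the ODE integration afterward is routine.
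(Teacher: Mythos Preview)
Your plan has a real gap at the very first step. You invoke ``backward uniqueness'' to force radial symmetry: apply a rotation, get another ancient solution with the same limit $v_\infty$, conclude they must coincide. But backward uniqueness at $t=-\infty$ is not a standard tool here --- the equation \eqref{eqn-p1} degenerates precisely at the poles where $v_\infty$ vanishes, and uniqueness of ancient solutions with a prescribed backward limit is essentially the content of the theorem you are trying to prove. So this step is circular. Your fallback spectral argument suffers the same disease: ruling out exponentially small higher modes as $t\to-\infty$ \emph{is} a backward-uniqueness statement, and the degeneracy at the poles makes the weighted spaces and the spectral gap genuinely delicate, as you yourself note.

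The paper takes a route that sidesteps both issues and never establishes radial symmetry as a separate step. You guessed correctly in your last paragraph: there is an explicit monotone quantity. Working in plane coordinates via stereographic projection, the paper sets
\[
Q \;=\; \bar v\,\bigl[(\bar v_{xxx}-3\bar v_{xyy})^2 + (\bar v_{yyy}-3\bar v_{xxy})^2\bigr],
\]
checks that $Q$ is globally well-defined on $S^2$ (it is invariant under the Kelvin-type change of chart between the two hemispheres), and computes its evolution to show $Q_{\max}(t)$ is nonincreasing. The point is that $Q$ vanishes not only on the King--Rosenau solutions but also on the cylinder \emph{and} on the cigar; since the backward limits (cylinder on compact sets of $S^2\setminus\{S,N\}$, rescaled cigars near the poles) all have $Q=0$, one gets $\lim_{t\to-\infty}Q_{\max}=0$, and monotonicity forces $Q\equiv 0$ for all time. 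The resulting third-order identities $\bar v_{xxx}=3\bar v_{xyy}$, $\bar v_{yyy}=3\bar v_{xxy}$ are purely algebraic and force $\bar v$ to be a specific fourth-degree polynomial in $(x,y)$ at each time; plugging that ansatz into the PDE and using the backward limit then pins down the coefficients and yields exactly the King--Rosenau form. Radial symmetry falls out at the end, never having been assumed.
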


The case where $\mu=0$ will be treated separately in the last section of the paper. 
Observe, that when $\mu >0$, the metric $g_\infty:= v_\infty^{-1} \, ds_p$ is just the cylindrical metric written on $S^2$. 
By performing  a simple rescaling in $t$ and $v$ we may  assume,   without loss of generality, that $\mu=1$. 

\smallskip

Let $S,N$ denote the north  and south pole of the sphere $S^2$ corresponding to $\psi=-\frac \pi 2$ and $\psi = \frac \pi 2$ respectively. 
Consider the stereographic projections  $\Phi_{S}  : S^2 \setminus \{N\} \to \R^2$ and  $\Phi_{N}  : S^2 \setminus \{S\} \to \R^2$
such that $\Phi_S(S)=0$ and $\Phi_N(N) =0$  and set 
$$\bar v_S(x,y,t)  = v (\Phi^{-1}_S (x,y),t)\qquad \mbox{and} \qquad \bar v_N(\zeta, \xi,t) =  v ( \Phi^{-1}_N (\zeta,\xi),t)$$ 
where $v(\psi,\theta,t)$ denotes  the  pressure function on $S^2$. 
Then, if 
\begin{equation}
\label{eqn-var}
\zeta = \frac x{x^2+y^2} \quad \mbox{and} \quad \xi = \frac y{x^2+y^2}
\end{equation}
we have
\begin{equation}\label{eqn-NS}
\bar v_S(x,y,t) = (x^2+ y^2)^2 \, \bar v_N(\zeta,\xi,t).
\end{equation}

Observe that, after stereographic projection, the pressure function in  the King-Rosenau solutions takes  the form
$$\bar v_S(x,y,t) = b(t) + c(t) (x^2+y^2) + b(t) (x^2+y^2)^2$$
and similarly 
$$\bar v_N(\zeta,\xi,t) = b(t) + c(t) (\zeta^2+\xi^2) + b(t) (\zeta^2+\xi^2)^2$$
where $\lim_{t\to -\infty} c(t)= \mu$ and $\lim_{t \to -\infty} b(t) =0$.  

We consider the quantities 
$$Q_S(x,y,t) := \bar v_S \, \big [ \big ( (\bar v_S)_{xxx}  -  3  (\bar v_S)_{xyy} \big )^2  +  \big ( (\bar v_S)_{yyy}  -  3 (\bar v_S)_{xxy} \big )^2 \big ]$$
and, similarly,
$$Q_N(\zeta,\xi,t) :=  \bar v_S \, \big [ \big ( (\bar v_N)_{\zeta\zeta\zeta}  -   3 (\bar v_N)_{\zeta\xi\xi} \big )^2  +  \big ( (\bar v_N)_{\xi\xi\xi}  -    3
(\bar v_N)_{\zeta\zeta\xi} \big )^2 \big ].$$
Both $Q_N$ and $Q_S$ are identically equal to zero on the King-Rosenau solutions. 
A direct calculation (where we make  use of     \eqref{eqn-var} and \eqref{eqn-NS}) shows that 
\begin{equation}\label{eqn-QNS}
Q_S(x,y,t) = Q_N(\zeta,\xi,t).
\end{equation}
Hence, the quantity
$$Q(\psi,\theta,t) := \begin{cases} Q_S(\Phi_S(\psi,\theta),t)\qquad &(\psi,\theta) \in  S^2 \setminus \{N\}\\
Q_N(\Phi_N(\psi,\theta),t)\qquad &(\psi,\theta) \in  S^2 \setminus \{S\}
\end{cases}
$$
is a well defined and smooth function on $S^2 \times (-\infty,0)$. 

\smallskip

{\em Sketch of proof:} We will show next that $Q(\cdot,t)  \equiv 0$,  for all $t <0$,   by showing that its maximum
is decreasing in time and is equal to zero at $t=-\infty$. Following this we will prove that 
any solution of equation \eqref{eqn-p1} which satisfies $Q(\cdot,t)  \equiv 0$ must be one of  the King-Rosenau solutions yielding  the statement of  Proposition \ref{prop-King}. 

\smallskip

Let $$Q_{\max} (t):= \max_{(\psi,\theta) \in S^2} Q(\psi,\theta,t),\qquad t \in (-\infty,0).$$

\begin{lem}\label{lem-mpQ} The function $Q_{\max} (t)$ is decreasing in $t$.  

\end{lem}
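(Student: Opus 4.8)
The plan is to show that $Q_{\max}(t)$ is decreasing by deriving an evolution inequality for $Q$ of the form $Q_t \le v\,\Delta_{S^2} Q + (\text{lower order})$, where the lower-order terms vanish when $Q=0$, and then applying the maximum principle. The quantity $Q$ is built from the ``conformal third derivative'' combination $W := (\bar v_S)_{xxx}-3(\bar v_S)_{xyy} + i\big((\bar v_S)_{yyy}-3(\bar v_S)_{xxy}\big)$, which is (up to a conformal weight) the real/imaginary decomposition of $\partial_{\bar z}^{\,3}$-type operator; the point of the identity \eqref{eqn-QNS} is precisely that $Q=\bar v\,|W|^2$ transforms as a genuine tensor/density on $S^2$, so it suffices to compute on either stereographic chart. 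First I would work in the planar chart, where $\bar v = \bar v_S$ satisfies $\bar v_t = \bar v\,\Delta\bar v - |\nabla\bar v|^2$ (equivalently $\bar u_t=\Delta\log\bar u$). Writing $P := \bar v_{xxx}-3\bar v_{xyy}$ and $\tilde P := \bar v_{yyy}-3\bar v_{xxy}$, one differentiates the equation for $\bar v$ three times in the appropriate combination; because $P,\tilde P$ are components of a holomorphic-type derivative of $\bar v$, the resulting system should take the form
\begin{equation*}
P_t = \bar v\,\Delta P + (\text{first-order terms in }P,\tilde P) + (\text{terms}),
\end{equation*}
and similarly for $\tilde P$, where crucially the zeroth-order ``source'' terms are quadratic in $(P,\tilde P)$. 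Then $|W|^2 = P^2+\tilde P^2$ satisfies a reaction-diffusion inequality, and multiplying by the weight $\bar v$ and using the equation for $\bar v$ once more produces the evolution inequality for $Q$.

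The key steps, in order, would be: (i) record the evolution equation for $\bar v$ in planar coordinates and fix notation for the conformal derivatives $P,\tilde P$; (ii) compute $\partial_t P$ and $\partial_t \tilde P$ by commuting $\partial_t$ past the relevant third-order operators, carefully tracking which terms are ``good'' (diffusive or proportional to $P,\tilde P$) and which are potentially dangerous; (iii) assemble the equation for $|W|^2=P^2+\tilde P^2$, observing that cross terms of the form $P\,\partial_x|\nabla\bar v|^2$ etc. can be absorbed using Cauchy--Schwarz against the gradient term coming from $\bar v\,\Delta(P^2+\tilde P^2) \ge \bar v\,\Delta(P^2+\tilde P^2) - 2\bar v(|\nabla P|^2+|\nabla\tilde P|^2)$; (iv) multiply by $\bar v$, use $\bar v_t = \bar v\,\Delta\bar v-|\nabla\bar v|^2$, and conclude
\begin{equation*}
Q_t \le \bar v\,\Delta Q + b\cdot\nabla Q + c\,Q
\end{equation*}
for locally bounded coefficients $b,c$ (on $S^2\times(-\infty,t_0]$ these are controlled by the a priori estimates of Section \ref{sec-apriori}); (v) invoke \eqref{eqn-QNS} to transfer this to a global inequality for $Q$ on $S^2$, and apply the maximum principle on $S^2$ (which is compact) to conclude $\frac{d}{dt}Q_{\max}(t)\le c(t)\,Q_{\max}(t)$, and in particular that $Q_{\max}$ cannot increase through a zero; combined with the fact established later that $Q_{\max}\to 0$ as $t\to-\infty$, this gives monotonicity. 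For the lemma as stated we only need step (v): a differential inequality $\frac{d}{dt}Q_{\max}\le 0$, which will follow once the source term $c\,Q$ is shown to have the right sign or is handled by noting that at a spatial maximum of $Q$ one has $\nabla Q=0$ and $\Delta Q\le 0$, so $Q_t\le c\,Q$, and then a Gronwall/barrier argument using $Q_{\max}(-\infty)=0$ forces $Q_{\max}$ nonincreasing.

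The main obstacle I anticipate is step (ii)--(iii): the bookkeeping in differentiating $\bar v_t=\bar v\Delta\bar v-|\nabla\bar v|^2$ three times is substantial, and one must verify that the genuinely problematic terms — those which are neither second-order (diffusive) nor manifestly proportional to $P$ or $\tilde P$ — actually cancel. This cancellation is not accidental: it reflects the fact that $P+i\tilde P$ is essentially $\partial_z^{\,3}$ applied to a solution of a conformally natural equation, so the commutator of $\partial_t$ with this operator is itself ``conformal'' and hence expressible back in terms of $P,\tilde P$. The clean way to organize the computation is probably to work with the complex operator $\partial_z^{\,3}$ directly rather than with real third derivatives, which should make the structure transparent and confine the brute-force algebra to a minimum; then the weighting by $\bar v$ is exactly what is needed so that the identity \eqref{eqn-QNS} lets one patch the two charts. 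A secondary, milder point is ensuring all coefficients in the final inequality are bounded on $S^2\times(-\infty,t_0]$ uniformly in $t$ near the poles $S,N$ where $v_\infty$ may degenerate — but here the third-derivative bound $\|v\,\nabla^3 v\|_{L^\infty}\le C$ from Lemma \ref{lem-reg2}, together with the lower-order bounds, supplies what is needed.
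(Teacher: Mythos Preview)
Your overall strategy matches the paper's: compute the evolution of $Q$ in a stereographic chart and apply the maximum principle. Your suggestion to organize the calculation via $\partial_z^{\,3}\bar v$ is apt, since indeed $A+iB = 8\,\partial_z^{\,3}\bar v$ and $Q = 64\,\bar v\,|\partial_z^{\,3}\bar v|^2$. However, there is a genuine gap in the logic of your step~(v). You propose two ways to close the argument: either show the zeroth-order coefficient $c$ has the right sign, or fall back on Gronwall together with $Q_{\max}(-\infty)=0$. The Gronwall fallback does not work: from $\tfrac{d}{dt}Q_{\max}\le c\,Q_{\max}$ with $c>0$ one gets $Q_{\max}(t)\le Q_{\max}(s)\,e^{c(t-s)}$ for $s<t$, and letting $s\to-\infty$ yields the indeterminate form $0\cdot\infty$; nothing forces $Q_{\max}(s)$ to decay like $e^{cs}$. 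So you must establish the sign, and this is precisely where your plan is too loose. Absorbing cross terms by Cauchy--Schwarz, as you outline in step~(iii), will generically produce a coefficient $c$ of indefinite sign.

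What the paper actually does is sharper: a direct computation gives an \emph{exact} identity of the form
\[
\bar Q_t - \bar v\,\Delta\bar Q \;=\; -\frac{1}{B^2}\Big(\bar Q_x + \tfrac{A D_1}{2}\Big)^2 - \frac{1}{A^2}\Big(\bar Q_y + \tfrac{B D_2}{2}\Big)^2 - \tfrac14(D_1^2+D_2^2) - 4R\,\bar Q,
\]
where $D_1 = 2\bar v A_x + \bar v_x A - 3\bar v_y B$ and $D_2 = 2\bar v B_y + \bar v_y B - 3\bar v_x A$. The right-hand side is a sum of complete squares with negative sign, minus $4R\bar Q$ with $R\ge 0$, hence is nonpositive \emph{pointwise}. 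This immediately gives $\tfrac{d}{dt}Q_{\max}\le 0$ with no appeal to $Q_{\max}(-\infty)=0$. Note in particular that the identity is \emph{quadratic} in $\nabla\bar Q$, not linear; the structure is not of the form $b\cdot\nabla Q + cQ$ you were aiming for, and the precise completion of squares is what makes the sign manifest. If you redo the computation in complex notation you should recover exactly this structure, but you cannot afford to throw anything away via Cauchy--Schwarz along the way.
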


\begin{proof} To show that $Q_{\max} (t)$ is decreasing we will compute the evolution equation of $Q$.
We may assume, without loss of generality,  that $Q_{\max}(t)$ at an instant $t$,   is achieved  on the southern
hemisphere corresponding to $-\pi/2 \leq \psi \leq 0$ so that 
$$Q_{\max}(t) = \sup_{(x,y) \in \R^2} Q_S(x,y,t) = Q_S(x_0,y_0,t)$$
for some point $(x_0,y_0) \in \R^2.$  

To simplify the notation, set $\bar v:= \bar v_S$ and 
$$ A := \bv_{xxx} - 3 \, \bv_{xyy}\qquad B:= \bv_{yyy} - 3\, \bv_{xxy}$$
so that 
$$A_x := \bv_{xxxx} - 3 \, \bv_{xxyy}\qquad  B_y:= \bv_{yyyy} - 3\, \bv_{xxyy}$$
and also set
$$\bar Q(x,y,t):=\frac 12 \, Q_S(x,y,t) = \frac \bv 2\, (A^2 + B^2)$$
and
$$D_1 := 2 \bv \, A_x + \bv_x \, A - 3 \bv_y \, B\qquad D_2 := 2 \bv \, B_y + \bv_y \, B - 3 \bv_x \, A.$$
A direct computation shows that 
$$L\bar Q:= \bar Q_t -  \bar v\, \Delta \bar Q =  - a_1 \, \bar Q_x^2 -   b_1 \, \bar Q_y^2 - a_2 \, \bar Q_x -  b_2 \, \bar Q_y  - C\, \bar Q - 4 R \, \bar Q$$
where $R \geq 0$ denotes the scalar curvature of our metric,  and 
$$a_1 =  \frac 1{B^2}, \quad b_1 =  \frac 1{A^2}, \quad a_2 =  \frac {A}{B^2} \, D_1, \quad  b_2 =  \frac {B}{A^2} \, D_2$$
and
$$C=  \frac 2{\bv} \, \big ( \frac {D_1^2}{4B^2} + \frac {D_2^2}{4A^2} \big ).$$
Observe next that 
$$C \bar Q=  \frac 2{\bv} \, \big ( \frac {D_1^2}{4B^2} + \frac {D_2^2}{4A^2} \big ) \, \bar Q=    \frac {A^2 D_1^2}{4B^2} +  \frac {B^2D_2^2}{4A^2} 
+ \frac 14(D_1^2 + D_2^2)$$
and 
$$a_1 \bar Q_x^2 + a_2 \bar Q_x +  \frac {A^2 D_1^2}{4B^2} = \frac 1{B^2} \, \big (\bar Q_x + \frac{AD_1}{2} \big )^2$$
and similarly
$$b_1 \bar Q_y^2 + b_2 \bar Q_y +  \frac {B^2 D_2^2}{4A^2} = \frac 1{A^2} \, \big (\bar Q_y + \frac{BD_2}{2} \big )^2.$$
Hence,
$$L \bar Q = - \frac 1{B^2} \, \big (\bar Q_x + \frac{AD_1}{2} \big )^2 - \frac 1{A^2} \, \big (\bar Q_y + \frac{BD_2}{2} \big )^2 - \frac 14(D_1^2 + D_2^2) - R\, \bar Q$$
where we recall that $R \geq 0$ everywhere. 
Since $\bar Q$ is smooth (because $\bv$ is) it follows that all quantities on the right hand side of the above equation are bounded 
at any given point  $(x,y,t) \in \R^2 \times (-\infty,0)$ and 
$$LQ:= \bar Q_t -  v\, \Delta \bar Q \leq 0\qquad \mbox{for all} \,\, (x,y,t) \in \R^2 \times (-\infty,0).$$
This readily implies that $Q_{\max}(t)$ is decreasing in $t$, finishing the proof of the lemma.
\end{proof}

We will next show  that the backward limit as $t \to -\infty$ of $Q_{\max} (t)$ is zero.

\begin{lem}\label{lem-limit}  We have $$\lim_{t \to -\infty} Q_{\max} (t) =0.$$
\end{lem}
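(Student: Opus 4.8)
The plan is to show that $Q_{\max}(t) \to 0$ as $t \to -\infty$ by exploiting the established convergence $v(\cdot,t) \to v_\infty$ together with the explicit form $v_\infty(\psi,\theta) = \cos^2\psi$ of the backward limit (recall we have normalized $\mu = 1$). The key point is that $Q$ is a third-order differential quantity in $\bar v$, and we have smooth convergence of $v(\cdot,t)$ to $v_\infty$ on compact subsets of $S^2 \setminus \{S,N\}$ by Theorem \ref{prop-limit}. Since $v_\infty$ corresponds, via stereographic projection, to $\bar v_{S,\infty}(x,y) = 1 + |(x,y)|^2 + \tfrac14 |(x,y)|^4$ (wait---more precisely, $v_\infty$ is the cylindrical metric, so $\bar v_{S,\infty}(x,y) = \tfrac14(1+x^2+y^2)^2$ up to the normalization), which is exactly of the King-Rosenau form $b + c(x^2+y^2) + b(x^2+y^2)^2$, we have $Q_S \equiv 0$ on this limit. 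Hence on any fixed compact subset of $S^2 \setminus \{S,N\}$, smooth convergence gives $Q(\cdot,t) \to 0$ uniformly as $t \to -\infty$.

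The main obstacle is controlling $Q(\psi,\theta,t)$ near the two poles $S$ and $N$, where the convergence $v(\cdot,t) \to v_\infty$ is only $C^{1,\alpha}$ and not smooth, so the naive argument breaks down. The plan to handle this is to use the symmetry identity \eqref{eqn-QNS}, namely $Q_S(x,y,t) = Q_N(\zeta,\xi,t)$ under the inversion \eqref{eqn-var}: a neighborhood of the north pole $N$ in the $\Phi_S$ chart corresponds to a neighborhood of the origin in the $\Phi_N$ chart, and vice versa. Thus it suffices to control $Q$ on, say, the closed southern hemisphere, which in the $\Phi_S$ chart is the bounded set $\{(x,y) : x^2+y^2 \le 1\}$ --- a compact subset of $\R^2$, but crucially one that still contains the origin (the image of $S$). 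So even after using both charts, I must control $Q$ on a fixed compact set that includes one pole.

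To close this gap I would use the a priori estimates from Section \ref{sec-apriori}, which hold uniformly up to $t = -\infty$ and do \emph{not} degenerate at the poles in the relevant sense: in particular Lemma \ref{lem-reg2} gives a uniform-in-time bound $\|v\,\ds^3 v\|_{L^\infty(S^2)} \le C$, and the elliptic estimates there give uniform $C^{3,\alpha}$ control of $v^2$. The quantity $Q = \bar v \cdot [\,(\text{third derivatives of }\bar v)^2\,]$ is a weighted third-order quantity; rewriting it in terms of $v^2$ and its derivatives (so that the degenerate weight $v$ is absorbed), one sees that $Q$ is controlled by $\|v^2\|_{C^{3,\alpha}}$-type norms, which are uniformly bounded. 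More importantly, I would argue by compactness: given any sequence $t_k \to -\infty$, pass to a subsequence so that $v(\cdot,t_k)$, together with its derivatives up to order three of $v^2$, converges in $C^{2,\alpha}_{\mathrm{loc}}$ (or $C^{3,\alpha}$) on all of $S^2$; the limit must be $v_\infty$ by uniqueness of the pointwise limit, and the limit of $Q(\cdot,t_k)$ is the corresponding expression built from $v_\infty$, which is identically zero since $v_\infty$ has the King-Rosenau form in \emph{both} stereographic charts. Since this holds along every subsequence, $Q_{\max}(t_k) \to 0$, and since $t_k \to -\infty$ was arbitrary, $\lim_{t\to-\infty} Q_{\max}(t) = 0$. (Monotonicity of $Q_{\max}$ from Lemma \ref{lem-mpQ} then even upgrades this to $Q_{\max}(t) \ge 0$ for all $t$ with limit $0$, but that is not needed here.)

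The delicate technical point---and the one I expect to require the most care---is justifying the convergence of the third-order quantities up to and including the poles, since the PDE \eqref{eqn-p1} degenerates there as $t \to -\infty$ when $\mu = 0$, but in the present case $\mu = 1 > 0$ so $v_\infty = \cos^2\psi$ is strictly positive away from the poles and vanishes only at $S, N$ to second order. One must check that the estimates of Section \ref{sec-apriori}, which are stated on all of $S^2$ with weights like $\sqrt v$ and $v$ in front of the higher derivatives, are strong enough: indeed $\|\sqrt v\, \ds^2 v\|_{C^{0,\alpha}} \le C$ and $\|v\,\ds^3 v\|_{L^\infty} \le C$ uniformly, which (after expressing $Q$ with its $\bar v$-weight, noting $\bar v \sim v / \cos^2\psi$ stays comparable to a fixed positive function near each pole in the appropriate chart) is exactly what bounds $Q$ uniformly and lets a subsequential smooth limit of the weighted quantities be taken. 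Combining this uniform bound with the $C^{1,\alpha}$ and weighted-$C^{\alpha}$ convergence from Proposition \ref{prop-conv}, an Arzelà--Ascoli / interpolation argument then forces $Q(\cdot,t_k) \to Q_{v_\infty} \equiv 0$ in $C^0(S^2)$, completing the proof.
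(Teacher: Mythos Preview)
Your approach has a genuine gap at the poles. The quantity $Q_S = \bar v\,\big[(\bar v_{xxx}-3\bar v_{xyy})^2+(\bar v_{yyy}-3\bar v_{xxy})^2\big]$ scales like $\bar v\,|D^3\bar v|^2$, so to bound it you need control on $\sqrt{\bar v}\,|D^3\bar v|$. But the estimates of Section~\ref{sec-apriori} only give $\|v\,\ds^3 v\|_{L^\infty}\le C$ (equivalently $\bar v\,|D^3\bar v|\le C$), which is half a power of $\bar v$ short: one has $Q_S=(\bar v\,D^3\bar v)^2/\bar v$, and the numerator being bounded says nothing when the denominator vanishes. Your claim that ``$Q$ is controlled by $\|v^2\|_{C^{3,\alpha}}$-type norms'' is therefore incorrect. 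Even the refined route---writing $\bar v A=\tfrac12\big[(\bar v^2)_{xxx}-3(\bar v^2)_{xyy}\big]-3\bar v_x(\bar v_{xx}-\bar v_{yy})+6\bar v_y\bar v_{xy}$ and using the $C^3$ convergence of $v^2$ together with the $C^\alpha$ convergence of $\sqrt v\,\ds^2 v$---shows only that $\bar v A\to 0$ in $C^0$. To conclude $Q_S=(\bar v A)^2/\bar v\to 0$ at the origin you would need $\bar v A=o(\sqrt{\bar v})$ there, i.e.\ a \emph{rate} of convergence of the third derivatives of $v^2$ relative to the rate at which $\bar v(0,t)\to 0$; nothing in Section~\ref{sec-apriori} or Proposition~\ref{prop-conv} supplies such a rate. (Note also that monotonicity of $Q_{\max}$ from Lemma~\ref{lem-mpQ} does not help: $Q_{\max}(t)$ is increasing as $t\to-\infty$, so it gives no a~priori bound.)

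The paper's proof closes this gap by an entirely different, dynamical mechanism: it argues by contradiction, assumes $Q(P_k,t_k)\ge\varepsilon$ with $P_k\to S$, and performs a blow-up near the pole with scale $\rho_k=\bar u(0,t_k)^{-1/2}\to 0$. Lemma~\ref{lem-cigar} shows the rescaled solutions converge smoothly to a cigar soliton; since $Q$ is scale-invariant and vanishes on the cigar, this handles the case $r_k/\rho_k$ bounded. For the intermediate region $r_k/\rho_k\to\infty$ a separate translation limit in cylindrical coordinates is taken, and a Gauss--Bonnet argument (the total curvature is $8\pi$, with $4\pi$ already concentrated in each cigar cap) forces the limit to be the flat cylinder, on which $Q$ again vanishes. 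Your soft compactness argument cannot substitute for this rescaling analysis.
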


As above, we  set $\bar v (x,y,t) = \bar v_S(x,y,t)$ and  consider the conformal factor $\bar u  =  \bar v^{-1}$.
Our evolving metric $g(t)$ is then given by $g(t) = \bar u (\cdot,t) \, (dx^2 + dy^2)$,  where $dx^2 + dy^2$ denotes  the standard metric on the plane.
Recall that the  function $\bar u$ satisfies the evolution equation \eqref{eqn-baru}. 

To simplify the notation, we will also denote by $\bar u$  the conformal factor 
of our metric over the plane $\R^2$ expressed in polar coordinates. 
Then, 
$$g(t) = \bar u(\cdot, t) \, (dr^2 + r^2 \, d\theta^2)  = \hat u (\cdot,t) \, (ds^2 + d\theta^2)$$
where $\hat u$ is the conformal factor in cylindrical coordinates,  defined in terms of $u$ by \eqref{eqn-uU}. 

\smallskip

In the proof of Lemma \ref{lem-limit} we will use the following estimate.

\begin{lem}\label{lem-bounds} For  any $t_0 <0$, there exists  a uniform in time constant $C$, 
depending only on  $t_0$,  such that 
\begin{equation}
\label{eqn-bounds-theta}
|(\log \bar u)_\theta(\cdot,t) | = |(\log \hat u)_\theta(\cdot,t) | \leq C\qquad \mbox{on} \,\,\, -\infty < t \leq t_0.
\end{equation} 
In addition, 
$\, \hat u (\cdot,t)\leq 1\, $ and $\, r^2 \, \bar u (\cdot,t)\leq 1\, $, for all $-\infty < t \leq t_0< 0$. 
\end{lem}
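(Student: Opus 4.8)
The plan is to prove the three assertions in turn, starting from the simplest. First I would establish $\hat u(\cdot,t)\le 1$. Recall from Theorem \ref{prop-limit} (in the normalization $\mu=1$) that $\hat v_\infty(s,\theta)\equiv 1$, i.e. $\hat u_\infty = \lim_{t\to-\infty}\hat u(\cdot,t) = 1$ on $\R\times[0,2\pi]$. Since $u_t = \Delta_c\log\hat u$ and the scalar curvature $R = \hat u_t/\hat u \ge 0$ on an ancient solution, we have $\hat u_t \ge 0$, so $\hat u(\cdot,t)$ is nondecreasing in $t$; hence $\hat u(\cdot,t)\le \lim_{t\to 0^-}\hat u$. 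That upper bound is not obviously $1$, so instead I would run the monotonicity the other way: $\hat u(\cdot,t)\ge \hat u_\infty = 1$ is the wrong direction too. The correct argument is that $\hat u_t\ge 0$ gives $\hat u(\cdot,t)\le \hat u(\cdot,t_0)$ is false; rather one should use that $\hat u$ is increasing in $t$ so $\hat u(\cdot,t)\le \hat u(\cdot,s)$ for $t\le s$, and let $s\to 0$. To pin the constant to exactly $1$, I would instead argue at the level of the evolving metric: the area of $g(t)$ on $S^2$ equals $4\pi - \text{(total curvature deficit)}$, but total curvature is the topological constant $8\pi$, and $\text{area}(g(t)) = 8\pi|t|$ by the extinction at $T=0$; combined with $\hat u_t\ge 0$ and $\hat u_\infty\equiv 1$, the pointwise monotone limit forces $\hat u(\cdot,t)\le 1$ for all $t<0$ once we observe $\hat u$ is increasing to a limit $\le$ something and decreasing from $t=-\infty$... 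The cleanest route: since $\hat u_t \ge 0$, $\hat u(\cdot,t)$ decreases as $t\to-\infty$ to $\hat u_\infty\equiv 1$, hence $\hat u(\cdot,t)\ge 1$; to get $\le 1$ one uses instead that $v_t\ge 0$ means $\hat v$ decreases backward in time to $\hat v_\infty\equiv 1$, so $\hat v(\cdot,t)\ge 1$ is what monotonicity gives, i.e. $\hat u \le 1$. This last is the right statement and is immediate: $\hat v$ is nonincreasing as $t\to-\infty$... wait, $v_t\ge 0$ means $v$ increases in $t$, so backward in time $v$ decreases to $v_\infty$, giving $v(\cdot,t)\ge v_\infty$; on the cylinder $\hat v(\cdot,t)\ge \hat v_\infty\equiv 1$, i.e. $\hat u(\cdot,t)\le 1$. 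Then $r^2\bar u = \hat u$ by \eqref{eqn-Ubaru}, so $r^2\bar u\le 1$ follows for free.

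For the bound on $(\log\hat u)_\theta$, I would differentiate the equation \eqref{eqn-hatu} in $\theta$. Writing $w := (\log\hat u)_\theta = \hat u_\theta/\hat u$, a computation shows $w$ satisfies a linear parabolic equation of the form $w_t = \hat u\,\Delta_c w + (\text{lower order terms})$, obtained by differentiating $\hat u_t = \Delta_c\log\hat u$ and rearranging; more precisely $\partial_t(\log\hat u)_\theta = \Delta_c\big((\log\hat u)_\theta\big)/\hat u \cdot \hat u$... let me be careful: from $\hat u_t = \Delta_c\log\hat u$ we get $(\hat u_t)_\theta = \Delta_c(\log\hat u)_\theta$, and $(\hat u_t)_\theta = (\hat u\, w)_t\cdot$(not quite). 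Rather, from $\partial_t\log\hat u = \hat u^{-1}\Delta_c\log\hat u = \hat v\,\Delta_c\log\hat u$ and the pressure equation $\hat v_t = \hat v\Delta_c\hat v - |\nabla\hat v|^2$, differentiating in $\theta$ gives a uniformly parabolic (since $\hat v\ge 1$) equation for $\hat v_\theta$ with bounded coefficients. The key point is that at $t=-\infty$, $\hat v_\infty\equiv 1$ so $(\hat v_\infty)_\theta\equiv 0$, and on any compact $\theta$-slice the initial data at $t=t_0$ is bounded; by the maximum principle applied on $\R\times[0,2\pi]\times(-\infty,t_0]$ — using the a priori $C^{1,\alpha}$ and higher estimates of Section \ref{sec-apriori} transferred to the cylinder to control behavior as $s\to\pm\infty$ — one bounds $|\hat v_\theta|$, hence $|(\log\hat u)_\theta| = |\hat v_\theta|/\hat v \le |\hat v_\theta|$. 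The identity $(\log\bar u)_\theta = (\log\hat u)_\theta$ is immediate from \eqref{eqn-Ubaru} since $r^2 = e^{2s}$ is $\theta$-independent.

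The main obstacle I anticipate is the noncompactness of the cylinder: the maximum principle for $w = \hat v_\theta$ requires ruling out loss of the maximum to $s\to\pm\infty$, i.e. to the poles $S,N$ of $S^2$. This is exactly where one must invoke the uniform-in-time a priori estimates from Section \ref{sec-apriori} — in particular $|\nabla_{S^2}v|\le C\sqrt v$ and the bound on $v\,\nabla^2 v$ — which, pushed through the change of variables \eqref{eqn-uU}, \eqref{eqn-cv}, give enough decay/control of $\hat v_\theta$ near the ends to justify a compactly-supported cutoff argument or a direct barrier construction. I would first reduce to proving the $\theta$-derivative bound at $t=t_0$ fixed (where it follows from \eqref{eqn-c1a} and \eqref{eqn-uU}), and then propagate it backward to $t=-\infty$ via the parabolic maximum principle for $\hat v_\theta$, the coefficients being controlled by Lemmas \ref{lem-first-sec-der}–\ref{lem-reg2}.
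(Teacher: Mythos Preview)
For the bound $\hat u\le 1$ (and hence $r^2\bar u\le 1$), after several false starts you land on the correct argument, which is the paper's: $v_t\ge 0$ gives $\hat v(\cdot,t)\ge \hat v_\infty\equiv 1$, so $\hat u=\hat v^{-1}\le 1$. You should simply write that one line.

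For the $\theta$-derivative bound your proposed route is both unnecessary and flawed. Flawed because ``propagate it backward to $t=-\infty$ via the parabolic maximum principle'' is not available: the maximum principle for parabolic equations propagates bounds forward in time, not backward from $t=t_0$. Moreover, the linearized equation for $w=\hat v_\theta$ reads
\[
w_t=\hat v\,\Delta_c w-2\nabla\hat v\cdot\nabla w+(\Delta_c\hat v)\,w,
\]
and the zeroth-order coefficient $\Delta_c\hat v$ has no sign, so even forward propagation from $t=-\infty$ would not give a clean $L^\infty$ bound without further work.

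Unnecessary because you already have the two ingredients for a three-line pointwise proof, which is exactly what the paper does. From Lemma~\ref{lem-first-sec-der}, $|\nabla_{S^2}v|^2\le Cv$; the $\theta$-component gives $\sec^2\psi\,v_\theta^2\le Cv$, i.e.\ $v_\theta^2\le Cv\cos^2\psi$. Now the very monotonicity $v_t\ge 0$ you used above also gives $v(\cdot,t)\ge v_\infty=\cos^2\psi$, so $\cos^2\psi\le v$ and hence $v_\theta^2\le Cv^2$, i.e.\ $|(\log v)_\theta|\le C$. Since $(\log\hat u)_\theta=(\log u)_\theta=-(\log v)_\theta$ (the factor $\cos^2\psi$ relating $u$ and $\hat u$ is $\theta$-independent), the bound follows. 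You mention the estimate $|\nabla_{S^2}v|\le C\sqrt v$ only as a technical tool to control behavior at the cylinder ends; in fact, combined with $v\ge\cos^2\psi$, it \emph{is} the whole argument.
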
 
\begin{proof} We have seen in Lemma  \ref{lem-first-sec-der} that the pressure function $v$ written on $S^2$  satisfies the bound
$$|\ds v|^2 \leq C\, v\qquad \mbox{on}\,\,  -\infty < t \leq t_0$$
for a uniform constant $C$. This readily gives us the bound
$$ \sec^2 \psi \, |v_\theta(\psi,\theta,t)|^2 \leq C\, v(\psi,\theta,t)$$
or, equivalently,
$$|v_\theta(\psi,\theta,t)|^2 \leq C\, v(\psi,\theta,t)\, \cos^2 \psi.$$
However, since $v_t \geq 0$, we have $v(\psi,\theta,t) \geq \lim_{t \to -\infty} v(\psi,\theta,t) = \cos^2 \psi$. It follows that
$$|v_\theta(\psi,\theta,t)|\leq C\, v(\psi,\theta,t)$$
or equivalently, $|(\log v)_\theta(\cdot,t) | \leq C$. Hence, the conformal factor $u:=v^{-1}$ also satisfies
$$|(\log u)_\theta(\cdot,t) | \leq C$$
The bounds \eqref{eqn-bounds-theta} now readily follow from \eqref{eqn-uU} and \eqref{eqn-Ubaru}.

For the  $L^\infty$ bounds on $\hat u$ and $\bar u$, we use that  $\hat u_t \leq 0$, which implies   $\hat u (\cdot,t)  \leq \lim_{t \to -\infty} \hat u(\cdot,t)=1$ giving  the  bound $\hat u(\cdot,t) \leq 1$ and  also yielding   that $r^2 \, \bar u(\cdot,t) \leq 1$.  
\end{proof}

For a given sequence $t_k \to -\infty$ we define the re-scaled  solutions of 
\eqref{eqn-baru} given  by 
\begin{equation}\label{eqn-ukkk}
\bar u_k (x,y,t) := \rho_k^2 \,  \bar u(\rho_k x, \rho_k y,t+t_k) 
\end{equation}
where  $\rho_k^2=(\bar u(0,t_k))^{-1}$ chosen so that 
$$\bar u_k (0,0) = 1.$$ Before we give the proof of Lemma  \ref{lem-limit},
we will show

\begin{lem}\label{lem-cigar} Passing to a subsequence,  $\{ \bar u_k \}$ converges  uniformly on compact subsets of $\R \times (-\infty, \infty)$,
to a cigar  solution $\bar {\bar u}$ given by 
\begin{equation}
\label{eqn-cigar}
 \bar {\bar u} (x,y,t) = \frac {\alpha}{\beta e^{2\lambda t} + (x-x_0)^2+(y-y_0)^2}
 \end{equation}
for some constants $\alpha, \beta >0$ and $\lambda$  and some point $(x_0,y_0) \in \R^2$.  
\end{lem}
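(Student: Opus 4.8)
The plan is to extract a limit from the rescaled family $\bar u_k$ by a standard Ricci-flow-on-surfaces compactness argument, and then to identify this limit with a cigar soliton by combining the structural constraints we have already proved: the $\theta$-independence in the limit, and the classification of complete ancient surfaces of bounded curvature.

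\textbf{Step 1: Local uniform bounds and extraction of a limit.} First I would note that $\bar u_k$ satisfies the same equation \eqref{eqn-baru}, since that equation is invariant under the parabolic scaling \eqref{eqn-ukkk} combined with the time-translation. By Lemma \ref{lem-bounds} we have $r^2\bar u(\cdot,t)\le 1$ for $t\le t_0$, which rescales to $r^2\bar u_k(\cdot,t)\le 1$ for $t+t_k\le t_0$, i.e.\ eventually for all $t$ on any fixed compact time interval once $k$ is large; in particular the $\bar u_k$ are locally uniformly bounded above on $(\R^2\setminus\{0\})\times\R$. The normalization $\bar u_k(0,0)=1$ together with the Harnack inequality $\partial_t(\bar u_k)\le 0$ (equivalently $R\ge0$, which is scaling-invariant) gives a uniform positive lower bound on $\bar u_k$ on any compact set backward in time from $t=0$; and since $u_t\le 0$ the same gives an upper bound forward in time. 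Thus on every compact $K\subset\R^2\times\R$ (including a neighborhood of the origin, using the upper bound $r^2\bar u_k\le1$ only away from $0$ but the lower bound everywhere) the curvatures $R$ are uniformly bounded, and Shi-type / Hamilton's estimates (as invoked already in Section \ref{sec-apriori}) give uniform bounds on all derivatives. Passing to a subsequence, $\bar u_k\to\bar{\bar u}$ in $C^\infty_{\mathrm{loc}}$ on $\R^2\times\R$ (the apparent puncture at $0$ is removable because of the two-sided pointwise control there), and $\bar{\bar u}>0$ solves \eqref{eqn-baru} on all of $\R^2\times(-\infty,\infty)$, hence is a complete (by the $r^2\bar u\le1$ bound, the metric has at most quadratic-decay volume, so it is complete) ancient solution of the Ricci flow on the plane with bounded curvature on compact time intervals.

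\textbf{Step 2: The limit is rotationally symmetric.} The key extra input is Lemma \ref{lem-bounds}, which says $|(\log\bar u)_\theta|\le C$ uniformly, hence $|(\log\bar u_k)_\theta|\le C$ as well (this quantity is invariant under the rescaling \eqref{eqn-ukkk}). Actually I want more: I would argue that after rescaling the $\theta$-oscillation of $\log\bar u_k$ on the relevant scale tends to $0$. Indeed, on $S^2$ we showed $|v_\theta|\le C v$ with $v\ge\cos^2\psi=v_\infty$, and since $v(\psi,\theta,t)\to\cos^2\psi$ which is $\theta$-independent, for $\psi$ in any fixed compact subset of $(-\pi/2,\pi/2)$ the relative oscillation $\sup_\theta v/\inf_\theta v\to1$ as $t\to-\infty$. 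Translating to the plane via stereographic projection and then rescaling by $\rho_k\to0$: the region $|(x,y)|\sim\rho_k^{-1}\to\infty$ in the original plane coordinates corresponds to $\psi$ near $-\pi/2$, which is the bad pole; but the region $|(x,y)|\sim\rho_k$, i.e.\ $|(x,y)_{\mathrm{rescaled}}|\sim1$, corresponds to $|(x,y)|_{\mathrm{original}}\sim\rho_k\to 0$, i.e.\ $\psi$ near $-\pi/2$ as well — so I need to be careful about which pole $\rho_k$ is concentrating near. Because $\rho_k^2=(\bar u(0,t_k))^{-1}=\bar v(0,t_k)$ and $\bar v(0,t)\to0$ as $t\to-\infty$ (the origin is a zero of $v_\infty$ if we put a zero at the south pole; if $v_\infty$ has no zero at $S$ then $\rho_k$ stays bounded and the argument degenerates to the $\lambda=2$ case already excluded in Theorem \ref{prop-limit}), the scale $\rho_k\to0$ is blowing up near the zero of $v_\infty$. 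In the rescaled picture the limit $\bar{\bar u}$ therefore inherits, by the oscillation-decay just described, that $\bar{\bar u}_\theta\equiv0$, i.e.\ $\bar{\bar u}$ is rotationally symmetric about the origin.

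\textbf{Step 3: Identification with the cigar.} Now $\bar{\bar u}$ is a complete rotationally symmetric ancient solution of the Ricci flow on $\R^2$ with bounded curvature. By the classification of such solutions (the result of \cite{DS}, quoted in the Remark after Theorem \ref{thm-class-anc}; in the rotationally symmetric case this is classical — Hamilton's cigar is the only complete nonflat rotationally symmetric steady/ancient soliton on the plane with bounded curvature, and King-Rosenau type solutions on $\R^2$ reduce to the cigar in this limit) the limit must be the cigar soliton, which in these coordinates is exactly \eqref{eqn-cigar}: $\bar{\bar u}(x,y,t)=\alpha/(\beta e^{2\lambda t}+(x-x_0)^2+(y-y_0)^2)$ for suitable $\alpha,\beta>0$, $\lambda$, $(x_0,y_0)$. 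Concretely one can see $\bar{\bar u}$ is a soliton because its asymptotics force $\int_{\R^2}R\,\bar{\bar u}\,dx\le 8\pi$ (inherited from the total-curvature bound on $S^2$) while completeness and the ancient property pin down the equality case; alternatively the quantity $Q$ passes to the limit and one checks $Q\equiv0$ on $\bar{\bar u}$ directly once rotational symmetry is known. The main obstacle, I expect, is Step 2: making rigorous that the rescaling $\rho_k\to0$ really concentrates at a zero of $v_\infty$ and that the $\theta$-oscillation genuinely vanishes in the limit there (handling the case of two zeros and the degenerate case $\rho_k\not\to0$), together with verifying completeness of the limit so that the classification of \cite{DS} applies. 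The compactness in Step 1 and the quotation of the classification in Step 3 are routine given the a priori estimates of Section \ref{sec-apriori}.
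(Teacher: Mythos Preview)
Your overall strategy (extract a limit by compactness, then classify via \cite{DS}) matches the paper, but you have made the argument much harder than necessary and introduced two genuine gaps.

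The main misconception is Step~2. The classification in \cite{DS} applies to \emph{all} complete eternal solutions of $\bar u_t=\Delta\log\bar u$ on $\R^2$ with bounded curvature, not just rotationally symmetric ones: the conclusion is that any such solution is either a cigar or a constant. So there is no need to prove $\bar{\bar u}_\theta\equiv 0$, and all of your Step~2 (which you yourself flag as ``the main obstacle'') can be deleted. The paper never argues rotational symmetry of the limit.

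With Step~2 gone, two real gaps remain. First, in Step~1 your upper bound on $\bar u_k$ is only valid away from the origin (from $r^2\bar u_k\le 1$), and the sentence ``the apparent puncture at $0$ is removable'' does not supply one near $0$; the monotonicity $\partial_t\bar u_k\le 0$ gives only a \emph{lower} bound at $0$ going backward. The paper obtains a uniform upper bound everywhere at $t=0$ by a clean elliptic argument: since $\Delta\log\bar u_k\le 0$, the angular average $\bar U_k(r)=\int_0^{2\pi}\log\bar u_k\,d\theta$ is nonincreasing in $r$, hence $\bar U_k(r)\le\bar U_k(0)=0$; then the uniform bound $|(\log\bar u_k)_\theta|\le C$ from Lemma~\ref{lem-bounds} upgrades this to a pointwise upper bound. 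The lower bound comes from $-\Delta\log\bar u_k\le C$ by the same integration. These bounds are then propagated in $t$ using $|(\log\bar u_k)_t|=R_k\le M$. Second, in Step~3 you never rule out the constant limit $\bar{\bar u}\equiv\alpha$, which \cite{DS} allows. The paper dispatches this case in two lines: if $\bar u_k\to\alpha$ locally uniformly, then for any fixed $r$ one has $(\rho_k r)^2\bar u(\rho_k r,\theta,t_k)\ge r^2\alpha/2$ for large $k$, and choosing $r^2>2/\alpha$ contradicts the uniform bound $r^2\bar u\le 1$ from Lemma~\ref{lem-bounds}.
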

\begin{proof} It is more convenient to switch for the moment to polar coordinates, defining 
$\bar u_k (r,\theta,t) = \bar u(\rho_k r, \theta,t+t_k) \, \rho_k^2$. 
 We will first show the bounds
\begin{equation}
\label{eqn-buk1}
 - C \, (1+ r^2)  \leq \log \buk (r,\theta,0) \leq C 
 \end{equation}
for a uniform constant $C$ (independent of $k$). To this end, we begin by observing that $\log \buk$ satisfies the elliptic equation
$$\Delta \log \buk = - R_k \, \buk$$
where $R_k(r,\theta,t) = R(\rho_k \, r,\theta,t+t_k)$ satisfies the uniform bound 
$$0 < R_k \leq M.$$
Set 
$${\bar U}_k  (r) = \int_0^{2\pi} \log \buk (r,\theta,0) \, d\theta, \qquad \,\, r \geq 0$$  
and observe that by integrating the inequality 
$$\Delta \log \bar u_k (\cdot,0) \leq 0$$
 in $\theta$ we obtain the differential inequality
$$\Delta {\bar U}_k = r^{-1} ( r\, {\bar U}_k'(r))' \leq 0.$$
Since $\lim_{r \to 0} r\, {\bar U}_k'(r) =0$, we readily conclude that ${\bar U}_k(r)$ is decreasing in $r$, hence 
$$\int_0^{2\pi} \log \buk (r,\theta,0) \, d\theta \leq  \log \buk (0,0)=0.$$
In addition, by \eqref{eqn-bounds-theta} we have $|(\log \buk)_\theta(\cdot,0) | \leq C$, for a uniform constant $C$. The last two inequalities 
clearly imply  the  bound from  above in \eqref{eqn-buk1}.
For the bound from below observe that 
$$- \Delta \log \buk (\cdot,0) = R_k \buk (\cdot,0) \leq C$$
for a uniform constant $C$, which gives (after integration in $\theta$) the differential inequality
$$ - r^{-1} ( r\, {\bar U}_k'(r))' \leq C.$$
The desired bound now readily follows by integrating  in $r$ and using \eqref{eqn-bounds-theta}. This proves 
\eqref{eqn-buk1}. 

Now for a given $\tau >0$, we  choose $k$ sufficiently large so that $t_k + \tau < -1$, hence 
$$ \max_{ \R^2 \times (-\infty ,\tau]}   R_k  \leq \max_{ \R^2 \times (-\infty ,-1]}  R \leq M$$
for a uniform constant $M$. Since $(\log \buk)_t = - R_k$, from \eqref{eqn-buk1} we readily conclude the bounds
$$
 - C(\tau)  \, (1+ r^2)  \leq \log \buk (r,\theta,t) \leq C(\tau)\qquad   \mbox{on} \,\, \R^2 \times [-\tau,\tau]
 $$
for a constant $C(\tau)$ that depends on $\tau$ but is uniform in $k$. Exponentiating gives us the bounds
$$0 < c(\tau, r) \leq \buk (r,\theta,t)   \leq C(\tau) < \infty\qquad   \mbox{on} \,\, \R^2 \times [-\tau,\tau].$$
Standard  parabolic PDE arguments imply that the sequence $\{ \bar u_k \}$ is equicontinuous on compact subsets 
of $\R^2 \times (-\infty,\infty)$, hence,  passing to a subsequence,  $\{ \bar u_k \}$
 converges, uniformly on compact subsets of $\R^2 \times (-\infty, \infty)$,
to an eternal solution $\bbbu$ of equation 
$$\bbbu_t = \Delta \log \bbbu\qquad \mbox{on} \,\, \R^2 \times (-\infty,\infty)$$
which in addition satisfies the bound 
$0 <  \bbbu (\cdot,t)  \leq C(t)$, 
for all $t$.

The result in \cite{DS} now shows that $\bbbu $ is either a  cigar solution  or the constant solution $\bbbu \equiv \alpha$. In the latter case,  given $r  >0$, we may find $k$ sufficiently large (depending on $r$) so that  
$$\buk (r,\theta,0):= \rho_k^2\, u(\rho_k r,  \theta, t_k)   \geq \frac \alpha 2$$
for all $\theta \in [0,2\pi]$. 
It follows that that 
$$(\rho_k r)^2 \,  \bbu(\rho_k r,  \theta, t_k)  \geq \frac {r^2 \alpha}{2}.$$
This will contradict our uniform bound $r^2 \, \bbu(r,t) \leq 1$ shown in Lemma \ref{lem-bounds} if we choose $r^2 =4/a$.  

We conclude that our limit  $\bbbu$ is a cigar solution which in standard plane coordinates $(x,y)$ takes the  form \eqref{eqn-cigar}.
The proof of the lemma  is now complete.
\end{proof}

We will now proceed  to the proof of Lemma \ref{lem-limit}.

\begin{proof}[Proof of Lemma \ref{lem-limit}] We begin by noticing that our quantity $Q(\psi,\theta,t)$ becomes identically equal to zero if
$v$ is either the cigar solution or the cylinder. Hence, the convergence of 
$v(\cdot,t)$ to the cylindrical metric  in $C^\infty(S^2\setminus \{ S,N\})$ readily shows that 
$Q(\cdot,t) $ converges uniformly to zero as $t \to -\infty$ on compact subsets of $S^2\setminus \{ S,N\}$. 

To prove the lemma, we  argue by contradiction. If the conclusion of the lemma  doesn't  hold,  then there
exists a sequence of times $t_k$ and points $P_k \in S^2$ such that 
\begin{equation}
\label{eqn-QPk}
Q(P_k,t_k) \geq \e >0.
\end{equation} 
It follows from  the above discussion that we may assume, without loss of generality, that $P_k \to S$ as $k \to \infty$, where $S$ denotes the
south  pole of the sphere corresponding to $\psi=-\pi/2$ in the chosen coordinates. Denote by $\bar P_k = (r_k,\theta_k)$ the
polar  coordinates of the points $P_k$ on the plane obtained by projecting $S^2\setminus \{ N \}$ onto $\R^2$ and mapping $S$ to the origin.

Set  $\rho_k^2 : = (\bar u(0,t_k))^{-1}$ and let $\buk$ be  the sequence 
of rescaled solutions defined by \eqref{eqn-ukkk} and used in Lemma \ref{lem-cigar}. 
We will separate between the following two cases:

\smallskip

\noindent{\em Case 1: We have $\liminf_{k \to \infty} r_k /\rho_k  < \infty$}. 

\smallskip
In  this case, we may assume without
loss of generality,  that $(\bar r_k,\theta_k) := ( r_k / \rho_k, \theta_k)  \to (r_0,0) $, 
with $r_0 < \infty$  (otherwise we pass to a subsequence and rotate in $\theta$).  Since  
$\buk(\bar r_k,\theta_k, 0) = \rho_k^2 \, \bu (\bar r_k \rho_k,\theta_k, t_k)$,  
the convergence of $\buk$ to the cigar  solution readily implies that 
$$\lim_{k \to \infty} \bar Q_k(\bar r_k, \theta_k,0) =0$$
 (where $\bar Q_k$ is
our given quantity corresponding to $\buk$ when expressed in polar coordinates on the plane). However, since this quantity is dilation
invariant, we have $$\bar Q_k(\bar r_k, \theta_k,0) = Q( r_k, \theta_k, t_k)= Q(P_k, t_k) \geq \e >0$$ 
which contradicts that the limit is zero. 
\smallskip

\noindent{\em Case 2: We have $\liminf_{k \to \infty}  r_k /\rho_k  = +\infty$}.  

\smallskip

It is more convenient to work in cylindrical coordinates and set 
$${\hat u}(s,\theta,t) = r^2 \, \bu (r,\theta,t), \qquad r=e^s$$
recalling that ${\hat u}$ satisfies the equation \eqref{eqn-hatu}. 
We set
$$\hat U(s,t):= \int_0^{2\pi} \log {\hat u}(s,\theta,t)\, d\theta$$
and observe that   that since $\Delta_c \log {\hat u} = \hat u_t \leq 0$,  we have ${\hat U}_{ss}(s,t) \leq 0$, hence ${\hat U}_s$ is non-increasing in $s$. In addition, by \eqref{eqn-uU}, we 
 have
$$\log {\hat u}(s,\theta,t) = \log u(\psi,\theta,t) - 2\log \cosh s$$
hence 
$$\lim_{s\to -\infty} {\hat U}_s(\cdot,t) = 2\qquad \mbox{and} \qquad \lim_{s\to \infty} {\hat U}_s 
(\cdot,t)= -2, \qquad \forall t \in (-\infty,0).$$
It follows that 
$$|{\hat U}_s(\cdot,t)| \leq 2, \qquad \forall t \in (-\infty,0).$$

We claim that if  $s_k:=\log r_k$, we have 
\begin{equation}
\label{eqn-boundF}
\hat U(s_k,t_k) \geq -C 
\end{equation}
for some constant $C>0$.  To this end, choose $\hat r$ sufficiently large so that if $\bbbu(r,\theta, t)$ is the cigar solution given in \eqref{eqn-cigar} expressed 
in polar coordinates, then
$$\hat r^2 \, \bbbu (\hat r,\theta, 0) \geq \frac {2\al}3.$$
This is possible because  $\lim_{r \to +\infty} r^2 \,  \bbbu(r,\theta, 0) = \alpha$. 
Since $\hat r^2 \, \buk (\hat r, \theta, 0) \to \hat r^2 \, \bbbu (\hat r,\theta, 0)$, as $k \to \infty$, we must have 
\begin{equation}
\label{eqn-bu32}
\hat r^2 \rho_k^2 \, \bu (\rho_k \hat r, \theta,t_k)  \geq  \frac{\al}2
\end{equation}
if $k$ is sufficiently large and $\theta \in [0,2\pi]$. It follows that if  $\hat s_k := \log (\hat r  \rho_k)$, then 
$${\hat u}(\hat s_k,\theta_k,t_k) \geq \frac \alpha 2.$$
Since $ r_k /\rho_k \to +\infty$, we may assume that $ \hat r  \rho_k < < r_k  $ which in particular implies that  $\hat s_k < s_k$. 
By \eqref{eqn-bounds-theta}
and the  bound from below on ${\hat u}$, we have ${\hat U}(\hat s_k,t_k) \geq -C$, for a uniform constant $C$. 

We will now conclude that the bound \eqref{eqn-boundF} holds. If ${\hat U}(s_k,t_k) \to 0$, as $k \to \infty$, then it  obviously holds. 
Otherwise,  since  $\lim_{s \to -\infty}  {\hat U}(s,t_k) = -\infty$
and $\lim_{t_k \to -\infty} {\hat U}(s,t_k) =0$ on compact subsets of $\R$ (remember
$ \lim_{t \to -\infty} {\hat u}(s,\theta,t) =\mu$ on compact subsets of $\R\times [0,2\pi]$
and we have assumed that $\mu=1$) we easily conclude that ${\hat U}_s (s,t_k)  \geq 0$ for 
$s \leq s_k$ (recall that  $s_k:=\log r_k  \to -\infty).$ It follows that
${\hat U}(s_k,t_k) \geq {\hat U}(\hat s_k,t_k) \geq -C$, which proves  \eqref{eqn-boundF}. 

\smallskip
For the given sequences $t_k \to -\infty$ and $s_k \to -\infty$, we   define the translating solutions
$${\hat u}_k(s,\theta,t):= {\hat u}(s+s_k,\theta,t+t_k)$$
which also satisfy equation \eqref{eqn-hatu} on $- \infty < t < |t_k|$. 
Set  $${\hat U}_k (s,t):= \int_0^{2\pi} \log {\hat u}_k(s,\theta,t)\, d\theta.$$
Then, $|({\hat U}_k)_s| \leq 2$ and $ {\hat U}_k \leq 0$ on 
$\R \times (-\infty,|t_k|-1)$,  since $|{\hat U}_s| \leq 2$ and ${\hat u} \leq 1$ 
on $\R \times (-\infty,-1)$. 
In addition,  by \eqref{eqn-boundF},
$ {\hat U}_k (0,0) \geq -C$ and also $|({\hat U}_k)_t| \leq C$, for all $s \in R$ and $t < |t_k|-1$  for a uniform constant $C$ (since the scalar curvature  $R(\cdot,t)$ is uniformly bounded on $t < -1$). 

It follows that  the sequence $\{ {\hat U}_k \} $ is uniformly bounded on compact sets in  space and time and by \eqref{eqn-bounds-theta} 
the same holds for the sequence $\log {\hat u}_k $. Hence, for a given compact set $K \subset \R \times [0,2\pi] \times (-\infty,\infty)$,
we have 
$$ 0 < c < {\hat u}_k(s,\theta,t) \leq 1, \qquad (s,\theta,t) \in K$$
if $k$ is chosen sufficiently large so that $K \subset \R \times [0,2\pi] \times (-\infty,|t_k|-1)$. Standard parabolic PDE arguments
imply that, passing to a subsequence,  ${\hat u}_k \to \tilde {\hat u}$ in $C^\infty$  on compact sets of $\R \times [0,2\pi] \times (-\infty,\infty)$. 
The function $\tilde {\hat u}$ is a smooth eternal solution of equation \eqref{eqn-hatu} on $\R \times [0,2\pi] \times (-\infty,\infty)$. 

\smallskip 
We will  next show that $\tilde  {\hat u} \equiv \gamma$, for some constant 
$\gamma$, which implies   that $\lim_{k \to \infty} Q(r_k,\theta_k,t_k)=0$, contradicting
our  assumption \eqref{eqn-QPk}.  

\begin{claim} If $R(s,\theta,t)$ is the scalar curvature in cylindrical coordinates, then we have 
\begin{equation}
\label{eqn-Rkkk}
\lim_{t_k \to - \infty} R(s_k,\theta_k,t_k) =0.
\end{equation}
\end{claim}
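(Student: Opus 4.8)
The plan is to show that the backward limit of the scalar curvature, measured at the moving point $(s_k, \theta_k)$ in cylindrical coordinates, vanishes. The starting point is the observation that we have two pieces of information: the total curvature of $g(t_k)$ on $\R^2$ is exactly $8\pi$ (the area of $S^2$ times its Gauss curvature $+1$, equivalently the topological constraint $\int R\,d\mu_g = 8\pi\chi(S^2)/2$), while the ``cigar part'' of the solution, captured by the rescaling $\bar u_k$ of Lemma \ref{lem-cigar}, already carries a definite amount of curvature concentrated near the origin, namely the total curvature $4\pi\alpha/\alpha = 4\pi$ of the limiting cigar $\bbbu$ (the cigar soliton on $\R^2$ has total curvature $4\pi$). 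Since $r_k/\rho_k \to \infty$ in Case 2, the region where $Q(P_k,t_k)\ge \e$ lives at scale $r_k$, which is much larger than the cigar scale $\rho_k$; so the curvature near $s_k$ and the curvature concentrating at the cigar scale are spatially separated.

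First I would quantify the curvature carried by the cigar: for any $\eta>0$ there is a radius $\hat r = \hat r(\eta)$ and an index $k_0$ so that
$$\int_{\{|x|\le \hat r\rho_k\}} R\,\bar u\,(x,t_k)\,dx \ge 4\pi - \eta \qquad \text{for } k\ge k_0,$$
by the uniform-on-compact-sets convergence $\bar u_k \to \bbbu$ established in Lemma \ref{lem-cigar} together with $\int_{\R^2} R_\infty^{\mathrm{cigar}}\,\bbbu = 4\pi$. Symmetrically — and this is where I expect the main work — I would argue that a neighborhood of the north pole $N$ (i.e.\ the region $\{|x|\ge r_0\}$ for large $r_0$, which via \eqref{eqn-NS} corresponds to a small disk around the origin in the $(\zeta,\xi)$ chart) must ``at $t=-\infty$'' carry almost all of the remaining $4\pi$ of curvature, because the convergence $v(\cdot,t)\to v_\infty = \cos^2\psi$ is $C^\infty$ on compact subsets of $S^2\setminus\{S,N\}$ and $v_\infty$ is the smooth cylindrical metric there, which has zero curvature contribution in the limit — so the curvature must escape to the two poles, with the share at the south pole $S$ pinned to $4\pi$ by the cigar computation. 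Consequently the curvature in any fixed annulus $\{\hat r\rho_k \le |x| \le R_0\}$ away from both poles tends to $0$ as $k\to\infty$.

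Next, I would translate this into a pointwise statement at $(s_k,\theta_k)$ using the Brezis–Merle $L^\infty$ bound, Lemma \ref{lem-bm}: since $s_k\to-\infty$ (so $r_k = e^{s_k}\to 0$) but $r_k/\rho_k\to\infty$, the ball $B_{\rho}(\bar P_k)$ at an appropriate small fixed scale $\rho$ sits inside the annulus away from both poles for $k$ large, hence
$$\int_{B_\rho(\bar P_k)} R\,\bar u\,(x,t_k)\,dx \le 4\pi - 2\delta$$
for some $\delta>0$; Lemma \ref{lem-bm} then gives a uniform upper bound on $u$, i.e.\ a positive lower bound on $v$, near $\bar P_k$ at time $t_k$. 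Combined with $v_t\ge 0$ and the a priori derivative estimates of Section \ref{sec-apriori} (in the rescaled/translated picture this is exactly the statement that $\hat u_k \to \tilde{\hat u}$ with $\tilde{\hat u}$ bounded above and below on compact sets, already obtained in the argument preceding the Claim), standard interior parabolic estimates for \eqref{eqn-hatu} give uniform bounds on all derivatives of $\log\tilde{\hat u}$, hence $R(s_k,\theta_k,t_k) = R_k^{\mathrm{transl}}(0,\theta_k,0) \to R_{\tilde{\hat u}}(0,0,0)$.

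Finally I would show $R_{\tilde{\hat u}}\equiv 0$, i.e.\ that the eternal limit $\tilde{\hat u}$ of the translated solutions is the flat cylinder $\tilde{\hat u}\equiv\gamma$. Here the bounds $|(\hat U_k)_s|\le 2$, $\hat U_k\le 0$, $\hat U_k(0,0)\ge -C$ and $|(\hat U_k)_t|\le C$ already recorded give that $\hat U := \int_0^{2\pi}\log\tilde{\hat u}\,d\theta$ is a globally defined eternal solution of the one-dimensional averaged flow, bounded above by $0$, with $|\hat U_s|\le 2$, finite at the origin, and monotone nonincreasing in $t$ (since $R\ge 0$); since the total curvature carried by $\tilde{\hat u}$ on any fixed compact set is squeezed to $0$ by the annulus estimate above, $\tilde{\hat u}$ has zero scalar curvature, so $\Delta_c\log\tilde{\hat u}=0$, and the same harmonic-function-with-linear-growth argument used in the proof of Theorem \ref{prop-limit} forces $\log\tilde{\hat u} = a_1 + a_2 s$; the bound $\tilde{\hat u}\le 1$ together with $|\hat U_s|\le 2$ and the fact that $\tilde{\hat u}$ is an eternal solution of \eqref{eqn-hatu} with $\partial_t\log\tilde{\hat u} = 0$ forces $a_2 = 0$, so $\tilde{\hat u}\equiv\gamma$ is flat and $R(s_k,\theta_k,t_k)\to 0$. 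The main obstacle is the curvature-accounting step: making rigorous that away from the two poles the curvature density integrates to something strictly less than $4\pi$ on small balls, uniformly along the sequence, which is exactly the $8\pi = 4\pi + 4\pi$ splitting between the south-pole cigar and the north-pole contribution, with nothing left over in between.
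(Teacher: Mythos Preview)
Your core idea---the Gauss--Bonnet budget $8\pi = 4\pi + 4\pi$ split between the two poles, with the south-pole share pinned by Lemma~\ref{lem-cigar}---is exactly the paper's. The paper, however, runs it as a direct contradiction: assuming $R(s_k,\theta_k,t_k)\ge\delta$, one uses the already-established $C^\infty$ convergence $\hat u_k\to\tilde{\hat u}$ and the lower bound $\hat u_k\ge c>0$ to produce a definite packet of curvature $\iint_{I_\epsilon(s_k)}R\hat u\ge\hat\delta$; since $\hat s_k<s_k-1$ this sits strictly above the south-pole cigar region, and adding it to the $\approx 4\pi$ there gives $\int_{-\infty}^{s_k+\epsilon}\int_0^{2\pi}R\hat u>4\pi+\eta/2$. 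Then the paper applies Lemma~\ref{lem-cigar} \emph{again at the north pole} to get another $\approx 4\pi$ on $\{s>\tilde s_k\}$, and the sum exceeds $8\pi$.

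Your version has a genuine gap precisely at the step you flagged as ``the main work.'' The argument ``$v\to\cos^2\psi$ in $C^\infty$ on compact subsets of $S^2\setminus\{S,N\}$, so curvature escapes to the poles'' only controls the curvature on \emph{fixed} annuli $\{a\le |x|\le b\}$, $0<a<b<\infty$. But the point $(s_k,\theta_k)$ lives in the region $\{\hat r\rho_k<|x|<a\}$, which is \emph{not} such a fixed annulus: its inner boundary collapses to the south pole. Knowing only that the south-pole cigar carries $\approx 4\pi$ and that fixed annuli carry $o(1)$ leaves the possibility that the remaining $\approx 4\pi$ sits in $\{\hat r\rho_k<|x|<a\}$ rather than near $N$. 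To rule this out you must show the north pole carries $\ge 4\pi-\eta$, and for that you need Lemma~\ref{lem-cigar} at $N$---exactly what the paper does---not the compact-subset convergence.

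Two further remarks. First, the Brezis--Merle step and the subsequent parabolic bounds are redundant (as you note yourself): the estimates $0<c\le\hat u_k\le 1$ on compact sets and the $C^\infty$ convergence $\hat u_k\to\tilde{\hat u}$ are established in the paragraph \emph{preceding} the Claim. Second, your final block---showing $\tilde{\hat u}\equiv\gamma$ via the harmonic-function argument---is not part of the Claim at all; in the paper that is the step carried out \emph{after} the Claim, using the Claim (together with the strong maximum principle for $\tilde R\ge 0$) as input. So once the gap above is closed, the contradiction route is both shorter and cleaner than passing through the flatness of $\tilde{\hat u}$.
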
 
\begin{proof}
Assume the claim is not true, that is, there exists a $\delta > 0$ and a subsequence $(s_k, \theta_k, t_k)$ so that $R(s_k,\theta_k,t_k) \ge \delta > 0$, for all $k$. Passing to a subsequence,  $\theta_k \to \theta_0$.
Since  $R_k:=- \Delta \,  {\log \hat  u}_k/{\hat u}_k$ satisfies 
$R_k(s,\theta,t)=R(s+s_k,\theta,t+t_k)$ and $R_k \to \tilde R:= - \Delta \log \tilde {\hat u} / \tilde {\hat u}$ uniformly on compact sets,  we conclude that 
$\tilde R (0,\theta_0,0):=\lim_{k \to \infty} R_k(0,\theta_k,0) \geq \delta.$
 
It follows that there  exists an $\epsilon > 0$ and $k_0$ so that 
\begin{equation}
\label{eq-lower-R11}
R(s+s_k,\theta,t_k) \ge \frac \delta 2, \qquad \mbox{for all}\,\, (s,\theta) \in I_{\epsilon} := [-\epsilon,\epsilon]\times [\theta_0 - \epsilon, \theta_0 + \epsilon].
\end{equation}
On the other hand, as we have proved earlier, we have $0 < c < \hat{u}_k(s,\theta,0) \le 1$, for all $(s,\theta)\in I_{\epsilon}$. Combining this with (\ref{eq-lower-R11}) yields 
$$\iint_{I_{\epsilon}} R_k (\cdot,t_k)\, \hat{u}_k (\cdot,0) \, ds\,d\theta \ge \hat{\delta}> 0\qquad k\ge k_0$$
or equivalently, 
\begin{equation}
\label{eqn-total-curv}
\iint_{I_{\epsilon}(s_k)} R(\cdot,t_k) \, \hat u (\cdot,t_k)  \, ds\,d\theta \ge \hat{\delta} > 0\qquad k\ge k_0
\end{equation}
where $I_{\epsilon}(s_k) :=[s_k-\epsilon,s_k+\epsilon]\times [\theta_0 - \epsilon, \theta_0 + \epsilon]$. 

\smallskip

Recall that by Lemma \ref{lem-cigar},  $\bar{u}_k(x,y,t) :=\rho_k^2 \,  \bar{u}(\rho_k x, \rho_k y, t+t_k)$ converges uniformly on compact subsets of $\mathbb{R}\times (-\infty,\infty)$ to a cigar solution. This implies that there exists a compact ball $B(0,\bar{r})$ (with $\bar r$ sufficiently large depending on $\eta$ and  $\eta$ chosen arbitrarily small) so that
$$\big |\iint_{B(0,\bar{r})} R_k \bar{u}_k (\cdot,0)  \, dx \, dy- 4\pi \big | < \eta$$ 
or equivalently, 
$$\big |\iint_{B(0,\rho_k \bar{r})} R(\cdot,   t_k)\, \bar{u}(\cdot,t_k)\, dx\, dy - 4\pi \big | < \eta.$$
We may also choose $\bar r$ so that $\bar r \geq \hat r$, where $\hat r$ is chosen as before so that 
 \eqref{eqn-bu32} holds. 
 
Set $r_k = e^{s_k}$ and $\bar{r}\rho_k = e^{\hat{s}_k}$. 
Recall that since we are in the case where $r_k / \rho_k \to +\infty$, we may assume that  $\hat s_k < s_k-1$. The last integral inequality in  cylindrical coordinates gives 
\begin{equation}
\label{eqn-oneside}
\big |\int_{-\infty}^{\hat {s}_k} \int_0^{2\pi} R (\cdot ,t_k)\,  \hat u(\cdot,t_k)\, d\theta \, ds - 4\pi \big | < \eta.
\end{equation}
Combining  (\ref{eqn-total-curv}) with  (\ref{eqn-oneside}) and  we choosing  $\eta << \hat{\delta}$
we obtain that  
\begin{equation}
\label{eqn-oneside2}
\int_{-\infty}^{s_k+\e}\int_0^{2\pi} R (\cdot,t_k)\hat u (\cdot,t_k)  \, d\theta\, ds > 4\pi + \frac \eta 2.
\end{equation}

Recall that $s_k \to - \infty$. Lemma \ref{lem-cigar} may be applied near the north pole $N$ of $S^2$  corresponding to
$\psi=\pi/2$, to also conclude that, after passing to a subsequence,  the rescaled solutions converge to a cigar.
In our chosen cylindrical coordinates this would imply,  that for the given sequence of times $t_k \to -\infty$, 
after passing to a subsequence,  there exists a sequence $\tilde s_k \to + \infty$  
for which
\begin{equation}
\label{eqn-otherside}
\big |\int_{\tilde  {s}_k}^{+\infty}  \int_0^{2\pi} R (\cdot ,t_k)\,  \hat u(\cdot,t_k)\,  d\theta \, ds  - 4\pi \big | <
\frac  \eta 4.
\end{equation}
Combining (\ref{eqn-oneside2}) with  (\ref{eqn-otherside}) we conclude  that the total curvature
$$\int_{-\infty}^{+\infty}\int_0^{2\pi} R (\cdot,t_k)\hat u (\cdot,t_k)  \, d\theta\, ds > 8 \pi$$
which is a contradiction  to the total curvature of our evolving compact surface  being equal 
always to $8\pi$. This concludes the proof of the claim.

\end{proof}

To finish the proof of the lemma, we will first show  that  $\tilde {\hat u}\equiv \gamma$, for a constant $\gamma >0$. To this end,
we will first prove  that the scalar curvature  $\tilde   R:= - \Delta \log \tilde  {\hat u} / \tilde  {\hat u}$ of the  metric $\tilde g := \hat {\hat u} \, (ds^2 +d\theta^2)$
is identically equal to zero. Clearly $\tilde  R \geq 0$. If we prove that $\tilde  R (0,\theta_0, 0) =0$, 
for some point $\theta_0 \in [0,2\pi]$, then $\tilde  R \equiv 0$ by the strong maximum
principle. But this readily follows from   \eqref{eqn-Rkkk} by choosing a   subsequence so that $\theta_k \to \theta_0$
and passing to the limit,  similarly as in the proof of the previous claim. 

To conclude that $\tilde {\hat u}$ is a constant, for a fixed $t$, set $w:= \log \tilde {\hat u}$ and observe that $w$ 
satisfies $\Delta_c w=0$ and  $w \leq 0$ on $\R \times [0,2\pi]$ (since $\hat u_k \leq 1$). 
We may view  $w$ as a harmonic function on $\R^2$ by extending it in the $\theta$ direction
so that it remains $2\pi$ periodic. The bound $w \leq 0$ then implies that $w$ must be a constant function, which shows that $\log \tilde {\hat u}(\cdot,t)=c(t)$, for all $t$. Since $R \equiv 0$,
we conclude that $c(t)$ is constant in $t$, hence  $\log \tilde {\hat u}\equiv c$.

\smallskip
We will now   conclude the proof of  Lemma \ref{lem-limit}. We   have just shown that ${\hat u}_k:=\hat u(s+s_k,\theta,t_k) \to \gamma$, for some constant $\gamma >0$, and the convergence  is 
in $C^\infty$ on compact subsets of $\R \times [0,2\pi]$.
Going back to the plane coordinates we conclude that $u_k:= r_k^2 \, u(r_k r,\theta,t_k) \to \gamma /r^2$
in $C^\infty$  on compact subsets of the punctured plane  $0 < r < \infty$. 
Notice that $\gamma/r^2$ is the cylindrical metric in plane coordinates. Since our quantity $Q$ is dilation invariant
and vanishes identically  on the cylinder,  this implies  that $Q(r_k,\theta, t_k) \to 0$ which contradicts \eqref{eqn-QPk}. 

\end{proof}

As an immediate consequence of Lemmas \ref{lem-mpQ} and \ref{lem-limit} we obtain

\begin{cor}\label{cor-ident} We have $Q(\cdot,t) \equiv 0$, for all $-\infty < t < 0$. Consequently,  the pressure function $\bv:=\bar v_N$
satisfies the identities
\begin{equation}\label{eqn-ide1} 
(a) \,\,\, {\bv}_{xxx}=3\, \bv_{xyy} \quad \mbox{and} \quad (b) \,\,\,  \bv_{yyy}=3\, \bv_{xxy} 
\end{equation}
The above identities  also imply the identities 
\begin{equation}\label{eqn-ide2} 
(a) \,\, \, \bv_{xxxx}= \bv_{yyyy} \quad \mbox{and} \quad  (b) \,\,\,  \bv_{xxxy}= \bv_{yyyx}=0. 
\end{equation}

\end{cor}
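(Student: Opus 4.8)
The plan is to obtain $Q(\cdot,t)\equiv 0$ immediately from Lemmas \ref{lem-mpQ} and \ref{lem-limit}, and then to read off the pointwise identities by elementary differentiation. First I would record that $Q$ is non-negative: in either stereographic chart one has $Q_S=\bv_S\,[\,(\bv_S)_{xxx}-3(\bv_S)_{xyy})^2+((\bv_S)_{yyy}-3(\bv_S)_{xxy})^2\,]\ge 0$ since $\bv_S>0$ (it is a pressure function), and by \eqref{eqn-QNS} the two coordinate expressions agree, so $Q$ is a well-defined non-negative function on $S^2\times(-\infty,0)$. Now Lemma \ref{lem-mpQ} asserts that $Q_{\max}(t)$ is non-increasing in $t$, so for any fixed $t<0$ and any $s<t$ we have $0\le Q_{\max}(t)\le Q_{\max}(s)$; letting $s\to-\infty$ and invoking Lemma \ref{lem-limit} forces $Q_{\max}(t)=0$. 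Hence $Q(\psi,\theta,t)=0$ for all $(\psi,\theta)\in S^2$ and all $t<0$.

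Next I would observe that $Q\equiv 0$, applied in the $N$-chart, forces both squared terms in the definition of $Q_N$ to vanish identically, because $\bv:=\bar v_N>0$. This is exactly \eqref{eqn-ide1}, namely $\bv_{xxx}=3\,\bv_{xyy}$ and $\bv_{yyy}=3\,\bv_{xxy}$ (the same reasoning in the $S$-chart gives the corresponding identities for $\bar v_S$, but only the $N$-version is needed for the Corollary).

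Finally, \eqref{eqn-ide2} would follow by differentiating \eqref{eqn-ide1}: differentiating (a) in $x$ and (b) in $y$ gives $\bv_{xxxx}=3\,\bv_{xxyy}=\bv_{yyyy}$; differentiating (a) in $y$ and (b) in $x$ gives $\bv_{xxxy}=3\,\bv_{xyyy}$ and $\bv_{yyyx}=3\,\bv_{xxxy}$, and since $\bv_{xxxy}=\bv_{yyyx}$ this yields $\bv_{xxxy}=9\,\bv_{xxxy}$, hence $\bv_{xxxy}=\bv_{yyyx}=0$. There is no genuine obstacle in this Corollary — all the analytic work is already contained in Lemmas \ref{lem-mpQ} and \ref{lem-limit}; the only point requiring a little care is that the monotonicity direction in Lemma \ref{lem-mpQ} must be matched with the non-negativity of $Q$ so that the squeeze between $0$ and $Q_{\max}(-\infty)=0$ actually closes.
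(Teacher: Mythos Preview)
Your proposal is correct and mirrors exactly what the paper does: it declares the Corollary an ``immediate consequence of Lemmas \ref{lem-mpQ} and \ref{lem-limit}'', and your write-up simply spells out that immediate squeeze $0\le Q_{\max}(t)\le Q_{\max}(s)\to 0$ together with the elementary differentiation for \eqref{eqn-ide2}. One small slip: in deriving \eqref{eqn-ide2}(b) you wrote ``since $\bv_{xxxy}=\bv_{yyyx}$'', which is not a Schwarz identity; what you need (and what actually gives your $\bv_{xxxy}=9\,\bv_{xxxy}$) is $\bv_{xyyy}=\bv_{yyyx}$, after which $\bv_{xxxy}=3\bv_{xyyy}=3\bv_{yyyx}=9\bv_{xxxy}$.
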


We will now show that  $\bv(\cdot,t)$ must be a fourth order polynomial of  a certain form.

\begin{lem}\label{lem-poly}  Let  $\bv(x,y)$ be a smooth function on $\R^2$ 
 satisfying \eqref{eqn-ide1}. Then, $\bv$ has the form
$$\bv(x,y)= a\,  ( (x-x_1)^2+(y-y_1)^2)^2 + q(x,y) $$
for some constants $a,x_1,x_2$ and  a quadratic polynomial $q(x,y)$. 
\end{lem}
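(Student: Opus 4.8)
The plan is to read the identities \eqref{eqn-ide1} through the Wirtinger operator $\partial_{\bar z}=\tfrac12(\partial_x+i\partial_y)$, where $z=x+iy$ are the standard complex coordinates on $\R^2=\mathbb{C}$. A direct expansion gives $\partial_{\bar z}^3=\tfrac18\big[(\partial_x^3-3\partial_x\partial_y^2)+i(3\partial_x^2\partial_y-\partial_y^3)\big]$, so the two real identities in \eqref{eqn-ide1} are together equivalent to the single complex equation $\partial_{\bar z}^3\bv=0$. Since $\bv$ is real, taking complex conjugates also yields $\partial_z^3\bv=0$.

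I would then integrate $\partial_z^3\bv=0$ three times in $z$. Because $\partial_z(\partial_z^2\bv)=0$ on the simply connected set $\mathbb{C}$, the function $\partial_z^2\bv$ is annihilated by $\partial_z$; subtracting $z\,\partial_z^2\bv$ from $\partial_z\bv$ removes another $\partial_z$, and one further step produces
\[
\bv=\frac{z^2}{2}\,f_2(\bar z)+z\,f_1(\bar z)+f_0(\bar z)
\]
for smooth functions $f_0,f_1,f_2$ with $\partial_z f_j\equiv0$. Applying $\partial_{\bar z}^3$ and using $\partial_{\bar z}^3\bv=0$ gives $\tfrac{z^2}{2}f_2'''+z\,f_1'''+f_0'''\equiv0$; for each fixed $\bar z$ this is a polynomial in $z$ of degree $\le2$ vanishing identically, so $f_0'''(\bar z)=f_1'''(\bar z)=f_2'''(\bar z)=0$, and since $\bar z$ is arbitrary each $f_j$ is a polynomial in $\bar z$ of degree $\le2$. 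Hence $\bv$ is a polynomial in $(z,\bar z)$ of bidegree at most $(2,2)$, $\bv=\sum_{0\le m,n\le2}a_{mn}z^m\bar z^n$, with $a_{nm}=\overline{a_{mn}}$ because $\bv$ is real; in particular $a:=a_{22}\in\R$.

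It remains to bring this polynomial to the stated form by completing the square in $|z|^2$. Its part of total degree $\ge3$ is $a_{22}z^2\bar z^2+a_{21}z^2\bar z+a_{12}z\bar z^2=a\,|z|^4+2|z|^2\,\mathrm{Re}(a_{12}\bar z)$, and choosing $z_1:=-a_{12}/(2a)$ (equivalently a center $(x_1,y_1)$) a short computation shows
\[
a\,|z|^4+2|z|^2\,\mathrm{Re}(a_{12}\bar z)=a\,|z-z_1|^4+\big(\text{a polynomial in }(x,y)\text{ of degree }\le2\big),
\]
so that $q(x,y):=\bv-a\,\big((x-x_1)^2+(y-y_1)^2\big)^2$ is a polynomial in $(x,y)$ of degree at most $2$, as required. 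The step that needs the most care is precisely this last one: the completion of the square uses $a\neq0$. When $a=0$ one must instead rule out a surviving genuine cubic term $2|z|^2\,\mathrm{Re}(a_{12}\bar z)$; this is done by invoking that $\bv$ is a pressure function and hence positive, so bounded below — and a real polynomial bounded below cannot have a nonconstant leading form of odd degree — which forces $a_{12}=0$ and $\bv=q$ quadratic in that case. Everything else is routine bookkeeping of which monomials survive the subtraction of $a\,|z-z_1|^4$.
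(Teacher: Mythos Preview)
Your argument is correct and takes a genuinely different route from the paper. The paper proceeds entirely in real variables: it first derives the fourth-order consequences \eqref{eqn-ide2} of \eqref{eqn-ide1}, uses them to see that $\bv_{xxx}$ depends only on $x$ and $\bv_{yyy}$ only on $y$, and then integrates step by step to pin down $\bv_{xx}$, $\bv_{yy}$, $\bv_{xy}$ explicitly as quadratic polynomials, finally subtracting the quartic $V=a\bigl((x-x_1)^2+(y-y_1)^2\bigr)^2$ with specific choices of $a,x_1,y_1$. Your observation that \eqref{eqn-ide1} is equivalent to $\partial_{\bar z}^{3}\bv=0$ (and, by reality, $\partial_z^{3}\bv=0$) is more conceptual: it immediately explains \emph{why} the answer is a polynomial of bidegree $(2,2)$ in $(z,\bar z)$, which is exactly the class of polynomials of the form $a\,|z-z_1|^4+q$ plus a possible cubic obstruction. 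The paper's approach is more hands-on but gives the constants directly; yours is shorter and makes the structure transparent.

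Two small remarks. First, the sentence ``for each fixed $\bar z$ this is a polynomial in $z$'' is not quite rigorous as written, since $z$ and $\bar z$ are not independent real variables; the clean way to extract $f_j'''\equiv 0$ from $\tfrac{z^2}{2}f_2'''+zf_1'''+f_0'''=0$ is simply to apply $\partial_z$ twice (the $f_j$ are antiholomorphic, so $\partial_z f_j'''=0$). Second, your treatment of the borderline case $a=0$ is actually more careful than the paper's: the paper sets $x_1=-C_1/(24a)$ without comment, tacitly assuming $a\ne 0$. As you noticed, the lemma as stated (with no positivity hypothesis) is in fact false when $a=0$, since e.g.\ $\bv=x^3+xy^2=\mathrm{Re}(z^2\bar z)$ satisfies \eqref{eqn-ide1} but is not of the required form. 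Your fix via boundedness below is exactly right in context, since in the paper $\bv$ is the (positive) pressure function; it just goes slightly beyond the stated hypotheses of the lemma.
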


\begin{proof} We will omit the details of calculations that can be checked in a straightforward manner by the reader.
We will also denote by $C,C_i$ various fixed constants. 
Identity \eqref{eqn-ide2}-(b) implies that $\bv_{xxx}=f_1(x)$ and $\bv_{yyy}=g_1(y)$, and by  \eqref{eqn-ide2}-(a)
we have $f_1(x)=C \, x+ C_1$, $g_1(y) = C\, y + C_2$, hence
$$\bv_{xx} = \frac C2 \, x^2 + C_1 \, x + g_2(y), \qquad \bv_{yy} = \frac C2 \, y^2 + C_2 \, y + f_2(x).$$
Combining the above identities with \eqref{eqn-ide1}, gives  
$$\bv_{xx} = \frac C2 x^2 + \frac C6 y^2 + C_1 x + \frac {C_2} 3 y + C_3, \quad
\bv_{yy} = \frac C2 y^2 + \frac C6 x^2 + C_2 y + \frac {C_1} 3 x + C_4.$$
Differentiating  these last  identities in $y,x$ respectively, gives
$$\bv_{xxy} = \frac C3 y + \frac{C_2}3, \qquad \bv_{xyy} = \frac C3 x + \frac{C_1}3$$
which after integration in $x,y$ respectively yield
$$\bv_{xy} = \frac C3 xy + \frac{C_2}3 x + g_3(y) = \frac C3 xy + \frac{C_1}3 y + f_3(x).$$
It follows that 
$$\bv_{xy} = \frac C3 xy + \frac{C_2}3 x + \frac{C_1}3 y + C_5.$$
If we set $q := \bv - V$, where 
$$V(x,y) = a\, ((x-x_1)^2+(y-y_1)^2)^2$$
with
$$a=\frac C{24}, \,\, x_1=- \frac {C_1}{24 a}, \, \,  y_1=- \frac {C_2}{24 a}$$
then a direct computation shows that $q$ satisfies
$$q_{xx}=C_3, \quad q_{yy}=C_4, \quad q_{xy}=C_5$$
from which the lemma readily follows. 
\end{proof}

We will next show that our solution $v$ has the particular form of the King-Rosenau solutions.

\begin{lem} Let $\bv(x,y,t)$ be an ancient solution of the equation
\begin{equation}\label{eqn-bvv} \bv_t = \bv \, \Delta \bv  - |\nabla \bv|^2\qquad \mbox{on} \,\, \R^2 \times (-\infty,0)
\end{equation}
of the form
$$ 
\bv(x,y,t)= a \, ( (x-x_1)^2+(y-y_1)^2)^2 + b\, (x-x_2)^2+ d \, (y-y_2)^2 + \rho \, xy  + c 
$$
where all $a,b,c,d,\rho$ and $x_i,y_i$ are functions of $t$. Assume in addition that 
\begin{equation}\label{eqn-ltv}
\lim_{t \to -\infty} \bv(x,y,t) = x^2+y^2
\end{equation}
uniformly on compact subsets of $\R^2$. Then, 
\begin{equation}\label{eqn-form1} 
\bv(x,y,t)= a(t)  \,  ( x^2+y^2)^2 + b(t) \,  (x^2+y^2)    + c(t) 
\end{equation}
for some functions of time $a(t), b(t),c(t)$ which are defined on $-\infty < t <0$.   

\end{lem}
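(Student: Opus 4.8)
\emph{Outline of a proof.} The plan is to substitute the polynomial ansatz into \eqref{eqn-bvv} and match coefficients degree by degree. Writing $\bar v=\sum_{k=0}^{4}P_k$ with $P_k$ the homogeneous part of degree $k$, the prescribed form forces $P_4=a\,(x^2+y^2)^2$, $P_3=-4a\,(x^2+y^2)(x_1x+y_1y)$, and $P_2,P_1,P_0$ of the corresponding shapes. A direct computation shows that the degree-six and degree-five homogeneous parts of $\bar v\,\Delta\bar v-|\nabla\bar v|^2$ cancel identically — this is precisely the algebraic fact that makes the class of such polynomials invariant under \eqref{eqn-bvv} — so both sides of the equation are polynomials of degree at most four. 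Equating the coefficients of $x^iy^j$ for $0\le i+j\le 4$ then produces a closed system of ODEs for the nine functions $a,b,c,d,\rho,x_1,y_1,x_2,y_2$; the top-degree comparison is $\dot a=2a(b+d)$.

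Next I would feed in the hypothesis \eqref{eqn-ltv}. Expanding $\bar v$ and using uniform convergence on compact sets forces $a\to0$, $b\to1$, $d\to1$, $\rho\to0$, $x_i\to0$, $y_i\to0$ and $c\to0$ as $t\to-\infty$; the same uniform convergence also pins down the location of the quartic bump, giving $a(x_1^2+y_1^2)\to0$. From $\dot a=2a(b+d)$ and $a\to0^{+}$, the coefficient $a(t)$ keeps a fixed sign on $(-\infty,0)$ (the borderline case $a\equiv0$, in which $\bar v$ is quadratic, is covered by the same, easier, argument, so I suppress it). It remains to prove that the non-radial part of $\bar v$ is absent, i.e. $b\equiv d$, $\rho\equiv0$ and $x_1=y_1=x_2=y_2\equiv0$, for then $\bar v=a(x^2+y^2)^2+b(x^2+y^2)+c$, which is \eqref{eqn-form1}.

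For the non-radial part I would linearise \eqref{eqn-bvv} at the static solution $\bar v\equiv x^2+y^2$: the linearisation sends $r^{k}e^{im\theta}$ to $\big((k-2)^2-m^2\big)r^{k}e^{im\theta}$, so among polynomials of degree $\le4$ the only directions in which a perturbation can decay as $t\to-\infty$ are the two radial ones, $1$ and $r^4$, while the angular mode $m=2$ is strictly stable and the modes $m=1$ ($x,y,r^2x,r^2y$) are marginal. The $m=2$ content of $\bar v$ is carried by the trace-free part of the degree-two coefficient form, and the degree-two comparison expresses it, up to fast-decaying terms, through $a$ and $(x_1,y_1)$; substituting this relation into the remaining equations, the $m=1$ data — collected into $\vec p:=(ax_1,ay_1)$ (parametrising $P_3$) and $\vec\nu:=(bx_2,by_2)$ (parametrising $P_1$) — satisfies a linear system $\tfrac{d}{dt}(\vec p,\vec\nu)=\Lambda(t)\,(\vec p,\vec\nu)$, in which the $\vec p$-equation is simply $\dot{\vec p}=4a\,\vec\nu$ and, using $a,c\to0$ like $e^{4t}$ together with $a(x_1^2+y_1^2)\to0$, the matrix $\Lambda(t)$ tends to $0$ and is integrable as $t\to-\infty$. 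A solution of such a system that also tends to $0$ as $t\to-\infty$ must vanish identically; hence $\vec p\equiv\vec\nu\equiv0$, so $x_1=y_1=x_2=y_2\equiv0$, and then the degree-two relation yields $b\equiv d$ and $\rho\equiv0$. This is \eqref{eqn-form1}.

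The main obstacle is this last step. The $m=1$ asymmetries sit in zero-eigenvalue directions of the linearisation, hence are invisible to the linear spectral picture, and their vanishing has to be extracted from the nonlinear, non-autonomous subsystem for $(\vec p,\vec\nu)$ together with the precise decay of $a$ and $c$ at $t=-\infty$. The delicate points are establishing the a priori control that makes $\Lambda(t)$ integrable near $-\infty$ — in particular the bound $a(x_1^2+y_1^2)\to0$ on the quartic bump — and then running the Gronwall/uniqueness argument all the way back to $t=-\infty$.
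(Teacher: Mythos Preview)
Your opening strategy---substitute the ansatz, match coefficients, read off $a,c,\rho\to0$ and $b,d\to1$ from \eqref{eqn-ltv}---matches the paper, and so does your disposal of the $m=2$ data: the paper derives $(b-d)'=-2(b-d)(b+d)$ and $\rho'=-2\rho(b+d)$, notes $b+d\ge1$ near $-\infty$, and concludes $b\equiv d$, $\rho\equiv0$ by exactly the backward-growth argument you call ``strictly stable''.

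The gap is in your treatment of the centers. First, a small point: the limit \eqref{eqn-ltv} does \emph{not} force $x_1,y_1\to0$; the cubic coefficients give only $ax_1,ay_1\to0$, and the quadratic ones give $a(x_1^2+y_1^2)\to0$, but $x_1,y_1$ themselves may diverge. Second, and fatally for your scheme, the system for $(\vec p,\vec\nu)$ is not linear with integrable $\Lambda$. After $b=d$ one has $x_1'=-4b(x_1-x_2)$, and the paper computes that $X:=x_1-x_2$ satisfies $X'=-\tfrac{4X}{b}(b^2+4ac+4ab\phi)$ with $\phi:=X^2+Y^2$; working out $\dot\nu_1=(bx_2)'$ then produces a term $16b\phi\,p_1$. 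Since $\phi\sim|\vec p|^2/a^2$, this is cubic in $(\vec p,\vec\nu)$ divided by $a^2$, and in any case $\phi$ is not a priori bounded---indeed it may blow up like $e^{6|t|}$ while $\vec p\to0$. So the integrability of $\Lambda$ is precisely the conclusion you are after, and the Gronwall step is circular. This is exactly the obstruction you flag in your last paragraph, and it is real: the $m=1$ modes are genuinely marginal at the linear level, and no amount of linear bookkeeping will kill them.

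The paper sidesteps this by working with $\phi$ directly rather than with $(\vec p,\vec\nu)$: from $\phi'=-\tfrac{8\phi}{b}(b^2+4ac+4ab\phi)\le-6\phi$ one gets $\phi(t)\ge Ce^{6|t|}$ unless $\phi\equiv0$; combined with $a(t)\ge Ce^{5t}$ (from $a'=4ab$), this gives $a\phi^2\ge Ce^{7|t|}\to\infty$, contradicting $a\phi^2\to0$ (which \emph{does} follow from \eqref{eqn-ltv}, via the constant term). Once $\phi\equiv0$, i.e.\ $x_1=x_2$ and $y_1=y_2$, the equation $x_1'=-4b(x_1-x_2)=0$ makes the centers constant, hence zero by the limit. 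That nonlinear growth estimate on $\phi$ is the missing idea.
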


\begin{proof} The lemma follows from a direct calculation where you plug  
a solution $\bv(x,y,t)$ of the given form   into the equation \eqref{eqn-bvv}
and compute the relation between all coefficients $a,b,c,d,\rho$ and
$x_i,y_i$. 

Indeed, by doing so we first find the following equations relating the coefficients
$a,b,c,d,\rho$: 
\begin{equation}\label{eqn-ode1} 
a' = 2a \, (b+d), \quad (b-d)' = -2 \, (b-d) \, (b+d), \quad \rho' = - 2 \rho \, (b+d).
\end{equation}
From \eqref{eqn-ltv} we have
$$\lim_{t \to -\infty} a(t)= \lim_{t \to -\infty} c(t)=\lim_{t \to -\infty} \rho(t)=0, \quad \lim_{t \to -\infty} b(t)= \lim_{t \to -\infty} d(t)=1$$
which,  in particular,  imply that $1 \leq b+d \leq 3$,  if $t < t_0$  with $t_0$ sufficiently close to $-\infty$.
Hence, the last two equations  in \eqref{eqn-ode1} readily  imply that $b\equiv d$ and $\rho \equiv 0$. 
Hence, $\bv$ is now of the simpler form
$$
\bv(x,y,t)= a \,  ( (x-x_1)^2+(y-y_1)^2)^2 + b\,  ( (x-x_2)^2+ (y-y_2)^2)  + c 
$$
where all $a,b,c$ and $x_i,y_i$ are all functions of $t$. Observe that since $\bv(x,y,t) >0$ on $\R^2 \times (-\infty,0)$ and $\lim_{t \to -\infty} b(t)=1$,
all coefficients $a, b, c$ are positive and  $3/4 \leq b(t) \leq 5/4$, for $t \leq  t_0 <0$. 
By \eqref{eqn-ode1}, we now have
$$
a' = 4\, a b \leq \, 5 \, a,  \qquad t \leq  t_0 <0
$$
which readily gives  the bound
\begin{equation}\label{eqn-phi13}
a(t) \geq  C_1 \, e^{5t}\end{equation}
for a constant $C_1 >0$. 
Now, plugging $\bv$ back to the equation, we find  by direct calculation, that 
\begin{equation}\label{eqn-x1y1} 
x_1' = - 4 b \, (x_1-x_2)\qquad y_1' = - 4 b \, (y_1-y_2)
\end{equation} 
and that 
$X(t):=x_1(t)-x_2(t)$ and $Y(t):=y_1(t)-y_2(t)$ both satisfy the same equation
$$X' = - \frac {4X}{b}   \, ( b^2 + 4ac + 4ab\,  (X^2+ Y^2)) $$
and the same for $Y$. It follows that $\phi(t):= X^2 + Y^2 >0$ satisfies the equation
\begin{equation}\label{eqn-phi10}
\phi ' = - \frac {8\phi}{b}   \, ( b^2 + 4ac + 4ab\,  \phi)
\end{equation}
where  $b^2 + 4ac + 4ab\,  \phi \geq b^2 > 0$ for $t < t_0$. Since $\lim_{t \to -\infty} b(t) =1$,
we have $3/4 \leq b(t) \leq 5/4$, for $t < t_0 <0$. It follows from \eqref{eqn-phi10} that
$$
\phi ' =   - 8 \, \phi  b \leq -6 \, \phi  b, \qquad  t \leq t_0 < 0
$$
which implies the bound 
\begin{equation}\label{eqn-phi11}
\phi(t) \geq C_2\, e^{6|t|}, \qquad  t \leq t_0 < 0
\end{equation}
for a constant $C_2 >0$, unless $\phi \equiv 0$. 

We will next show that $\phi \equiv 0$. Observe first that from \eqref{eqn-ltv} and the fact
that $\lim_{t \to -\infty} b(t)=1$, we have 
$$\lim_{t \to -\infty} a(t)\, (x_1^2(t)+ y_1^2(t))^2 = 0, \qquad  \lim_{t \to -\infty}  (x_2^2(t)+ y_2^2(t))=0
$$
which yields 
\begin{equation}\label{eqn-phi12}
\lim_{t \to -\infty} a(t) \, \phi^2(t) =0.
\end{equation}
On the other hand, it follows from \eqref{eqn-phi11} and \eqref{eqn-phi13} that 
$$ a(t) \, \phi^2(t) \geq C\, e^{5t + 12|t|}  = C\, e^{7|t|}$$
which contradicts  \eqref{eqn-phi12}. Hence, $\phi \equiv 0$.
Once we know that $\phi \equiv 0$,   \eqref{eqn-x1y1} and \eqref{eqn-ltv}  yield $x_1(t) = x_2(t) = 0$ and $y_1(t) = y_2(t) = 0$ for all $t$. 

We conclude from the   above discussion  that the solution $\tv$ is of the form \eqref{eqn-form1}.
\end{proof}

We will now conclude that our solution is one of the King-Rosenau solution in plane coordinates.
Such solutions  were first discovered  by King \cite{K1}. 

\begin{lem} Let $\bv(x,y,t)$ be an ancient solution of the equation
\eqref{eqn-bvv} of the form \eqref{eqn-form1}. Then, up to a dilation constant, which makes $a(t)= c(t)$ for all $t$, we have
\begin{equation}\label{eqn-cab}
a(t) = - \frac{\mu}2  \, \csch ( 4 \mu t) \quad \mbox{and} \quad  b(t) = - 
\mu   \, \coth (4 \mu t).
\end{equation}
\end{lem}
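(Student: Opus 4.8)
The plan is to substitute the trial form $\bv(x,y,t)=a(t)\,(x^2+y^2)^2+b(t)\,(x^2+y^2)+c(t)$ into the plane equation \eqref{eqn-bvv} and read off the system of ODEs satisfied by $a,b,c$, then integrate it explicitly. First I would compute, using $r^2:=x^2+y^2$ and the radial form of the Laplacian $\Delta f = f_{rr}+r^{-1}f_r$, that $\Delta\bv = 16\,a\,r^2 + 4\,b$ and $|\nabla\bv|^2 = (2\bv_r)^2\cdot\tfrac14\cdot\ldots$ — more precisely $|\nabla\bv|^2 = \bv_r^2 = (4a r^3+2br)^2$. Plugging these into $\bv_t = \bv\,\Delta\bv - |\nabla\bv|^2$ and collecting powers of $r^2$ (the result is again a polynomial of degree $4$ in $r^2$, i.e. degree $8$ in $r$, but the degree-$6$ and degree-$8$ terms must cancel, which is a consistency check) yields the system
\begin{equation*}
a' = 6ab,\qquad b' = 4b^2 + 8ac,\qquad c' = 4bc.
\end{equation*}
(The exact numerical coefficients will follow from the computation; the structure is what matters.) I expect the dilation normalization $a(t)=c(t)$ to be consistent with this system: from $a'=6ab$ and $c'=4bc$ one does \emph{not} immediately get $a=c$, so I would instead use the scaling freedom $\bv(x,y,t)\mapsto k^{-2}\bv(kx,ky,t)$, which sends $(a,b,c)\mapsto(k^2 a, b, k^{-2}c)$, to arrange $a=c$ at one time; then the ODE system must force $a\equiv c$ for all $t$ (this is where the precise coefficients matter — one needs $a'/a = c'/c$, i.e. the two equations for $a$ and $c$ to have the same logarithmic derivative, which pins down the coefficients).

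Once $a=c$, set $b=a\,\beta$ or better work directly with $a$ and $b$. The reduced system becomes $a' = 6ab$ and $b' = 4b^2 + 8a^2$. I would look for the conserved-type combination: compute $(b/a)'$ or $(b^2-\text{const}\cdot a^2)'$. Concretely, from $a'=6ab$ we get $(\log a)' = 6b$; differentiating, $(\log a)'' = 6b' = 24b^2+48a^2$. Writing $h=\log a$, $h'=6b$ so $b=h'/6$, and $h'' = 24(h'/6)^2 + 48 e^{2h} = \tfrac23 (h')^2 + 48 e^{2h}$. This is a second-order autonomous ODE; multiplying by an integrating factor ($e^{-4h/3}$ to kill the $(h')^2$ term) reduces it to a first-order ODE, which integrates to give $h'$ (hence $b$) as an explicit function of $h$ (hence of $a$), and then one more quadrature gives $a(t)$ in terms of $t$. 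The expected outcome is that $a(t)$ and $b(t)$ are rational functions of $\coth(2\mu t)$ and $\csch(2\mu t)$ — the $\csch/\coth$ pair appears naturally because $\coth^2-\csch^2=1$ is exactly the kind of first integral the reduction produces.

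Finally I would fix the integration constants using the asymptotics from the previous lemma, namely $\lim_{t\to-\infty} a(t)=0$, $\lim_{t\to-\infty} c(t)=0$, $\lim_{t\to-\infty} b(t)=1$ (after the normalization $\mu=1$ there; in general $\lim b(t)=\mu$), together with the requirement that the solution be ancient and become extinct at $T=0$ (forcing the singularity of $\coth(4\mu t)$, $\csch(4\mu t)$ to sit at $t=0$). This yields
\begin{equation*}
a(t) = -\frac{\mu}{2}\,\csch(4\mu t),\qquad b(t) = -\mu\,\coth(4\mu t),
\end{equation*}
which is \eqref{eqn-cab}, and one checks that rewriting $\bv_S(x,y,t)=a(r^2)^2 + b r^2 + a$ back on the sphere via stereographic projection ($r^2 = (1+\sin\psi)/(1-\sin\psi)$ up to normalization, so $r^2/(1+r^2)^2 \sim \cos^2\psi$) reproduces exactly the King–Rosenau pressure \eqref{eqn-King-Rosenau}, $v_K = a(t) - b(t)\sin^2\psi$ after matching constants.

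\emph{Main obstacle.} The delicate point is not any single quadrature but the bookkeeping that forces $a\equiv c$ and the correct extraction of the first integral of $h'' = \tfrac23(h')^2 + 48 e^{2h}$: one must choose the integrating factor and constants so that the surviving solution is globally defined on $(-\infty,0)$, positive, and has the prescribed limit at $-\infty$ — a priori the general solution of that ODE has a two-parameter family of behaviors (including ones blowing up at a finite negative time or never approaching a constant), and ruling those out uses exactly the ancient-solution constraints established earlier. I expect the verification that the numerical coefficients in the ODE system are the ones that make $a/c$ constant to be the most error-prone piece of routine computation.
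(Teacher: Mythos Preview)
Your approach is the same as the paper's --- substitute the radial polynomial into \eqref{eqn-bvv}, read off the ODEs, and integrate --- but your computed system is wrong, and this error is precisely what generates the complications you flag as the ``main obstacle.''

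With $\bv = a r^4 + b r^2 + c$ one has $\Delta\bv = 16ar^2 + 4b$ and $|\nabla\bv|^2 = \bv_r^2 = 16a^2r^6 + 16abr^4 + 4b^2r^2$; expanding $\bv\,\Delta\bv - |\nabla\bv|^2$ and matching powers of $r^2$ gives
\[
a' = 4ab,\qquad b' = 16ac,\qquad c' = 4bc,
\]
not $a'=6ab$ and $b'=4b^2+8ac$. With the correct coefficients, $(\log a)' = 4b = (\log c)'$ holds identically, so $c = \lambda^2 a$ for a constant $\lambda>0$; the dilation $\bv\mapsto \lambda^{-2}\bv(\lambda x,\lambda y,t)$ sets $\lambda=1$. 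The reduced system $a'=4ab$, $b'=16a^2$ then has the first integral $(b^2-4a^2)' = 0$, and using $b\to\mu$, $a\to 0$ as $t\to-\infty$ gives $b^2-4a^2=\mu^2$; one more quadrature yields \eqref{eqn-cab}. There is no second-order ODE in $h=\log a$ to wrestle with, and the worry that $a'/a \neq c'/c$ simply does not arise.
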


\begin{proof} If we plug a solution of the  form \eqref{eqn-form1} into the 
equation \eqref{eqn-bvv} we find  that the coefficients $a,b,c$ must satisfy the equations 
\begin{equation}\label{eqn-ode2} 
a' = 4 \, b\, a, \qquad c' = 4 \, b \, c, \qquad b'=16\, a\, c.
\end{equation}
Since  $a(t)>0$ and $c(t) >0$ the first two equations imply that 
$$(\log a(t))' = (\log c(t))'$$
which shows that 
$$c(t) = \lambda^2 \, a(t)$$
for a constant $\la >0$. By performing a dilation $\bv_\la(x,y,t) = \la^{-2} \, \bv (\la x,\la y,t)$ (which leaves $b(t)$
unchanged) we may assume that $\la =1$, i.e. $a \equiv c$. 
The functions $a, b$ satisfy the system
\begin{equation}\label{eqn-ode3} 
a' = 4 \, b\, a\qquad \mbox{and}  \qquad b'=16\, a^2.
\end{equation}
Solving this  system gives us \eqref{eqn-cab} for a given constant $\mu >0$ (if we assume that $\lim_{t \to -\infty} b(t)=1$,
then $\mu=1$). 

\end{proof}

We will now conclude the proof of Theorem \ref{prop-King}.

\begin{proof}[Proof of Theorem \ref{prop-King}] We observe  that if $\bv(r,t) = a(t) \, r^4 + b(t) \, r^2 + a(t)$
is the King-Rosenau solution in polar coordinates, then in cylindrical coordinates it takes the form 
$$\hat v (s,t)= 2 a(t) \cosh^2 s + b(t).$$ 
Recalling that $a(t)$ and $b(t)$ are given by \eqref{eqn-cab} and using  \eqref{eqn-uU}, we conclude,  by direct calculation, that 
$$v(\psi,t) =  - \mu  \coth (2\mu t)   + \mu \,   \tanh (2\mu t) \, \sin^2 \psi$$
finishing the proof of the  theorem. 
\end{proof}

\section{The Contracting Spheres}
\label{section-sphere}

Throughout this section we will assume that the backward lmit
\begin{equation}\label{eqn-zero}
v_\infty := \lim_{t \to - \infty} v(\cdot,t) \equiv 0.
\end{equation}
Our goal is to show that in this case the ancient solution $v$ must be a  family of contracting spheres, 
as stated in the following theorem.

\begin{thm}
\label{prop-sphere}
If  the backward limit $v_\infty\equiv 0$, then $$v(\cdot,t) = \frac{1}{(-2t)}$$
that is, our ancient solution is a family of contracting spheres.
\end{thm}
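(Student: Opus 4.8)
\textbf{Proof plan for Theorem \ref{prop-sphere}.}

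The plan is to show that under the hypothesis $v_\infty\equiv 0$, the pressure function $v$ is automatically spatially constant for every $t$, so that \eqref{eqn-p1} collapses to the ODE $v_t=2v^2$, whose only solution blowing up at $t=0$ is $v=1/(-2t)$. The key monotonicity that drives everything is the Lyapunov functional $J(t)$ from \eqref{eqn-J}: by Lemma \ref{lem-lyap-bound} we have $-C\le J(t)\le 0$ uniformly, and when $v_\infty\equiv 0$ the $C^{1,\alpha}$ convergence $v(\cdot,t)\to 0$ forces $J(t)\to 0$ as $t\to-\infty$. Combined with the monotonicity \eqref{eqn-lyap2}, which shows $J$ is nondecreasing in $t$, this pins down the behavior of $J$: since $J$ is nondecreasing and tends to $0$ at $-\infty$ while staying $\le 0$, it cannot actually be $\le 0$ and nondecreasing with limit $0$ unless... here one must be careful about signs. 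In fact $J$ nondecreasing with $\lim_{t\to-\infty}J(t)=0$ and $J\le 0$ forces $J\equiv 0$, hence $\frac{d}{dt}J\equiv 0$, and then \eqref{eqn-lyap2} forces $v_t\equiv 0$ pointwise — but that contradicts $R=v_t/v>0$. So the correct route is different: one should instead integrate \eqref{eqn-lyap2} in time and use that the time-integral of $\ins v_t^2/v^2\,da$ is finite, then extract further structure.

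More precisely, here is the route I would actually carry out. First, integrating \eqref{eqn-lyap2} from $-\infty$ to $t_0$ and using $-C\le J\le 0$ together with $J(-\infty)=0$ gives $\int_{-\infty}^{t_0}\ins \frac{v_t^2}{v^2}\,da\,dt<\infty$ and $\int_{-\infty}^{t_0}\ins\frac{|\ds v|^2}{v^2}v_t\,da\,dt<\infty$. The first bound says $\int_{-\infty}^{t_0}\ins R^2\,da\,dt<\infty$, so along a sequence $t_k\to-\infty$ we get $\ins R^2(\cdot,t_k)\,da\to 0$; but $R_t\ge 0$ (Harnack, as recalled in Section \ref{sec-apriori}) means $\ins R^2(\cdot,t)\,da$ is nondecreasing, so in fact $\ins R^2(\cdot,t)\,da\to 0$ as $t\to-\infty$ without passing to a subsequence. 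Next I would use the parametrization: since $v_\infty\equiv 0$, the convergence result of Section \ref{sec-limit} (the case $\mu=0$ of Theorem \ref{prop-limit}) applies, and one knows $v(\cdot,t)\to 0$ in $C^{1,\alpha}(S^2)$. To upgrade to the contracting-sphere conclusion I would study the ratio $v(\cdot,t)/\bar v(t)$ where $\bar v(t):=\fint_{S^2}v(\cdot,t)\,da$, or equivalently work with $R$ directly: the curvature $R$ satisfies $R_t=\dds_g R+R^2$ and is ancient with $R>0$, and one wants to show $R(\cdot,t)$ is spatially constant. The cleanest implementation uses the entropy/Harnack rigidity: the trace Harnack quantity $Z:=R_t-|\nabla R|_g^2/R\ge 0$; if $Z$ vanishes somewhere at an interior time it vanishes identically (strong maximum principle applied to the evolution equation for $Z$), forcing the solution to be an expanding soliton — and on a compact surface the only such is the round shrinking sphere, i.e. a contracting sphere. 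So the real task is to show $Z\equiv 0$, and that is where the finiteness $\int_{-\infty}^{t_0}\ins\frac{|\ds v|^2}{v^2}v_t\,da\,dt<\infty$ enters: rewriting this integrand in terms of $R$ and $|\ds v|^2/v$, and using the uniform bounds $|\ds v|^2/v\le C$, $R\ge 0$ from Section \ref{sec-apriori}, one extracts that the "defect" term $\int_{-\infty}^{t_0}\ins Z\cdot(\text{positive weight})\,da\,dt<\infty$, which combined with the monotonicity of the relevant quantity in $t$ forces $Z\to 0$, hence $Z\equiv 0$ by the strong maximum principle.

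Finally, once $Z\equiv 0$ one concludes $g(\cdot,t)$ is a shrinking gradient Ricci soliton on a compact surface with $R>0$; by the classification of two-dimensional shrinking solitons (Hamilton) it must be the round sphere, and matching the extinction time $T=0$ forces exactly $v(\cdot,t)=1/(-2t)$, which is \eqref{eqn-spheres}. The step I expect to be the main obstacle is the passage from the integral smallness of the Lyapunov dissipation to the pointwise identity $Z\equiv 0$: one has time-integrated smallness of a nonnegative quantity but needs a pointwise conclusion, so one must find the right monotone-in-$t$ companion quantity (the natural candidate being $\ins R^2\,da$ or a weighted version thereof, which is nondecreasing by the Harnack inequality) so that integrability plus monotonicity upgrades to a limit, and then one must verify the strong-maximum-principle hypotheses for the (degenerate, since $\Delta_g=v\Delta_{S^2}$ with $v\to 0$) evolution equation — the degeneracy near $t=-\infty$ is precisely the subtlety the a priori estimates of Section \ref{sec-apriori} were designed to handle, so I would invoke Lemma \ref{prop-plane} and those uniform-in-time estimates to rule out the degenerate behavior and close the argument.
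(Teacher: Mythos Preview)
Your proposed route has a genuine gap at the Harnack rigidity step. On the contracting sphere one has $R(\cdot,t)=-1/t$, so $R_t=R^2$ and $\nabla R=0$, giving $Z:=R_t-|\nabla R|_g^2/R=R^2>0$; the shrinking sphere is \emph{not} the equality case of the ancient Harnack. The equality $Z\equiv 0$ characterizes steady gradient solitons, and on a compact surface with $R>0$ there are none (integrating the steady soliton equation against the volume form forces $\int R\,dA=0$, contradicting Gauss--Bonnet). So proving $Z\equiv 0$ would produce a contradiction, not the classification you want. More broadly, the integral bounds you extract from the Lyapunov dissipation --- $\int_{-\infty}^{t_0}\ins R^2\,da\,dt<\infty$ and $\ins R^2(\cdot,t)\,da\to 0$ --- hold for \emph{every} ancient solution (this is exactly Lemma \ref{lem-R}), in particular for the King--Rosenau solutions; they cannot by themselves single out the sphere. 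There is also a sign issue in your first paragraph: $J$ is non-\emph{increasing} by \eqref{eqn-lyap2}, so $J(-\infty)=0$ together with $J\le 0$ gives no constraint, and in any case $J(-\infty)=0$ is not obvious when $v_\infty\equiv 0$, since $|\ds v|^2/v$ is a $0/0$ indeterminate form and is only known to be bounded (Lemma \ref{lem-first-sec-der}), not to tend to zero.

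The paper's proof is entirely different and does not use the Lyapunov functional at all. It is an isoperimetric argument: one considers Hamilton's isoperimetric ratio $I(t)$ from \eqref{eq-isop} and shows that if $I(t_0)<1$ for some $t_0$, then $I(t)\le C/|t|$ (Lemma \ref{lem-isop}), so the isoperimetric curves $\gamma_t$ have $g(t)$-length bounded uniformly in $t$. Pointed limits along $t_k\to-\infty$ based at $p_k\in\gamma_{t_k}$ converge to a flat cylinder, with $\gamma_{t_k}$ converging to a cross-circle. One then constructs from this foliation a curve $\beta_k$ with $L_{g(t_k)}(\beta_k)\le C$ and simultaneously $L_{S^2}(\beta_k)\ge\delta>0$ (by choosing $\beta_k$ to bisect the $S^2$-area and using volume comparison). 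But $v_\infty\equiv 0$ means $u(\cdot,t_k)\to+\infty$ uniformly, which forces $L_{g(t_k)}(\beta_k)=\int_{\beta_k}\sqrt{u(t_k)}\,d_{S^2}\to\infty$ --- a contradiction. Hence $I(t)\equiv 1$, and the solution is the round sphere.
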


To prove the theorem we will use an isoperimetric estimate for the Ricci
flow which was proven by R. Hamilton in \cite{Ha}.  Let $M$ be any compact surface. Any simple closed curve $\gamma$ 
on  $M$  of  length $L(\gamma)$ 
divides the  compact  surface $M$  into two regions   with areas $A_1(\gamma)$ 
and  $A_2(\gamma)$. We define the isoperimetric ratio 
as in \cite{HT}, namely
\begin{equation}
\label{eq-isop}
I = \frac 1{4\pi} \, \inf_{\gamma} L^2(\gamma) \, \left ( \frac 1{A_1(\gamma)} + \frac 1{A_2(\gamma)} \right ).
\end{equation}
It is well known that  $I \leq 1$ always,  and that $I\equiv 1$ if and only if the surface $M$ is a sphere. 

We will briefly outline the proof of  Theorem \ref{prop-sphere} whose steps will be proven  in detail  afterwards.
We consider  our evolving surfaces at each time $t < 0$, and define the isoperimetric ratio $I(t)$ as above. Our goal is to show that  our assumption \eqref{eqn-zero} implies that   $I(t) \equiv 1$, which forces  $(M,g(t))$  to be a family of contracting  spheres. We will argue by contradiction and assume that  $I(t_0) < 1$, for some $t_0 <0$. 
In that case  we will  show that there exists a sequence $t_k \to - \infty$ and   closed curves $\beta_k$ on $S^2$ so that  simultaneously      we have 
\begin{equation}
\label{eq-length}
L_{S^2}(\beta_k) \ge \delta > 0 \quad  \mbox{and} \quad L_{g(t_k)}(\beta_k) \le C\qquad 
\forall k
\end{equation}
where  $L_{S^2}$ and $ L_{g(t_k)} $ denote   the length of a curve in the round metric on $S^2$ and  in the metric $g(t_k)$, respectively. This clearly contradicts  the fact that  $u(\cdot,t_k)  \to \infty$, uniformly in $S^2$ ( implied  by \eqref{eqn-zero}) and finishes the proof. 

We will now outline how we will find the curves $\beta_k$. For each $t < t_0$, let  $\gamma_t$ be a  curve for which the isoperimetric ratio $I(t)$ is achieved. 
\begin{enumerate}[i.]
\item
If $I(t_0) < 1$, for some $t_0 <0$, we will show that   $I(t) \le \frac{C}{|t|}$,  for $t < t_0$. We 
will use that to show $L_{g(t)}(\gamma_t) \le C$, for all $t < t_0$.  
\item
For any sequence $t_k \to -\infty$ and $p_k \in \gamma_{t_k}$,  we will show that there exists a subsequence  
such that  $(M,g(t_k),p_k)$   converges to $(M_{\infty},g_{\infty},p_{\infty})$, where $M_{\infty} = S^1\times \mathbb{R}$ and $\gamma_{\infty} := \lim_{k\to\infty}\gamma_{t_k}$ is a closed geodesic on $M_{\infty}$, one of the cross circles of $S^1\times \mathbb{R}$.
\item
Let $t_k$ be as above. If $A_1(t_k), A_2(t_k)$ are the areas of the two regions  into which $\gamma_{t_k}$ divides $S^2$, we show that both of them are comparable to $|t_k| = -t_k$.
\item
We show that the maximal distances from $\gamma_{t_k}$ to the points of  the two 
regions of areas $A_1(t_k), A_2(t_k)$ respectively  are both of length comparable to $|t_k|$.
\item
The curves $\gamma_{t_k}$ do not necessarily satisfy (\ref{eq-length}). However, we use
 them and (ii) to define  a foliation $\{\beta_{w}^k\}$ of our surfaces $(M,g(t_k))$ and we  choose the curve $\beta_k$ from this foliation that splits $S^2$ into two parts of equal areas with respect to the round metric. We prove that  this is the curve that satisfies (\ref{eq-length}) by  
 using  that $I_{S^2} = 1$, the Bishop Gromov volume comparison principle,   (iii) and  (iv).
\end{enumerate}

\begin{lem}
\label{lem-isop}
If $I(t_0) < 1$, for some $t_0 <0$,  then there exist positive constants $C_1, C_2$ so  that 
$$I(t) \le \frac{C_1}{|t|+C_2}\qquad \mbox{for all} \,\,\, t < t_0.$$
Moreover, if $\gamma_t$ is the curve at  which the infimum in (\ref{eq-isop}) is attained
then,  
$$L(t) := L(\gamma_t) \le C\qquad \mbox{for all}\,\,\, t < t_0.$$
\end{lem}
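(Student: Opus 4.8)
The proof rests on two facts: the exact rate at which the total area $A(t)$ is lost along the flow, and Hamilton's isoperimetric monotonicity \cite{Ha}. Under \eqref{eqn-ricci} the area form evolves by $\partial_t(dA)=-R\,dA$, so by Gauss--Bonnet $A'(t)=-\int_{S^2}R\,dA=-8\pi$, and since the solution becomes extinct at $T=0$ we get $A(t)=8\pi|t|$ for all $t<0$. Moreover, by \cite{Ha} the infimum in \eqref{eq-isop} is attained by a (smooth, simple closed, constant geodesic curvature) curve $\gamma_t$, so that, writing $L(t)=L(\gamma_t)$ and letting $A_1(t),A_2(t)$ be the areas of the two components of $S^2\setminus\gamma_t$ (hence $A_1(t)+A_2(t)=A(t)$),
\[
4\pi\, I(t)=L(t)^2\Big(\tfrac{1}{A_1(t)}+\tfrac{1}{A_2(t)}\Big)=L(t)^2\,\frac{A(t)}{A_1(t)A_2(t)}.
\]

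\emph{Decay of $I$.} Hamilton's estimate \cite{Ha} provides, beyond the bare monotonicity $I'(t)\ge 0$, the scale-invariant differential inequality $I'(t)\ge \frac{8\pi}{A(t)}\,I(t)\,(1-I(t))$, which (using $A'=-8\pi$) is equivalent to the assertion that $\frac{1}{A(t)}\big(\frac{1}{I(t)}-1\big)$ is non-increasing in $t$. Set $\kappa:=\frac{1}{A(t_0)}\big(\frac{1}{I(t_0)}-1\big)$; since $I(t_0)<1$ we have $\kappa>0$. For $t<t_0$, monotonicity gives $\frac{1}{A(t)}\big(\frac{1}{I(t)}-1\big)\ge\kappa$, i.e.
\[
\frac{1}{I(t)}\ge 1+\kappa\,A(t)=1+8\pi\kappa\,|t|,
\]
whence $I(t)\le\frac{1}{1+8\pi\kappa|t|}=\frac{C_1}{|t|+C_2}$ with $C_1=C_2=(8\pi\kappa)^{-1}=\frac{|t_0|\,I(t_0)}{1-I(t_0)}>0$.

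\emph{Length bound.} By AM--GM, $A_1(t)A_2(t)\le A(t)^2/4$, so the displayed identity for $4\pi I(t)$ gives $4\pi I(t)\ge L(t)^2\cdot\frac{4}{A(t)}$, i.e. $L(t)^2\le\pi\,I(t)\,A(t)$. Combining this with $A(t)=8\pi|t|$ and the decay estimate just proved,
\[
L(t)^2\le \pi\cdot\frac{C_1}{|t|+C_2}\cdot 8\pi|t|\le 8\pi^2 C_1\qquad\text{for all }t<t_0,
\]
which is the asserted uniform bound $L(t)\le C$.

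The only substantive point is the \emph{rate} extracted in the second step: plain monotonicity of $I$ yields merely $I(t)\le I(t_0)<1$, which is far too weak to force $I(t)\to 0$, so one genuinely needs the scale-invariant (differential-inequality) form of Hamilton's estimate, paired with the exact identity $A(t)=8\pi|t|$; the first and third steps are then elementary.
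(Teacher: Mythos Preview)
Your proof is correct and follows essentially the same approach as the paper: both invoke Hamilton's differential inequality from \cite{Ha}, combine it with $A(t)=8\pi|t|$ to get $I(t)\le C/|t|$, and then use $\tfrac{1}{A_1}+\tfrac{1}{A_2}\ge \tfrac{4}{A}$ to bound $L(t)$. The only cosmetic difference is that the paper quotes Hamilton's inequality in the form $I'\ge \tfrac{4\pi(A_1^2+A_2^2)}{A_1A_2(A_1+A_2)}\,I(1-I^2)$ and reduces it to $I'\ge |t|^{-1}I(1-I^2)$, whereas you use the (slightly weaker, but immediate) consequence $I'\ge \tfrac{8\pi}{A}\,I(1-I)$ and repackage it neatly as monotonicity of $\tfrac{1}{A}\big(\tfrac{1}{I}-1\big)$, which makes the integration step a one-liner.
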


\begin{proof}
Let $t < t_0$, with $t_0$ as in the statement of the lemma. It has been shown in  \cite{Ha} that  
$$I'(t) \geq \frac{4\pi\, (A_1^2+A_2^2)}{A_1A_2(A_1+A_2)} \, I\, (1-I^2).$$
Since $A_1+A_2= 8 \pi |t|$ and $A_1^2+A^2_2 \geq 2A_1A_2$, we conclude the
differential inequality
$$I'(t) \geq  \frac 1{|t|} \, I\, (1-I^2).$$
Since $I(t_0) < 1$, the above inequality implies the bound
$$I(t) \leq \frac {C_1}{|t|  + C_2}\qquad \mbox{for all} \,\, t < t_0$$
for  uniform in time constants $C_1$ and $C_2$.  Using that  
$ \frac 1{A_1} + \frac 1{A_2} \geq \frac  1{4\pi |t|}$, 
we will conclude that  that  the   length $L(t)$ of a  curve $\gamma_t$ at which the infimum in \eqref{eq-isop} is attained satisfies 
$L(t) \leq C$, for all $t < t_0$. 

\end{proof}

We also have the following estimate from below on the length   $L(t)$ of the curve at which the infimum in \eqref{eq-isop} is attained. 

\begin{lem}
\label{lem-lower-l}
There is a uniform constant $c > 0$, independent of time so that
$$L(t) \ge c\qquad  \mbox{for all} \,\,\, t\le t_0 < 0.$$
\end{lem}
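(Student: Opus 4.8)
The plan is to argue by contradiction. Suppose the conclusion fails; then there is a sequence $t_k\to-\infty$ with $L(t_k)\to 0$. (A sequence $t_k$ confined to a compact subinterval of $(-\infty,t_0]$ can be ruled out immediately: on a fixed smooth compact surface the infimum in \eqref{eq-isop} is attained by a smooth curve of positive length, and $L(t)$ depends continuously on $t$.)

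First I would record the structure of the extremal curve. Since $\gamma_t$ minimizes the scale invariant functional $\gamma\mapsto L(\gamma)^2\big(A_1(\gamma)^{-1}+A_2(\gamma)^{-1}\big)$, the first variation formula forces $\gamma_t$ to have \emph{constant} geodesic curvature $k_g(t)=\tfrac{L(t)}{2}\big(A_1(t)^{-1}-A_2(t)^{-1}\big)\ge 0$ (labelling so that $A_1\le A_2$), and Gauss--Bonnet on the disk $\Omega_1$ of area $A_1$ gives $\int_{\Omega_1}K\,da=2\pi-k_g(t)L(t)$. Because $K=\tfrac12 R\ge 0$, this yields $0\le k_g(t)L(t)\le 2\pi$, and combined with the identity $L(t)^2\big(A_1^{-1}+A_2^{-1}\big)=4\pi I(t)$ one extracts
\[
\frac{L(t)^2}{A_1(t)}=2\pi\big(1+I(t)\big)-\int_{\Omega_1}K\,da ,\qquad 0\le \int_{\Omega_1}K\,da\le 2\pi .
\]

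Next I would rescale. Set $\lambda_k:=L(t_k)^{-1}\to\infty$, pick $p_k\in\gamma_{t_k}$, and consider the metrics $g_k:=\lambda_k^2\,g(t_k)$, in which $\gamma_{t_k}$ has length $1$ and, by \eqref{eqn-bR}, the Gauss curvature is at most $\lambda_k^{-2}C\to 0$. The uniform bound $R(\cdot,t)\le C$ for $t\le t_0$ makes the curvature of $g(t_k)$ bounded by $r^{-2}$ on every ball of a fixed radius $r=r(C)$, so Perelman's no--local--collapsing theorem, applied to the flow on a window $[t_k-1,t_k]$, gives a uniform lower bound for $\injrad_{g_k}(p_k)$. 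Hence a subsequence of $(M,g_k,p_k)$ converges, in the pointed $C^\infty$ Cheeger--Gromov sense, to a complete flat surface $(M_\infty,g_\infty,p_\infty)$, with $\gamma_{t_k}\to\gamma_\infty$, a closed curve of length $1$ and constant geodesic curvature through $p_\infty$. Lifting to the universal cover, a constant geodesic curvature curve is a round circle or a line, so $\gamma_\infty$ is either a round circle of radius $\tfrac1{2\pi}$ bounding a flat disk of area $\tfrac1{4\pi}$, or a closed geodesic, in which case $M_\infty$ must be the flat cylinder $S^1_{1/(2\pi)}\times\R$. In the first case $\gamma_{t_k}$ curves (with positive geodesic curvature) into the region converging to the bounded disk, so $\lambda_k^2 A_1(t_k)\to\tfrac1{4\pi}$, while $\lambda_k^2 A_2(t_k)=\lambda_k^2\big(8\pi|t_k|-A_1(t_k)\big)\to\infty$; therefore $I(t_k)=\tfrac1{4\pi}\big((\lambda_k^2 A_1)^{-1}+(\lambda_k^2 A_2)^{-1}\big)\to 1$. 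But $I'(t)\ge|t|^{-1}I(1-I^2)\ge 0$, so $I$ is nondecreasing and $I(t_k)\le I(t_0)$, a contradiction when $I(t_0)<1$ (and when $I(t_0)=1$ the surface is a round sphere of area $8\pi|t|$, for which $L(t)$ is the length $2\pi\sqrt{2|t|}\to\infty$ of a great circle and the lemma is trivial). In the remaining (geodesic) case the displayed identity forces $\int_{\Omega_1^{t_k}}K\,da\to 2\pi$ and, symmetrically, $\int_{\Omega_2^{t_k}}K\,da\to 2\pi$, so all the curvature concentrates in the two caps and none escapes onto the thin neck around $\gamma_{t_k}$; extracting (again by no--local--collapsing, now without rescaling and with base points inside the two caps) Cheeger--Gromov limits of the ancient flows $g(\cdot+t_k)$ then produces complete ancient two--dimensional solutions carrying total curvature $2\pi$, which is incompatible with the classification of such limits (\cite{DS}, together with Theorem \ref{prop-limit}). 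This contradiction proves the lemma.

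The step I expect to be the main obstacle is the non--collapsing input: extracting a nondegenerate pointed Cheeger--Gromov limit from $(M,g_k,p_k)$, equivalently bounding $\injrad_{g_k}(p_k)$ from below. Here the uniform curvature bound $R(\cdot,t)\le C$ on $t\le t_0$ from \eqref{eqn-bR} is essential and must be fed into Perelman's no--local--collapsing theorem over a fixed time window; once the limit exists, the competition between the monotonicity of $I$ and its forced convergence to $1$ closes the argument quickly, and the analysis of the degenerate cylinder case is the only remaining (but routine, given \cite{DS} and Theorem \ref{prop-limit}) point to pin down.
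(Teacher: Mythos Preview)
Your overall strategy---argue by contradiction, rescale by $L(t_k)^{-2}$, extract a flat pointed limit, and contradict the known behavior of $I(t)$---is the same as the paper's. The divergence is in how you obtain the injectivity radius bound, and this leads you into an unnecessary (and incompletely handled) case.

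The paper does not use Perelman's no-local-collapsing. Since $(M,g(t))$ is an even-dimensional orientable manifold with $0<R(\cdot,t)\le C$ for all $t\le t_0$, Klingenberg's estimate gives directly $\injrad_{g(t)}\ge \pi/\sqrt{K_{\max}}\ge\delta>0$, uniformly in $t\le t_0$. After rescaling, $\injrad_{g_k}(p_k)=\lambda_k\cdot\injrad_{g(t_k)}(p_k)\ge\lambda_k\delta\to\infty$, so the pointed limit has \emph{infinite} injectivity radius and must be $\R^2$; the cylinder alternative never arises. The closed limit curve of length $1$ then bounds a region of some area $\alpha>0$, while the complementary area tends to $\infty$, so $I(t_k)\to\tfrac1{4\pi\alpha}>0$, contradicting $I(t)\le C_1/(|t|+C_2)\to 0$ from Lemma~\ref{lem-isop}. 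That is the entire argument.

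Your use of no-local-collapsing only gives a uniform lower bound on $\injrad_{g_k}(p_k)$, not one tending to infinity, which is why you are forced to confront a possible flat-cylinder limit. Your treatment of that case is the genuine gap: the assertion that unrescaled pointed limits with ``base points inside the two caps'' produce complete ancient solutions carrying total curvature exactly $2\pi$ is not justified---you have not located the base points, controlled curvature escape to infinity, or explained why the integral $\int_{\Omega_j}K\,da\to 2\pi$ survives in the limit. The appeal to Theorem~\ref{prop-limit} is also misplaced, since that theorem describes the backward limit of the \emph{given} ancient solution, not a classification of arbitrary pointed limits. All of this is avoided by replacing Perelman with Klingenberg.
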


\begin{proof}
Recall that for $t_0 < 0$, the scalar curvature $R$ satisfies  $0 < R(\cdot,t) \le C$,  for all $t \le t_0$.
The Klingenberg injectivity radius estimate for even dimensional manifolds implies the bound
\begin{equation}
\label{eq-injrad}
\injrad(g(t)) \ge \frac{c}{\sqrt{R_{\max}}}\ge \delta > 0\qquad  \mbox{for all} \,\,\, t \le t_0 < 0
\end{equation}
for a uniform in time constant $c >0$. 
We will prove the Lemma by contradiction. Assume that there is a sequence $t_i\to -\infty$,  so that $L_i := L(t_i) \to 0$,  as $i\to\infty$,  and denote by $\gamma_{t_i}$ a  curve
at which the isoperimetric  ratio is attained, i.e. $L(t_i)=L(\gamma_{t_i})$. 

Define a new sequence of  re-scaled Ricci flows,  $g_i(t) := L_i^{-2}\,  g(t_i+ L_i^2\, t)$ and take a sequence of points $p_i\in \gamma_{t_i}$.  The bound   (\ref{eq-injrad})  implies a lower bound on the injectivity radius at $p_i$ with respect to metric $g_i$, namely 
\begin{equation}
\label{eq-in1}
\injrad_{g_i}(p_i) = \frac{\injrad_{g(t_i)}(p_i)}{L_i^2} \ge \frac{\delta}{L_i^2} \to \infty\qquad  \mbox{as} \,\,\, i\to\infty.
\end{equation}
Also, since $R_i(\cdot,t) = L_i^2 \, R(\cdot, t_i+L_i^2\, t) \leq C\, L_i^2$ and $L_i \to 0$, we get  
\begin{equation}
\label{eq-R2}
\max R_i(\cdot,t)   \to 0\qquad  \mbox{as} \,\,\, i\to\infty.
\end{equation}
Hamilton's  compactness theorem (c.f. in \cite{H2}) implies, passing to a subsequence,  the 
  pointed smooth convergence of $(M,g_i(0),p_i))$ to a complete manifold $(M_{\infty},g_{\infty},p_{\infty})$,  which is,  due to (\ref{eq-in1}) and (\ref{eq-R2}),  a standard plane. 
Moreover, 
$$I(t_i) = \frac{1}{4\pi}L_i^2\, \left (\frac{1}{A_1(t_i)} + \frac{1}{A_2(t_i)} \right ) = \frac{1}{4\pi}\left(\frac{1}{A_1(g_i(0))} + \frac{1}{A_2(g_i(0))}\right)$$
where $A_1(g_i(0))$ and $A_2(g_i(0))$ are the areas inside  and outside the curve $\gamma_{t_i}$, respectively, both computed with respect to metric $g_i(0)$. Since $g_i(0)$ converges to the euclidean metric and $\gamma_{t_i}$ converges to a curve of length $1$, it follows that 
$\lim_{i\to\infty}A_1(g_i(0)) = \alpha > 0$ and  $\lim_{i\to\infty}A_2(g_i(0)) = \infty$,   which implies that 
$$\lim_{i\to\infty} I(t_i) \ge \delta > 0$$
and  obviously contradicts Lemma \ref{lem-isop}.
\end{proof}

We recall that  each time $t$,  a curve $\gamma_t$   at which the
isoperimetric ratio  is achieved  splits the surface into two regions of areas  $A_1(t)$ and $A_2(t).$ 
Lemma \ref{lem-lower-l} yields to  the following conclusion. 

\begin{cor}
\label{claim-area-t}
There are  uniform constants $c > 0$ and $C > 0$ so that 
$$c\, |t| \leq A_1(t) \le C\, |t| \quad \mbox{and}  \quad c\, |t| \leq A_2(t) \le C\, |t|$$
for all $t < t_0 <0$. 
\end{cor}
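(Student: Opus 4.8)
The plan is to deduce both chains of inequalities directly from the two preceding lemmas together with the linear decay of the total area. First I would record that, integrating equation \eqref{eqn-U} over $S^2$ (equivalently, by the Gauss--Bonnet theorem), the total area of $(S^2,g(t))$ equals $8\pi|t|$, so that $A_1(t)+A_2(t)=8\pi|t|$ for all $t<0$; this was already used in the proof of Lemma \ref{lem-isop}. Since $A_1(t),A_2(t)>0$, the upper bounds $A_1(t)\le 8\pi|t|$ and $A_2(t)\le 8\pi|t|$ are immediate, and one may take $C=8\pi$.

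For the lower bounds, the key point is to combine the definition
$$I(t)=\frac{1}{4\pi}\,L^2(t)\left(\frac{1}{A_1(t)}+\frac{1}{A_2(t)}\right)$$
with the bound $I(t)\le C_1/(|t|+C_2)$ from Lemma \ref{lem-isop} and the uniform lower bound $L(t)\ge c>0$ from Lemma \ref{lem-lower-l}. These give
$$\frac{1}{A_1(t)}+\frac{1}{A_2(t)}=\frac{4\pi\,I(t)}{L^2(t)}\le \frac{4\pi}{c^2}\cdot\frac{C_1}{|t|+C_2}\le \frac{C'}{|t|}$$
for a uniform constant $C'$ and all $t<t_0$. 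Since each summand on the left is positive, this yields $1/A_i(t)\le C'/|t|$, hence $A_i(t)\ge |t|/C'$ for $i=1,2$, which is the asserted lower bound with $c=1/C'$.

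I do not expect a genuine obstacle here: the corollary is essentially a bookkeeping consequence of Lemmas \ref{lem-isop} and \ref{lem-lower-l}. The only point requiring care is to check that all constants are genuinely independent of $t$, which holds because $C_1,C_2$ (coming from integrating the isoperimetric differential inequality past $t_0$) and $c$ (coming from the Klingenberg injectivity radius estimate together with the uniform curvature bound \eqref{eqn-bR}) are all uniform on $(-\infty,t_0]$. The substantive work was the lower bound $L(t)\ge c$ in Lemma \ref{lem-lower-l}, which prevents the optimal isoperimetric curve $\gamma_t$ from collapsing in the round metric.
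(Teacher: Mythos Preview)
Your proof is correct and follows essentially the same route as the paper: the upper bound comes from $A_1(t)+A_2(t)=8\pi|t|$, and the lower bound from combining $L(t)\ge c$ (Lemma \ref{lem-lower-l}) with $I(t)\le C_1/(|t|+C_2)$ (Lemma \ref{lem-isop}) in the defining relation for $I(t)$ to bound each $1/A_j(t)$ from above. The paper writes the chain slightly more tersely as $c/A_j(t)\le L^2(t)/A_j(t)\le I(t)\le C/|t|$, but the content is identical.
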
 

\begin{proof}
It is well known that the total area  of our evolving surface is $A(t)= 8\pi\, |t|$.
Hence,  $A_1(t) \le 8\pi\, |t|$ and $A_2(t) \le 8\pi\, |t|$. On the other hand, by
Lemmas  \ref{lem-isop} and \ref{lem-lower-l}, we have 
$$
\frac{c}{A_j(t)} \leq   \frac{L^2(t)}{A_j(t)} \leq  I(t) \leq \frac C{|t|}\qquad j=1,2
$$
for all $t < t_0$, which shows  that 
$A_j(t) \ge c\, |t|$, $j=1,2$, 
for a uniform constant $c > 0$, therefore proving the corollary.
\end{proof}

We will fix in the sequel a sequence  $t_k \to -\infty$. Let $\gamma_{t_k}$ be,  as
before,  a curve at which the isoperimetric ratio is achieved. 
From now on we will refer to  $\gamma_{t_k}$ as an isoperimetric
curve at time $t_k$. To simplify the notation,
we will set $A_{1k} :=
A_1(t_k)$, $A_{2k} := A_2(t_k)$ and $L_k = L(t_k)$. It follows from  Corollary \ref{claim-area-t} that
\begin{equation}
\label{eq-inf-area} 
\lim_{k\to\infty} A_{1k} = +\infty \qquad  \mbox{and} \qquad   \lim_{k\to\infty} A_{2k} = +\infty.
\end{equation}

Pick a sequence of points $p_k \in \gamma_{t_k}$ and
look at the pointed sequence of solutions $(M,g(t_k+t),p_k)$.  Since
the curvature is uniformly bounded and since the injectivity radius at
$p_k$ is uniformly bounded from below, by Hamilton's compactness
theorem we can find a subsequence of pointed solutions that converge,
in the Cheeger-Gromov sense,  to a complete  smooth solution
$(M_{\infty},g_{\infty},p_{\infty})$. This means that for every compact set $K\subset M_{\infty}$ there are compact sets $K_k\subset M$ and diffeomorphisms $\phi_k:K\to K_k$ so that $\phi_k^*g(t_k)$ converges to $g_{\infty}$. From Lemma
\ref{lem-isop},   $L(t_k) \le C$, for all $k$, and therefore our
curves $\gamma_{t_k}$ converge to a curve $\gamma_{\infty}$ (this
convergence is induced by the  manifold convergence) which by
(\ref{eq-inf-area}) has the property that it splits $M_{\infty}$ into
two parts (call them $M_{1\infty}$ and $M_{2\infty}$), each of which
has  infinite area. It follows that we can choose points $x_j \in
M_{1\infty}$ and $y_j\in M_{2\infty}$ so that
$\dist_{g_{\infty}}(x_j,p_{\infty}) =
\dist_{g_{\infty}}(p_{\infty},y_j) = \rho_j$, where $\rho_j$ is an
arbitrary sequence so that $\rho_j \to \infty$. Since $(M_{\infty},
g_{\infty})$ is complete, there exists a minimal geodesic $\beta_j$
from $x_j$ to $y_j$. This geodesic $\beta_j$ intersects
$\gamma_{\infty}$ at some point $q_j$. Since $q_j\in \gamma_{\infty}$
and $\gamma_{\infty}$ is a closed curve of finite length, the set $\{q_j\}$
is compact and therefore there is a subsequence so that $q_j \to
q_\infty \in \gamma_{\infty}$. This implies that  there is a subsequence of
geodesics $\{\beta_j\}$ so that, as $j\to \infty$,  it  converges
to a minimal geodesic $\beta_\infty: (-\infty,\infty)\to M_{\infty}$ (minimal
geodesic means a globally distance minimizing geodesic).  It follows that 
our limiting manifold $M_{\infty}$ contains a straight line. Since the
curvature of $M_{\infty}$ is zero, by the splitting theorem our
manifold splits off a line and therefore is diffeomorphic to the 
cylinder $S^1\times \mathbb{R}$.

We next observe that the limiting curve $\gamma_{\infty}$ is a geodesic,
as shown  in the following lemma.

\begin{lem}
\label{lem-geodesic}
The geodesic curvature $\kappa$ of the curve $\gamma_{\infty}$ is zero.
\end{lem}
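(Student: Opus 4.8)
The plan is to exploit the variational characterization of the isoperimetric curves: each $\gamma_{t_k}$ is a critical point of the isoperimetric ratio \eqref{eq-isop}, so it has \emph{constant} geodesic curvature given by an explicit formula in $L(t_k)$, $A_1(t_k)$, $A_2(t_k)$, and that formula forces the curvature to vanish in the limit.

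First I would record the Euler--Lagrange equation for \eqref{eq-isop}. By standard regularity theory the minimizing curve $\gamma_t$ is a smooth embedded closed curve, and since it is an \emph{unconstrained} critical point of $\gamma\mapsto L^2(\gamma)\,(A_1(\gamma)^{-1}+A_2(\gamma)^{-1})$ among simple closed curves, the first variation vanishes for every normal perturbation. Using $\delta L=-\int_{\gamma_t}\kappa\,\varphi\,ds$, $\delta A_1=-\int_{\gamma_t}\varphi\,ds$, $\delta A_2=\int_{\gamma_t}\varphi\,ds$ (here $\varphi$ is the normal speed of the variation and the normal is chosen to point into the region of area $A_1$), one finds that $\kappa$ is constant along $\gamma_t$ and
\[
\abs{\kappa}\;=\;\frac{L(t)\,\abs{A_1(t)-A_2(t)}}{2\,A_1(t)\,A_2(t)}\,,
\]
the overall sign depending only on the labelling convention.

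Next I would estimate this quantity as $t\to-\infty$. By Lemma \ref{lem-isop}, $L(t)\le C$ for all $t<t_0$; by Corollary \ref{claim-area-t}, $c\,|t|\le A_j(t)\le C\,|t|$ for $j=1,2$; and since the total area of the evolving surface is $8\pi\,|t|$ we have $\abs{A_1(t)-A_2(t)}\le A_1(t)+A_2(t)=8\pi\,|t|$. Substituting gives
\[
\abs{\kappa}\;\le\;\frac{C\cdot 8\pi\,|t|}{2\,c^{2}\,|t|^{2}}\;=\;\frac{C'}{|t|}\,,
\]
so the (constant) geodesic curvature of $\gamma_{t_k}$ tends to $0$ as $k\to\infty$. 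Finally I would pass to the limit: parametrizing each $\gamma_{t_k}$ by $g(t_k)$--arclength from $p_k$, the curves solve the geodesic--curvature ODE with right-hand side $\kappa(\gamma_{t_k})\to0$ and with coefficients (the Christoffel symbols of $g(t_k)$) converging in $C^\infty$ on compact sets to those of $g_\infty$, by the Cheeger--Gromov convergence $(M,g(t_k),p_k)\to(M_\infty,g_\infty,p_\infty)$ already established. Since $L(t_k)\le C$ and $p_k\to p_\infty$, continuous dependence of ODE solutions on data yields $C^\infty$ convergence of $\gamma_{t_k}$ (along the chosen subsequence) to $\gamma_\infty$, which therefore satisfies $\nabla_{\dot\gamma_\infty}\dot\gamma_\infty=0$; that is, $\kappa=0$ for $\gamma_\infty$. (Combined with $\gamma_\infty$ being a closed curve of finite length on the flat cylinder $M_\infty=S^1\times\mathbb{R}$, this will also give the assertion from the outline that $\gamma_\infty$ is one of the cross circles.)

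The step I expect to be the main obstacle is making the first-variation computation rigorous, i.e. knowing that the isoperimetric minimizer exists and is a genuinely smooth curve of constant geodesic curvature rather than merely an almost-everywhere statement. This is classical for the two-dimensional isoperimetric problem and is implicit in the isoperimetric analysis of \cite{Ha}, \cite{HT}; granting it, the remaining steps are the elementary estimate above and the standard ODE limiting argument.
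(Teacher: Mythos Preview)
Your proposal is correct and follows essentially the same route as the paper: derive from the first variation of the isoperimetric ratio that $\kappa$ is constant with $\kappa=\tfrac{L}{2}\bigl(\tfrac{1}{A_1}-\tfrac{1}{A_2}\bigr)$, then use the bounds $L\le C$ and $A_j\to\infty$ (or your equivalent estimate $|\kappa|\le C'/|t|$) to conclude $\kappa\to0$. The only cosmetic difference is that the paper obtains the formula by varying through the specific family of parallel curves $\gamma_t^r$ rather than through an arbitrary normal perturbation, and simply asserts the passage to the limit where you spell out the ODE argument.
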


\begin{proof}
As in \cite{Ha}, at each time $t < t_0 <0$ we start with the isoperimetric curve $\gamma_t$  
and we construct the one-parameter family of parallel curves $\gamma_t^r$ at distance $r$  from $\gamma_t$ on either side. We take $r > 0$ when the curve moves from the region of area $A_1(t)$ to the region of area $A_2(t)$,
 and $r < 0$ when it moves the other way. We then regard $L, A_1, A_2$ and 
$$I = I(\gamma_t^r) = \frac{1}{4\pi}L^2(\gamma_t^r)\, \left (\frac{1}{A_1(\gamma_t^r)} + \frac{1}{A_2(\gamma_t^r)} \right )$$ 
as functions of $r$ and $t$. By the computation in \cite{Ha} we have
$$\frac{\partial  A_1}{ \partial r} = L\qquad  \frac{\partial  A_2}{\partial r} = -L\qquad \frac{d L}{dr} = \int \kappa\, ds = \kappa\, L$$
where $\kappa$ is the geodesic curvature of the curve $\gamma_t^r$. By a  standard variational argument $\kappa$ is constant on $\gamma_t$. If $A := A_1 + A_2$ is the total surface area, we have
$$\log I = 2\log L + \log A - \log A_1 - \log A_2 - \log(4\pi).$$
Since $\frac{\partial I}{\partial r}|_{r=0} = 0$, we conclude that 
\begin{eqnarray*}
0=  \frac{2}{L}\, \frac{\partial L}{\partial r} + \frac{1}{A}\, \frac{\partial A}{\partial r} - \frac{1}{A_1}\, \frac{\partial A_1}{\partial r} - \frac{1}{A_2}\, \frac{\partial A_2}{\partial r} =
  \frac{2}{L}\,  \kappa L - \frac{1}{A_1}\,  L + \frac{1}{A_2}\,L
\end{eqnarray*}
which leads to
$$\kappa = \frac{L}{2}\, (\frac{1}{A_1} - \frac{1}{A_2}).$$
By Lemmas \ref{lem-isop} and  \ref{lem-lower-l} and (\ref{eq-inf-area}) we conclude that 
$$\kappa_{\infty} := \lim_{t\to -\infty} \kappa = \lim_{t\to -\infty} \frac{L}{2}\, (\frac{1}{A_1} - \frac{1}{A_2}) = 0,$$
which means the geodesic curvature $\kappa_{\infty}$ of the limiting curve $\gamma_{\infty}$
is zero.
\end{proof}

We have just shown that our limiting manifold is a cylinder $M_{\infty} = S^1\times \mathbb{R}$  and $\gamma_{\infty}$ is a closed geodesic on $M_{\infty}$. Hence,  $\gamma_{\infty}$ is one of the cross circles of $M_{\infty}$.  

We have the following picture assuming  that  the radius of $\gamma_{\infty}$ is $1$.

\begin{figure}[htbp]
\begin{center}
 \input{slika.pstex_t}
 \label{fig:slika}
\end{center}
\end{figure}

Assume that we have a  foliation of our limiting  cylinder $M_\infty$ by circles $\beta_w$, where $|w|$ is the  distance from $\beta_w$ to $\gamma_{\infty}$, taking $w > 0$ if $\beta_w$ lies  on the upper side of the cylinder and $w < 0$ if $\beta_w$ lies on its lower side.  Denote by $\beta_w^k$  the curve on $M$  such that $\phi_k^*\beta_w^k = \beta_w$.

One of the properties of the  cylinder is that for every $\delta > 0$ there is a $w_0 > 0$ so that for every $|w| \ge w_0$ we have
$$|\sup_{x\in \beta_w, y\in \gamma_{\infty}}\dist(x,y) - \inf_{x\in \beta_w,y\in \gamma_{\infty}}\dist(x,y)| \le \sqrt{w^2+\pi^2} - |w| \le \frac{C}{|w|} \le \frac{C}{w_0} < \frac \delta 2$$
where the distance is computed in the cylindrical metric on $M_{\infty}$.

Since for every sequence $t_k\to -\infty$, there exists a   subsequence for which we have 
uniform convergence of our metrics $\{g(t_k)\}$ on  bounded sets around the points $p_k \in \gamma_{t_k}$, the previous observation implies  the following claim which will be used frequently from now on. 

\begin{claim}
\label{claim-compare}
For every sequence $t_k\to -\infty$ and every $\delta > 0$ there exists  $k_0$ and $w$ so that for $k \ge k_0$,
$$|\sup_{x\in \beta_k^w, y\in \gamma_{t_k}}\dist_{g(t_k)}(x,y) - \inf_{x\in \beta_k^w, y\in \gamma_{t_k}}\dist_{g(t_k)}(x,y)| < \delta.$$
\end{claim}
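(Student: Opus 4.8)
The claim asserts a uniform-in-$k$ comparison, for large $k$, between the supremum and infimum over $x\in\beta_k^w$, $y\in\gamma_{t_k}$, of the distance $\dist_{g(t_k)}(x,y)$: namely that for a suitable fixed $w$ these two quantities differ by less than $\delta$. The plan is to transfer the corresponding exact statement on the limiting cylinder $M_\infty=S^1\times\mathbb R$ — which was established in the displayed inequality immediately preceding the claim — back to the approximating metrics $g(t_k)$ via the Cheeger--Gromov convergence.

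\medskip

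First I would fix $\delta>0$ and apply the cylinder estimate above: choose $w_0>0$ so that for all $|w|\ge w_0$,
$$\Big|\sup_{x\in\beta_w,\,y\in\gamma_\infty}\dist_{g_\infty}(x,y)-\inf_{x\in\beta_w,\,y\in\gamma_\infty}\dist_{g_\infty}(x,y)\Big|<\frac\delta3,$$
and fix one such value $w$ (say $w=w_0$). Let $K\subset M_\infty$ be a compact set large enough to contain $\beta_w$, $\gamma_\infty$, and every minimizing $g_\infty$-geodesic between a point of $\beta_w$ and a point of $\gamma_\infty$ (these stay in a bounded region since both curves are bounded and the cylinder metric is a product). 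By definition of pointed Cheeger--Gromov convergence there are diffeomorphisms $\phi_k:K\to K_k\subset M$ with $\phi_k^*g(t_k)\to g_\infty$ in $C^\infty(K)$, and by construction $\phi_k(\beta_w)=\beta_w^k$ and $\phi_k(\gamma_\infty)$ is the portion of $\gamma_{t_k}$ lying in $K_k$; since $L(t_k)\le C$ by Lemma~\ref{lem-isop} and $\gamma_{t_k}\to\gamma_\infty$, for $k$ large all of $\gamma_{t_k}$ is captured.

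\medskip

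Next I would compare the two distance functions. For any $x\in\beta_w$, $y\in\gamma_\infty$, a minimizing $g_\infty$-geodesic from $x$ to $y$ lies in $K$, so $\dist_{g(t_k)}(\phi_k(x),\phi_k(y))\le \mathrm{length}_{\phi_k^*g(t_k)}(\text{that path})\to \dist_{g_\infty}(x,y)$; conversely, a minimizing $g(t_k)$-geodesic between $\phi_k(x)$ and $\phi_k(y)$ has $g(t_k)$-length at most, say, $2\,\mathrm{diam}_{g_\infty}(K)$ for large $k$, hence by a compactness/exhaustion argument stays in a fixed larger compact set $K'$, and pulling back gives the reverse estimate. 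Thus $\dist_{g(t_k)}(\phi_k(\cdot),\phi_k(\cdot))\to\dist_{g_\infty}(\cdot,\cdot)$ uniformly on $\beta_w\times\gamma_\infty$. Choosing $k_0$ so that this uniform error is less than $\delta/3$ for $k\ge k_0$, the triangle inequality for the sup and inf yields
$$\Big|\sup_{x\in\beta_k^w,\,y\in\gamma_{t_k}}\dist_{g(t_k)}(x,y)-\inf_{x\in\beta_k^w,\,y\in\gamma_{t_k}}\dist_{g(t_k)}(x,y)\Big|<\delta,$$
which is the claim.

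\medskip

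The main obstacle is the one genuinely analytic point hidden in the second paragraph: distances are \emph{not} automatically preserved under Cheeger--Gromov convergence on noncompact limits, because a minimizing geodesic for $g(t_k)$ could in principle wander out of $K_k$. Here this is handled because the limit is a complete cylinder with a product metric, so distances between two fixed bounded curves are realized by paths in a fixed compact region, and the convergence $\phi_k^*g(t_k)\to g_\infty$ on a slightly larger compact set forces the $g(t_k)$-minimizers to have length converging to the $g_\infty$-distance; one makes this rigorous by an exhaustion argument together with the uniform curvature and injectivity-radius bounds already in force. Everything else is routine.
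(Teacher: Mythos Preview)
Your proposal is correct and is precisely the approach the paper takes: the paper's ``proof'' of this claim is the single sentence immediately preceding it, which says that the cylinder estimate together with the uniform convergence of $g(t_k)$ on bounded sets around $p_k$ yields the claim --- you have simply unpacked this. Your identification of the one subtle point (that $g(t_k)$-minimizers between $\beta_k^w$ and $\gamma_{t_k}$ must stay in a fixed compact region, which follows since their length is bounded by the length of the pushed-forward cylinder geodesic) is appropriate and correctly resolved.
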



\medskip

The variant of the Bishop-Gromov volume comparison principle (since  $R \ge 0$) implies the following area comparison of the annuli, for each $t < 0$, 
\begin{equation}
\label{eq-comparison}
\frac{\ar(b_1 \le s \le b_2)}{\ar(a_1 \le s \le a_2)} \le \frac{b_2^2-b_1^2}{a_2^2-a_1^2}
\end{equation}
where $a_1 \le a_2 \le b_1 \le b_2$ and $s$ is the  distance from a fixed point on $(M,g(t))$, computed with respect to the metric $g(t)$. We are going to use this fact in the  lemma that follows. 


For each $k$, $\gamma_{t_k}$ splits our manifold in two parts, call them $M_{1k}$ and $M_{2k}$ with  areas $A_{1k}$ and $A_{2k}$ respectively. Choose points $x_k \in M_{1k}$ and $y_k\in M_{2k}$ so that
$$\dist_{g(t_k)}(x_k,\gamma_{t_k}) = \max_{z\in M_{1k}}\dist_{g(t_k)}(\gamma_{t_k},z) =: \rho_k$$
and
$$\dist_{g(t_k)}(y_k,\gamma_{t_k}) = \max_{z\in M_{2k}}\dist_{g(t_k)}(\gamma_{t_k},z) =: \sigma_k.$$
By the  definition of $\sigma_k$ and $\rho_k$ and  from the convergence of $(M,g(t_k),p_k)$ to an infinite cylinder,  we have
$$\lim_{k\to\infty}\sigma_k = +\infty  \qquad \mbox{and} \qquad  \lim_{k\to\infty}\rho_k = +\infty.$$
\begin{lem}
\label{lem-ar-below}
There are uniform constants  $k_0>0$ and  $c > 0$ so that the
$$ \ar \, (B_{\rho_k}(x_k)) \ge c\, \rho_k \quad \mbox{and} \quad \ar \,  (B_{\sigma_k}(y_k)) \ge c\, \sigma_k
,\qquad \mbox{for all} \,\,\, k \geq k_0$$ where both the distance and the area  are  computed with
respect to the metric $g(t_k)$.
\end{lem}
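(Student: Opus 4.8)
The plan is to reduce everything to a uniform non-collapsing estimate for small balls and then pack a long minimizing geodesic inside $B_{\rho_k}(x_k)$ with many disjoint such balls. By symmetry it suffices to treat $B_{\rho_k}(x_k)$; the bound for $B_{\sigma_k}(y_k)$ follows by the identical argument with $y_k,\sigma_k$ in place of $x_k,\rho_k$. Throughout, distances, balls and areas are taken with respect to $g(t_k)$.

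First I would record the non-collapsing input. Since $R(\cdot,t_k)\le C$ and, by \eqref{eq-injrad}, $\injrad(g(t_k))\ge\delta>0$, both uniformly in $k$, standard volume comparison (G\"unther's inequality, valid inside the injectivity radius and below the first conjugate point) yields a fixed radius $r_0=r_0(\delta,C)>0$ and a constant $c_1=c_1(\delta,C)>0$, independent of $k$, with
\begin{equation*}
\ar\big(B_{r_0}(p)\big)\ge c_1\qquad\text{for every }p\in M\text{ and every }k.
\end{equation*}

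Next I would construct the geodesic and pack it. Choose $w_k\in\gamma_{t_k}$ with $\dist(x_k,w_k)=\dist(x_k,\gamma_{t_k})=\rho_k$ and a unit-speed minimizing geodesic $\eta_k\colon[0,\rho_k]\to M$ with $\eta_k(0)=w_k$, $\eta_k(\rho_k)=x_k$. Every subsegment of a minimizing geodesic is minimizing, so $\dist(\eta_k(s),x_k)=\rho_k-s$ and $\dist(\eta_k(s),\eta_k(s'))=|s-s'|$ for all $s,s'$. Put $N_k:=\lfloor\rho_k/(2r_0)\rfloor$ and $q_j^k:=\eta_k(2r_0 j)$ for $j=1,\dots,N_k$. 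Then $\dist(q_i^k,q_j^k)\ge 2r_0$, so the balls $B_{r_0}(q_j^k)$ are pairwise disjoint, while $\dist(q_j^k,x_k)=\rho_k-2r_0 j\le\rho_k-2r_0$ ensures $B_{r_0}(q_j^k)\subset B_{\rho_k}(x_k)$. Hence
\begin{equation*}
\ar\big(B_{\rho_k}(x_k)\big)\ \ge\ \sum_{j=1}^{N_k}\ar\big(B_{r_0}(q_j^k)\big)\ \ge\ N_k\,c_1\ \ge\ \Big(\frac{\rho_k}{2r_0}-1\Big)c_1 .
\end{equation*}
Since $\rho_k\to\infty$ and $\sigma_k\to\infty$, there is $k_0$ so that $\rho_k,\sigma_k\ge 4r_0$ for $k\ge k_0$, whence $N_k\ge\rho_k/(4r_0)$ and the claimed bound holds with $c:=c_1/(4r_0)$.

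The only delicate point is the uniform lower bound $\ar(B_{r_0}(p))\ge c_1$, which rests on the uniform injectivity radius estimate \eqref{eq-injrad} (Klingenberg in even dimensions) together with the two-sided control $0<R(\cdot,t_k)\le C$ already established for $t\le t_0$; once this is available the rest is elementary metric geometry, using only $\dist(x_k,\gamma_{t_k})=\rho_k$ and $\rho_k\to\infty$. (One could instead combine $M_{1k}\subset B_{2\rho_k+L_k}(x_k)$, the area bound $\ar(M_{1k})\ge c|t_k|$ from Corollary \ref{claim-area-t}, and the Bishop--Gromov comparison \eqref{eq-comparison} to get $\ar(B_{\rho_k}(x_k))\ge c|t_k|$, but that would require the additional input $\rho_k\le C|t_k|$, which the packing argument sidesteps.)
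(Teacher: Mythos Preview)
Your proof is correct, but the route is genuinely different from the paper's.

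The paper does not pack small balls along a geodesic. Instead it applies the Bishop--Gromov annulus comparison \eqref{eq-comparison} with $a_1=0$, $a_2=b_1=\sigma_k$, $b_2=\sigma_k+1$ to obtain
\[
\ar\big(B_{\sigma_k}(y_k)\big)\ \ge\ \frac{\sigma_k}{3}\,\ar\big(\{\sigma_k\le s\le \sigma_k+1\}\big),
\]
and then shows $\ar(\{\sigma_k\le s\le \sigma_k+1\})\ge c$ by exhibiting a subset $V_k$ of this annulus that lies within a fixed $g(t_k)$-distance of $p_k\in\gamma_{t_k}$; the already-established pointed convergence to the cylinder then forces $V_k$ to limit onto a definite piece of $\{\dist(z,\gamma_\infty)\le \tfrac12\}\subset S^1\times\mathbb{R}$, giving the uniform lower bound. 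Your argument replaces this geometric limiting step with the G\"unther lower volume bound (valid under the uniform sectional curvature upper bound and injectivity radius estimate \eqref{eq-injrad}) and a straightforward packing of $\lfloor\rho_k/(2r_0)\rfloor$ disjoint $r_0$-balls along a minimizing geodesic from $\gamma_{t_k}$ to $x_k$. This is cleaner and more self-contained: it uses neither Claim \ref{claim-compare} nor the cylinder limit, only $\rho_k\to\infty$ and the uniform bounds on $R$ and $\injrad$. The paper's approach, on the other hand, fits the surrounding narrative, and the same annulus estimate is reused in the proof of Lemma \ref{lem-ar-as}. Your closing parenthetical is also accurate: deducing the lemma directly from $M_{1k}\subset B_{2\rho_k+L_k}(x_k)$ and Corollary \ref{claim-area-t} would indeed require $\rho_k\le C|t_k|$, which is only proved afterwards.
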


\begin{proof}
We take  $a_1 = 0$, $a_2 = b_1 = \sigma_k \geq 1$ and $b_2 = \sigma_k + 1$ in \eqref{eq-comparison}. Then, if $s$ is the distance from $y_k$ computed with respect to $g(t_k)$, we have 
$$\frac{\ar \, (\sigma_k \le s \le \sigma_k + 1)}{\ar \, (0 \le s \le \sigma_k)} \le \frac{(\sigma_k+1)^2 - \sigma_k^2}{\sigma_k^2} \le \frac{3}{\sigma_k}.$$
Hence, 
\begin{equation}
\label{eq-from-below}
\ar\, (B_{\sigma_k}(y_k)) \ge \frac{\sigma_k}{3}\,\, \ar \,  (\sigma_k \le s \le \sigma_k + 1).
\end{equation}
Having (\ref{eq-from-below}), the proof of Lemma \ref{lem-ar-below} is finished once we show the following estimate: there are uniform constants $c > 0$ and $k_1$ so that for $k \ge k_1$,
\begin{equation}
\label{eq-below1}
\ar \,  (\sigma_k \le s \le \sigma_k + 1) \ge c.
\end{equation}
To prove the estimate, denote by $U_k := \{z\,\, |\,\, \sigma_k \le s \le \sigma_k +1\}$. We consider the set 
$$V_k := \{z\,\, |\,\, \dist_{t_k}(z,\gamma_{t_k}) \le \frac{1}{2}, \,\,\, \overline{y_kz}\cap\gamma_{t_k} \neq \emptyset\}$$
where $\overline{y_kz}$ denotes a geodesic connecting the points $y_k$ and $z$.
It is enough to show  that  $V_k \subset U_k$, for $k$ sufficiently large, and that 
$\ar \, (V_k) \geq c >0$. 
To prove that  $V_k \subset U_k$, take $z\in V_k$ and let
$w_k\in \gamma_{t_k}$ be such that $\dist_{t_k}(z,w_k) = \dist_{t_k}(z,\gamma_{t_k}) \le \frac{1}{2}$.
If $q_k := \gamma_{t_k}\cap \overline{zy_k}$,  then
$$\sigma_k = \dist_{t_k}(y_k,\gamma_{t_k}) \le \dist_{t_k}(y_k,q_k) \le \dist_{t_k}(z,y_k)$$
which implies that $\sigma_k \le \dist_{t_k}(z,y_k)$. On the other hand, by Claim \ref{claim-compare} 
we have $\dist_{t_k}(w_k,y_k) \le \sigma_k + \frac{1}{2}$, 
for $k$ sufficiently large. Hence
$$\dist_{t_k}(z,y_k) \le \dist_{t_k}(y_k,w_k) + \dist_{t_k}(w_k,z) \le \sigma_k + \frac{1}{2} + \frac{1}{2}
< \sigma_k + 1$$
for $k$ sufficiently large. This proves that $V_k \subset U_k$ and hence 
$$\ar\,(U_k) \ge \ar\, (V_k).$$
To estimate   $ \ar\, (V_k)$ from below,  we recall that for $p_k \in \gamma_{t_k}$, we have
 pointed convergence  of $(M,g(t_k),p_k)$ to a cylinder which is uniform on compact sets
around $p_k$. To use this we need to show  there is a constant $C >0$, for which   
$$V_k \subset B_{t_k}(p_k,C)\qquad \mbox{for all} \,\,\, k\ge k_0.$$
Let $z\in V_k$ and let $q_k \in \overline{y_kz}\cap\gamma_{t_k}$. Then by Claim \ref{claim-compare}
for $k$ sufficiently large, we have
\begin{equation}
\label{eq-both-sides}
\sigma_k - 1 \le \dist_{g(t_k)}(y_k,q_k) \le \sigma_k + 1.
\end{equation}
We also have 
$$\dist_{g(t_k)}(p_k,z) \le \dist_{g(t_k)}(p_k,q_k) + \dist_{g(t_k)}(q_k,z).$$
Since, $z \in V_k \subset U_k$ and \eqref{eq-both-sides} holds, we get 
$$\dist_{g(t_k)}(q_k,z) \le \dist_{g(t_k)}(y_k,z) - \dist_{g(t_k)}(y_k,q_k) \le \sigma_k + 1 - \sigma_k + 1 = 2$$
which combined with 
$\dist_{g(t_k)}(p_k,q_k)\le L(\gamma_{t_k}) \le C$ gives us 
 the bound 
$$\dist_{g(t_k)}(p_k,z) \le C.$$

This guarantees that, as $k\to \infty$, $V_k$ converges to a part of
the cylinder $S^1\times \mathbb{R}$, while $(M,g(t_k),p_k) \to
(S^1\times\mathbb{R},g_\infty,p_{\infty})$ and $g_\infty$ is the cylindrical
metric. Recall that $\gamma_{t_k}\to \gamma_{\infty}$ and
$\gamma_{\infty}$ is one of the cross circles on $S^1\times \R$.  It
 follows that $V_k$ converges as $k\to\infty$ to the upper or lower part of the set  
$\{z\in S^1\times\R \,\, |\,\, \dist_{g_\infty}(z,\gamma_{\infty}) \le
\frac{1}{2}\}$ with respect to $\gamma_\infty$.  This implies that
\begin{equation}
\label{eq-around-p}
c \le \ar\, (\{\sigma_k \le s \le \sigma_k + 1\}) \le C\qquad  \mbox{for} \,\,\, k \ge k_0,
\end{equation}
for some uniform constants $c, C > 0$ finishing the proof of (\ref{eq-below1}) and therefore Lemma \ref{lem-ar-below}.
\end{proof}

Let us  denote briefly by $A_{\sigma_k} := \ar\, (B_{\sigma_k}(y_k))$ and $A_{\rho_k} := \ar\, (B_{\rho_k}(x_k))$.

\begin{lem}
\label{lem-ar-as}
There exist a number  $k_0$ and constants $c_1>0, c_2>0$,  so that
$$c_1\, |t_k| \le A_{\rho_k} \le c_2\,  |t_k|  \quad  \mbox{and} \quad c_1\, |t_k| \le A_{\sigma_k} \le c_2\,  |t_k|\qquad \mbox{for all} \,\,\,  k\ge k_0.$$
\end{lem}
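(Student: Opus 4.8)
The plan is to get the two upper bounds for free and to reduce the two lower bounds to the area lower bounds of Corollary \ref{claim-area-t} via a doubling-type application of the annular comparison \eqref{eq-comparison}. For the upper bounds there is nothing to do: $B_{\rho_k}(x_k)$ and $B_{\sigma_k}(y_k)$ are subsets of $M$, so $A_{\rho_k}\le \ar(M,g(t_k))=8\pi\,|t_k|$ and $A_{\sigma_k}\le 8\pi\,|t_k|$, and we may take $c_2=8\pi$.

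For the lower bounds, the key observation is that each ball $B_{\rho_k}(x_k)$ already captures a definite fraction of the area $A_{1k}=\ar(M_{1k})$, which is $\ge c\,|t_k|$ by Corollary \ref{claim-area-t}. First I would show $M_{1k}\subset B_{3\rho_k}(x_k)$ for $k$ large: given $z\in M_{1k}$, choose $q\in\gamma_{t_k}$ with $\dist_{g(t_k)}(z,q)=\dist_{g(t_k)}(z,\gamma_{t_k})\le\rho_k$ and $q'\in\gamma_{t_k}$ with $\dist_{g(t_k)}(x_k,q')=\rho_k$; moving along $\gamma_{t_k}$ from $q$ to $q'$ costs at most $L_k$, so $\dist_{g(t_k)}(z,x_k)\le 2\rho_k+L_k$, and since $L_k\le C$ by Lemma \ref{lem-isop} while $\rho_k\to\infty$, we have $2\rho_k+L_k\le 3\rho_k$ for all $k\ge k_0$. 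Hence $A_{1k}\le \ar(B_{3\rho_k}(x_k))$. Next I would bound $\ar(B_{3\rho_k}(x_k))$ by $A_{\rho_k}$ by iterating \eqref{eq-comparison} twice, with $s$ the $g(t_k)$-distance to $x_k$: taking $a_1=0,\ a_2=b_1=\rho_k,\ b_2=2\rho_k$ gives $\ar(B_{2\rho_k}(x_k))-A_{\rho_k}\le 3\,A_{\rho_k}$, i.e. $\ar(B_{2\rho_k}(x_k))\le 4A_{\rho_k}$; then $a_1=0,\ a_2=b_1=2\rho_k,\ b_2=3\rho_k$ gives $\ar(B_{3\rho_k}(x_k))\le \tfrac94\,\ar(B_{2\rho_k}(x_k))\le 9A_{\rho_k}$. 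Combining,
$$A_{\rho_k}\ \ge\ \tfrac19\,\ar(B_{3\rho_k}(x_k))\ \ge\ \tfrac19\,A_{1k}\ \ge\ \tfrac{c}{9}\,|t_k|,\qquad k\ge k_0,$$
so $c_1=c/9$ works. The argument for $A_{\sigma_k}$ is verbatim the same with $(x_k,\rho_k,M_{1k},A_{1k})$ replaced by $(y_k,\sigma_k,M_{2k},A_{2k})$: $M_{2k}\subset B_{3\sigma_k}(y_k)$ for $k$ large, $\ar(B_{3\sigma_k}(y_k))\le 9A_{\sigma_k}$, hence $A_{\sigma_k}\ge \tfrac{c}{9}\,A_{2k}\ge \tfrac{c}{9}\,|t_k|$.

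The one step that genuinely uses curvature is the passage from $\ar(B_{3\rho_k}(x_k))$ back down to $\ar(B_{\rho_k}(x_k))$: a priori almost all of the mass of the large ball could concentrate in the outer annulus $\rho_k\le s\le 3\rho_k$, and it is exactly this that the comparison \eqref{eq-comparison} (available because $R\ge 0$) prevents. Everything else is the triangle inequality combined with facts already in hand, namely $L_k\le C$ (Lemma \ref{lem-isop}), $\rho_k,\sigma_k\to\infty$, and $A_{1k},A_{2k}\ge c\,|t_k|$ (Corollary \ref{claim-area-t}); note in particular that we do \emph{not} need a lower bound of the form $\rho_k,\sigma_k\gtrsim|t_k|$ (which is true but harder to see directly), because the area comparison lets us compare $B_{\rho_k}$ to all of $M_{1k}$ without knowing how large $\rho_k$ is.
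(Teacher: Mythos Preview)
Your proof is correct and is a genuinely cleaner route than the paper's. The paper proceeds by first invoking Lemma~\ref{lem-ar-below} to get linear lower bounds $A_{\rho_k}\ge c\,\rho_k$ and $A_{\sigma_k}\ge c\,\sigma_k$, and then spends most of the argument establishing that $\rho_k$ and $\sigma_k$ are themselves comparable to $|t_k|$: it first shows $\rho_k$ and $\sigma_k$ are mutually comparable via \eqref{eq-comparison} and Corollary~\ref{claim-area-t}, then bounds $\rho_k+\sigma_k\le C|t_k|$ from above, and finally squeezes out $\rho_k,\sigma_k\ge c|t_k|$ by another comparison estimate. Your argument bypasses all of this: the containment $M_{1k}\subset B_{3\rho_k}(x_k)$ via the triangle inequality, together with the doubling consequence of \eqref{eq-comparison}, directly ties $A_{\rho_k}$ to $A_{1k}$, and then Corollary~\ref{claim-area-t} finishes. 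In particular you never call on Lemma~\ref{lem-ar-below}.

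The one thing to be aware of is that the paper later \emph{uses} the intermediate estimate $c|t_k|\le\rho_k,\sigma_k\le C|t_k|$ in the proof of Theorem~\ref{prop-sphere} (and of Lemma~\ref{prop-plane}), citing it as ``In the proof of Lemma~\ref{lem-ar-as} we showed\ldots''. Your shortcut does not produce this byproduct, so if you adopt your argument you would need to supply that estimate separately where it is invoked. This is not hard---for instance your containment $M_{1k}\subset B_{3\rho_k}(x_k)$ already gives $\diam(M,g(t_k))\le 3(\rho_k+\sigma_k)+L_k$, and combined with Lemma~\ref{lem-ar-below} one recovers $\rho_k,\sigma_k\sim|t_k|$---but it is worth flagging that the paper's longer proof is doing double duty.
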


\begin{proof}
Notice  that 
\begin{equation}
\label{eq-ar-id}
A_{\rho_k} + A_{\sigma_k} \le 2\, A(t_k) = 16 \pi\,  |t_k|
\end{equation}
since  $A(t_k) = 8\pi|t_k|$ is the total surface area. Hence, 
\begin{equation}
\label{eq-one-side-ar}
A_{\rho_k} \le C\, |t_k| \qquad \mbox{and} \qquad  A_{\sigma_k} \le C\, |t_k|.
\end{equation}
To establish  the bounds from below,   we will use Lemma
\ref{lem-ar-below} and show that  there is a uniform constant $c$ so that
\begin{equation}
\label{eq-comp-rstau}
\sigma_k \ge c\, |t_k| \qquad \mbox{and} \qquad  \rho_k \ge c\, |t_k|\qquad  \mbox{for all} \,\,\, k\ge k_0.
\end{equation}

We will first show there are uniform constants $c>0$ and $C < \infty$,  so that
\begin{equation}
\label{claim-comp-rs}
c\, \rho_k \le \sigma_k \le C \, \rho_k.
\end{equation}
Recall that $\sigma_k = \dist_{g(t_k)}(y_k,\gamma_{t_k})$. By our choice
of points $x_k, y_k$ and the figure we have that the
$\diam(M, g(t_k)) \le \sigma_k + \rho_k + 1$ for $k \ge k_0$, sufficiently
large. We also have that the subset of $M$ that corresponds to area $A_2(t_k)$ 
contains a ball $B_{\sigma_k}(y_k)$. By Corollary  \ref{claim-area-t} 
and the comparison inequality (\ref{eq-comparison}), we have 

\begin{eqnarray*}
c &\le& \frac{A_1(t_k)}{A_2(t_k)} \le \frac{\ar(B_{\rho_k+\sigma_k+1}(y_k)
\backslash B_{\sigma_k}(y_k))}{\ar(B_{\sigma_k}(y_k))}\\
&=& \frac{\ar(\sigma_k \le s \le \rho_k + \sigma_k+1)}{\ar(0 \le s \le \sigma_k)} \le 
\frac{(\rho_k+\sigma_k+1)^2 - \sigma_k^2}{\sigma_k^2}.
\end{eqnarray*}
Using the previous inequality we obtain the bound
$$c\, \sigma_k^2 - 2  \,  \rho_k - 2\sigma_k  -2\rho_k\, \sigma_k - 1 \le \rho_k^2.$$
We claim there is a uniform constant $c$ so that $\sigma_k \le c\rho_k$. If not, then  $\rho_k << \sigma_k$ for $k >>1$, and from the
inequality above we get
$$\frac{c}{2}\, \sigma_k^2 \le \rho_k^2\qquad \mbox{for}\,\,\,  k >>1.$$
In any case there are $k_1$ and $C_1 > 0$ so that
\begin{equation}
\label{eq-rs1}
\rho_k \le C_1\sigma_k\qquad \mbox{for} \,\,\, k\ge k_1.
\end{equation}
By a similar
analysis as above there are $k_2 \ge k_1$ and $C_2 > 0$ such  that
\begin{equation}
\label{eq-rs2}
\sigma_k \le C_2\rho_k\qquad \mbox{for} \,\,\, k\ge k_1.
\end{equation}

We will now conclude the proof of Lemma \ref{lem-ar-as}. 
By Lemma \ref{lem-ar-below} and (\ref{eq-ar-id}) it follows that
$$\rho_k + \sigma_k \le C|t_k|\qquad \mbox{for} \,\,\, k >>1.$$
By (\ref{eq-rs1}) and (\ref{eq-rs2}) it follows that 
$$\rho_k \le C\, |t_k|  \quad  \mbox{and} \quad  \sigma_k \le C\, |t_k|\qquad \mbox{for} \,\,\, k >>1.$$
Moreover, by (\ref{eq-comparison}), we have 
\begin{eqnarray}
\label{eq-lower-rho}
\frac{A_1(t_k)}{\ar\, (\sigma_k-1 \le s \le \sigma_k)} &\le&
\frac{\ar\, (B_{\rho_k+\sigma_k+1}(y_k)\backslash B_{\sigma_k}(y_k))}{\ar\, (\sigma_k-1 \le s \le \sigma_k)} \nonumber \\
&\le& \frac{(\rho_k+\sigma_k+1)^2 - \sigma_k^2}{\sigma_k^2 - (\sigma_k-1)^2} \le
\frac{(\rho_k+\sigma_k+1)^2-\sigma_k^2}{2\sigma_k-1} \nonumber \\
&\le& \frac{(\rho_k+\sigma_k+1)^2 - \sigma_k^2}{2\sigma_k - 1} \nonumber \\
&\le& C \, \rho_k
\end{eqnarray}
where we have used (\ref{eq-rs1}) and (\ref{eq-rs2}).  The same
analysis that yielded to (\ref{eq-around-p}) can be applied again to
conclude that
$$\ar(\sigma_k-1\le s \le \sigma_k) \le C.$$
This together with Corollary \ref{claim-area-t} and (\ref{eq-lower-rho}) imply
$$\rho_k \ge c\, |t_k|\qquad \mbox{for} \,\,\, k\ge k_0.$$
Claim \ref{claim-comp-rs} implies the same conclusion about $\sigma_k$.
This is sufficient to  conclude  the proof of Lemma \ref{lem-ar-as},  as we have explained at the beginning of it.
\end{proof}

We will now finish the proof of Theorem \ref{prop-sphere}.

\begin{proof}[Proof of Theorem \ref{prop-sphere}]
If the isoperimetric constant $I(t) \equiv 1$, it follows by a well
known result  that our solution is a family of contracting
spheres. Hence we will assume that $I(t_0) < 1$, for some $t_0 <0$, which implies all the
results in this section are applicable. We will show that this
contradicts the fact that $\lim_{t\to -\infty} v(\cdot,t) = 0$,
uniformly on $S^2$.

As explained at the beginning of this section, it suffices to find
positive constants $\delta, C$ and curves $\beta_k$, so that
\begin{equation}\label{eqn-claim-111}
L_{S^2}(\beta_k) \ge \delta > 0 \qquad \mbox{and} \qquad L_k(\beta_k) \le C< \infty
\end{equation}
where $L_{S^2}$ denotes the length of a curve computed in the round
spherical metric and $L_k$ denotes the length of a curve computed in
the metric $g(t_k)$. If we manage to find those curves $\beta_k$ that
would imply 
\begin{eqnarray*}
C \geq  L_k(\beta_k) = \int_{{\beta}_k}\sqrt{u(t_k)}\, d_{S^2} 
\geq  M\,  L_{S^2}(\beta_k) \ge M\, \delta, \quad \mbox{for} \,\,\, k \geq k_0
\end{eqnarray*}
where $d_{S^2}$ is the length element with respect to the standard round spherical metric, $M > 0$ is an arbitrary big constant and $k_0$ is sufficiently 
large so that $\sqrt{u(t_k)} \ge M$,  for $k\ge k_0$, uniformly on $S^2$
(which is justified by the fact $v(\cdot,t)$ converges uniformly to zero on
$S^2$, in $C^{1,\alpha}$ norm).  
The last  estimate is impossible, when $M$ is taken larger  than ${C}/{\delta}$,
hence finishing the proof of our theorem.

We will now prove \eqref{eqn-claim-111}. Our isoperimetric curves
$\gamma_{t_k}$ have the property that  $L_k(\gamma_{t_k}) \le C$ for all
$k$, but we do not know whether $L_{S^2}(\gamma_{t_k}) \ge \delta >
0$, uniformly in $k$. For each $k$, we will choose the curve $\beta_k$
which will satisfy \eqref{eqn-claim-111}, from a constructed family of
curves $\{\beta_{\alpha}^k\}$ that foliate our solution $(M,g(t_k))$.
Define the foliation of $(M,g(t_k))$ by the curves
$\{\beta_{\alpha}^k\}$ so that for every $\alpha$ and every $x\in
\beta_{\alpha}^k$, $\dist_{t_k}(x,y_k) = \alpha$. Choose a curve
$\beta_k$ from that foliation so that the corresponding curve
$\tilde{\beta}_k$ on $S^2$ splits $S^2$ in two parts of equal areas, where the area
is computed with respect to the round metric.

Since the isperimetric constant for the sphere $I_{S^2} = 1$, that is 
$$1 \le L_{S^2}(\tilde{\beta}_k)\, (\frac{1}{A_1} + \frac{1}{A_2}) = L_{S^2}(\tilde{\beta}_k)\,  \frac{4}{A_{S^2}}$$
we have 
$$L_{S^2}(\tilde{\beta}_k) \ge \delta > 0\qquad \mbox{for all} \,\, k.$$

To finish the proof of  the theorem  we will now show that there  exists a uniform constant $C$  so that 
$$L_k(\beta_k) \le C\qquad \mbox{for all} \,\, k.$$
To this end, we observe first that the area element of $g(t_k)$, when  computed in polar coordinates,  is
$$da_k = J_k(r,\theta)\, r\, dr\,  d\theta$$
where $J_k(r,\theta)$ is the Jacobian and $r$ is the radial distance from $y_k$.
The length of $\beta_r^k$ is given by
$$L_k^r = \int_0^{2\pi}J_k(r,\theta)\, r\, d\theta$$
which implies that 
$$\frac{L_k^r}{r}=\int_0^{2\pi}J_k(r,\theta)\, d\theta.$$
By the Jacobian comparison theorem, for each fixed $\theta$,  we have
\begin{equation}
\label{eq-jac-comp}
\frac{J_k'(r,\theta)}{J_k(r,\theta)} \le \frac{J_a'(r,\theta)}{J_a(r,\theta)}
\end{equation}
where the derivative is in the $r$ direction and $J_a(r,\theta)$
denotes the Jacobian for the model space and $a$ refers to a lower
bound on Ricci curvature (the model space is a simply connected space of constant sectional curvature equal to $a$). In our case $a = 0$ (since $R \geq 0$) and
the model space is the euclidean plane, which implies that the right
hand side of (\ref{eq-jac-comp}) is zero and therefore $J_k(r,\theta)$
decreases in $r$. Hence ${L_k^r}/{r}$ decreases in $r$. 

In the proof
of Lemma \ref{lem-ar-as} we showed that there are uniform constants
$C_1, C_2$ so that
$$C_1|t_k| \le \rho_k \le C_2|t_k| \qquad \mbox{and} \qquad c_1|t_k| \le \sigma_k \le C_2|t_k|.
$$
We have shown that $\gamma_{t_k} \to \gamma_{\infty}$ and
$\gamma_{\infty}$ is a circle in $M_{\infty}$.  Let $y_k, p_k$ be the
points which we have chosen previously.  We may assume that
$\dist_{t_k}(y_k,p_k) = \sigma_k$. Choose a curve $\bar{\gamma}_k \in
M$ so that $p_k \in \bar{\gamma}_k$ and that for every $x\in
\bar{\gamma}_k$ we have the $\dist_{t_k}(y_k,x) = \sigma_k$. Observe
that for every $x\in \gamma_{t_k}$, by the figure we have
$$\sigma_k \le \dist_{t_k}(x,y_k) \le \sigma_k + \frac{C}{\sigma_k},$$
for sufficiently big $k$. For $x\in \gamma_{t_k}$ let $z = \bar{\gamma}_{t_k}\cap\overline{y_kx}$.  
Then
$$\dist_{t_k}(x,\bar{\gamma}_k) \le \dist_{t_k}(x,z) \le \dist_{t_k}(y_k,x) -
\dist(y_k,z) \le \sigma_k + \frac{C}{\sigma_k} - \sigma_k =
\frac{C}{\sigma_k}.$$ 
This implies that the curves $\bar{\gamma}_k$
converge to $\gamma_{\infty}$ as $k\to\infty$.  Moreover, this also
implies the curve $\bar{\gamma}_k$ is at distance $\sigma_k =
O(|t_k|)$ from $y_k$ and if $s_k = \dist_{t_k}(\beta_k,y_k)$, then
$s_k = O(|t_k|)$ and we also know $L_k(\bar{\gamma}_k) \le C$, for all
$k$.  We may assume $s_k \le \sigma_k$ for infinitely many $k$, otherwise
we can consider point $x_k$ instead of $y_k$ and do the same analysis as above
but with respect to $x_k$. Since $J_k(r,\theta)$ decreases in $r$ we have
$$\frac{L_k^{s_k}}{s_k} \le \frac{L_k^{\sigma_k}}{\sigma_k}$$
that is
$$L_k(\beta_k) = L_k^{s_k}  \le \frac{s_k}{\sigma_k}\, L_k^{\sigma_k} = \frac{s_k}{\sigma_k}\,L_k(\bar{\gamma}_k)
\le C\qquad \mbox{for all} \,\,\, k$$
finishing the proof of \eqref{eqn-claim-111} and the theorem. 

\end{proof}

Based on the arguments of the proof of Theorem   \ref{prop-sphere}, we will show the following lemma, which was used in the proof  of Theorem \ref{prop-limit}.

\begin{lem}\label{prop-plane}  Assuming that our evolving metric $g(t)=\bar u \, g_e$, where $g_e$ denotes the standard euclidean metric, it  is impossible to have that the backward limit 
$$\bar u_\infty:= \lim_{t \to -\infty} \bar u(\cdot,t) = \gamma$$
for a constant $\gamma >0$. 
\end{lem}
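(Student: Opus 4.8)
The plan is to argue by contradiction, essentially rerunning the isoperimetric machinery of Section~\ref{section-sphere} but in reverse direction. Suppose that $\bar u(\cdot,t)\to\gamma$ uniformly on compact subsets of $\R^2$, for some constant $\gamma>0$. Recall from the proof of Theorem~\ref{prop-limit} that this situation arises precisely in the case $\lambda=2$, where the pressure on $S^2$ has backward limit $v_\infty(\psi,\theta)=\mu\,\cos^2\psi$ rescaled so that, after stereographic projection mapping $S$ to the origin, $\bar v(\cdot,t)\to\mu$ on $\R^2\setminus\{0\}$, i.e. $\bar u(\cdot,t)\to\gamma=\mu^{-1}$ away from the origin; parabolic regularity upgrades this to $C^\infty$ convergence on compact subsets of $\R^2$ (the puncture is filled in because $\bar u$ stays bounded above and below near $0$). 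The point is that such a limit would mean the evolving sphere $(S^2,g(t))$ looks, near the south pole, more and more like a fixed flat disc of the plane as $t\to-\infty$, which is incompatible with the sphere having total area $8\pi|t|\to\infty$ and total curvature $8\pi$ concentrating somewhere.

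First I would make this precise using the isoperimetric ratio $I(t)$ defined in \eqref{eq-isop}. Since we are claiming a contradiction, we may assume $I(t)<1$ for some $t<0$ (if $I(t)\equiv1$ then $g(t)$ is a round shrinking sphere, whose conformal factor over the plane is $\bar u(x,t)=\frac{4|t|}{(1+|x|^2/?)^2}$-type and certainly does not converge to a constant). Hence Lemmas~\ref{lem-isop}, \ref{lem-lower-l} and Corollary~\ref{claim-area-t} all apply: there are isoperimetric curves $\gamma_t$ with $c\le L_{g(t)}(\gamma_t)\le C$ and with $A_1(t),A_2(t)$ both comparable to $|t|$. Then I would take a sequence $t_k\to-\infty$ and points $p_k\in\gamma_{t_k}$. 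Here is the key dichotomy for where the $p_k$ sit relative to the origin in the plane coordinates: either, after passing to a subsequence, $p_k$ stays in a fixed compact subset of $\R^2$ (i.e. $|\Phi_S(p_k)|$ bounded), or $p_k\to S$ (the south pole corresponds to $|x|\to\infty$ in these plane coordinates — or rather to the point at infinity; whichever puncture it runs into). In the first case, the uniform $C^\infty$ convergence $\bar u(\cdot,t_k)\to\gamma$ on compacts forces $(M,g(t_k),p_k)$ to converge in the pointed Cheeger--Gromov sense to the flat plane $(\R^2,\gamma\,g_e,p_\infty)$. But then exactly as in the proof of Lemma~\ref{lem-lower-l}, the curve $\gamma_{t_k}$ (of bounded length $L_{g(t_k)}$) converges to a fixed-length curve in $\R^2$, and one of the two regions $A_1,A_2$ has area converging to a \emph{finite} number $\alpha$ while the total area is $8\pi|t_k|\to\infty$; hence $\min(A_1(t_k),A_2(t_k))$ stays bounded, contradicting $A_j(t_k)\ge c|t_k|$ from Corollary~\ref{claim-area-t}.

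The remaining case is that $p_k$ escapes to the puncture, i.e. the isoperimetric curves migrate toward the south pole $S$. This is the step I expect to be the main obstacle, and the way to handle it is to note that the plane-coordinate picture is symmetric: the metric also converges, on compact subsets of the \emph{other} chart $\R^2\setminus\{0\}$ (the one centered at $N$, via \eqref{eqn-NS}), to a flat cylindrical-type metric — indeed from $\bar v_S=(x^2+y^2)^2\bar v_N$ and $\bar v_S\to\mu$ on $\R^2\setminus\{0\}$ we get $\bar v_N(\zeta,\xi,t)\to\mu(\zeta^2+\xi^2)^{-2}$, so $\bar u_N(\cdot,t)\to\mu^{-1}(\zeta^2+\xi^2)^2$, which is a \emph{smooth flat} metric on $\R^2\setminus\{0\}$ (it is $\gamma$ times the pullback of the flat plane under inversion). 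So in \emph{both} charts the limit metric is flat away from the two poles, meaning the total curvature $\int R\,dA=8\pi$ must entirely concentrate at the two points $S,N$ as $t\to-\infty$; equivalently, for any $\eps>0$ there is a $\rho>0$ with $\int_{B_\rho(0)}R\,\bar u\,dx\ge 4\pi-\eps$ for all $t$ sufficiently negative (mass $4\pi$ at each pole, matching a conical singularity). Then I would rerun the Brezis--Merle argument of Lemma~\ref{lem-bm}: away from the two poles, since the local curvature mass is $<4\pi$, we get a uniform $L^\infty$ bound on $\bar u$ on compact sets; but $\bar u(\cdot,t)\to\gamma$ means $\bar u$ is \emph{also} uniformly bounded \emph{from below} near the poles (the conformal factor cannot be blowing up there if its pointwise limit is the finite constant $\gamma$ away from $0$ and, by \eqref{eqn-NS}, the companion chart limit is finite near $0$ too) — so $\bar u$ would be uniformly bounded on all of $S^2$, contradicting total area $8\pi|t_k|\to\infty$. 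In short: flatness at the poles plus boundedness away from them $\Rightarrow$ bounded total area, the desired contradiction. This completes the proof that $\bar u_\infty\equiv\gamma$ is impossible.
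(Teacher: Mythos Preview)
Your Case~1 is correct and, in fact, coincides with the paper's first observation: if the isoperimetric curve $\gamma_{t_k}$ meets a fixed compact set in the plane (where $\bar u\to\gamma$), then one of the two regions it bounds has finite $g(t_k)$-area, contradicting Corollary~\ref{claim-area-t}. So the curve must drift to infinity in these coordinates, i.e.\ toward the north pole $N$ (note: the puncture of the $S$-chart is $N$, not $S$; your pole bookkeeping is reversed).

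The gap is in Case~2. Your chain ``curvature concentrates at the poles $\Rightarrow$ Brezis--Merle bounds $\bar u$ away from them $\Rightarrow$ $\bar u$ bounded on all of $S^2$ $\Rightarrow$ bounded total area'' fails at several points. First, since $\bar u\to\gamma$ smoothly on compacts of $\R^2$ (including the origin), the curvature does \emph{not} concentrate at $S$; all $8\pi$ escapes to $N$. Second, via \eqref{eqn-NS} one has $\bar u_N(\zeta,\xi,t)\to\gamma\,(\zeta^2+\xi^2)^{-2}$, which blows up at $\zeta=\xi=0$; so the companion-chart limit is \emph{not} finite near $N$, and there is no upper bound on the $S^2$-conformal factor $u$ near $N$. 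Third, ``flatness'' carries no area bound: the limiting metric $\gamma(\zeta^2+\xi^2)^{-2}(d\zeta^2+d\xi^2)$ is flat yet has infinite area in any neighborhood of the origin. In short, all the area and curvature are funnelling into $N$, and nothing in your argument controls that.

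The paper handles this (the only real) case by a completely different mechanism. Having placed the isoperimetric curve far from the origin $O$, it foliates $(M,g(t))$ by the distance spheres $\{\beta_r\}$ centered at the furthest point of $M_1$ and uses the Jacobian comparison ($R\ge0$) to show $L_g(\beta_r)/r$ is nonincreasing. Since both $\sigma$ (the distance to the isoperimetric curve) and the diameter are comparable to $|t|$, the curve $\beta_{r_1}$ passing through $O$ has $L_g(\beta_{r_1})\le C\,r_1/\sigma\le\tilde C$. But $\beta_{r_1}$ lives in the large nearly-Euclidean ball $B_{\mathcal R}(O)$, where any closed curve through $O$ (or a nearby one touching $\partial B_{\mathcal R/4}(O)$) must have length of order $\mathcal R$; taking $\mathcal R$ large yields the contradiction. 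This comparison-geometry step is the missing idea in your Case~2.
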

\begin{proof}
We will use the arguments from the proof of Theorem  \ref{prop-sphere} presented above. For a given time $t$,
which will be chosen sufficiently close to $-\infty$,  we denote  by $(M,g)$ our evolving  surface at time $t$ (for simplicity we omit to write $t$ in all considered quantities below in the proof of the Lemma) and by
$\gamma$ the isoperimetric   curve which divides $M$ into two regions $M_1$ and $M_2$. 
We have seen in the proof of Theorem  \ref{prop-sphere}   that $$L_{g}(\gamma) \leq C$$
for a uniform constant $C$ and that the areas of $M_1$ and $M_2$ are comparable to $|t|$. 

Let ${\mathcal R}  >0$ be a large but uniform in time constant, which   will be chosen in the sequel. 
By our assumption, there exist a $t_0 < 0$ and  a point $O \in M$, 
so  that the metric $g$ is very close to the flat metric on the ball $B_{\mathcal R}(O)$  which is taken with respect to the metric 
$g$, for $t \leq t_0 <0$. Notice that since $g$ is  very close to the flat metric, $B_{\mathcal R}(O)$ is also close to the  Euclidean ball. 

We may assume, without loss of generality, that $O \in M_2$. Since $M_1$ and $M_2$ have unbounded areas,
as $|t| \to \infty$, the curve $\gamma$ cannot be entirely contained in $B_{\mathcal R}(O)$. Hence, 
$\gamma \cap B_{\mathcal R}(O)^c \neq \emptyset$. By choosing ${\mathcal R}$ larger than $2C$, we then have that 
$$\gamma \cap B_{ {\mathcal R}/2} (O) = \emptyset.$$
As in the proof of Theorem  \ref{prop-sphere}, consider the point $x \in M_1$ which is the furthest from $\gamma$
and the family  of curves $\beta_r$ of radial distance $r$ from $x$ which foliate our surface $M$.  Let $\sigma = 
\dist_g (x,\gamma) = \dist_g(x,p)$, for some point $p\in \gamma$.  Let $\bar{\gamma}$ be the curve such that $p\in \bar{\gamma}$ and such that for all  $y\in \bar{\gamma}$ we have  $\dist_g(p,y) = \sigma$. As in the proof of 
Theorem \ref{prop-sphere} we have $L_{g}(\bar{\gamma}) \le C$, 
$$\dist_g(y,\bar{\gamma}) \le \frac{C}{\sigma}$$ for all $y\in \gamma$ and $\sigma$ is comparable to $|t|$. These all together,  combined with the fact that   $\gamma\cap B_{\mathcal{R}/2}(O) = \emptyset$,  imply that $\bar{\gamma}\cap B_{\mathcal{R}/2}(O) = \emptyset$, for $ |t| \ge |t_0|$ and $|t_0|$ chosen sufficiently large. Let $\beta_{r_1}$ be the curve that contains the point $O$.
Based on the previous analysis, since all the curves in the foliation $\{\beta_r\}_{r\ge 0}$ of our surface $M$ are mutually disjoint, we conclude $r_1 > \sigma$. In the proof of Theorem \ref{prop-sphere} we have argued  that ${L_{g}^r}/{r}$ decreases in $r$. This implies
$$\frac{L_g^r}{r} \le \frac{L_g^{\sigma}}{\sigma} = \frac{L(\bar{\gamma})}{\sigma}$$
finally yielding  the bound 
$$L_g(\beta_{r_1}) \le C\, \frac{r_1}{\sigma} \le \tilde{C}$$
for a uniform constant $\tilde{C}$, since $\sigma$ is comparable to $|t|$ and $r_1 \le \diam(M,g) \le C|t|$.

If $\beta_{r_1} \cap \partial  B_{ {\mathcal R}/4} (O) \neq \emptyset$, then $L_g(\beta_{r_1}) \geq {\mathcal R}/4$ which will lead to a contradiction if we choose ${\mathcal R} > 4 \bar C$. 
Otherwise, $\beta_{r_1} $ is entirely  contained in $B_{ {\mathcal R}/4} (O)$ which means that there exists another curve 
$\beta_{r_2}$,  which encloses  $\beta_{r_1}$ and  is contained in the  closure of  $B_{ {\mathcal R}/4} (O)$ 
and touches the boundary of $B_{ {\mathcal R}/4} (O)$. Since our metric
on $B_{\mathcal R}(O)$ is very close to the Euclidean metric this would imply that $L_g(\beta_{r_2}) > {\mathcal R}/8$ which would
also lead to a contradiction if we choose ${\mathcal R} > 8\bar C$.  This finishes the proof of the lemma.

\end{proof}



\end{document}